\theoremstyle{definition}
\newtheorem{thm}{Theorem}[section]
\newtheorem{cor}[thm]{Corollary}
\newtheorem{conj}[thm]{Conjecture}
\newtheorem{lem}[thm]{Lemma}
\newtheorem{rem}[thm]{Remark}
\newtheorem{prop}[thm]{Proposition}
\newtheorem{defn}[thm]{Definition}
\newtheorem{example}[thm]{Example}
\numberwithin{equation}{section}
\def\ot{\otimes}
\def\Z{{\mathbbm Z}}
\def\F{{\mathbbm F}}
\def\1{{\mathbbm{1}}}
\newcommand{\Hom}{{\rm Hom}}
\renewcommand{\to}{\rightarrow}
\newcommand{\co}{\colon}
\newcommand{\id}{{\rm id}}
\newcommand{\pa}{\partial}
\newcommand{\End}{{\rm End}}
\def\shuffle{\,\raise 1pt\hbox{$\scriptscriptstyle\cup{\mskip
               -4mu}\cup$}\,}
\newcommand{\refequal}[1]{\xy {\ar@{=}^{#1}
(-1,0)*{};(1,0)*{}};
\endxy}
\def\Uc{\mathcal{U}}
\def\Pc{\mathcal{P}}
\def\OM{\mathbb{O}}
\def\om{\varpi}
\newcommand{\WC}{\mathcal{W}}
\newcommand{\barg}{\bar{g}}
\newcommand{\bark}{\bar{\kappa}}
\newcommand{\bard}{\bar{d}}
\newcommand{\ig}[2]{\vcenter{\xy (0,0)*{\includegraphics[scale=#1]{fig/#2}} \endxy}}
\newcommand{\lineblue}{\ig{.5}{lineblue}}
\newcommand{\linegreen}{\ig{.5}{linegreen}}
\newcommand{\startdotblue}{\ig{.5}{startdotblue}}
\newcommand{\finaldotblue}{\ig{.5}{enddotblue}}
\newcommand{\finaldotgreen}{\ig{.5}{enddotgreen}}
\newcommand{\barbblue}{\ig{.5}{barbblue}}
\newcommand{\brokenblue}{\ig{.5}{brokenblue}}
\newcommand{\splitblue}{\ig{.5}{splitblue}}
\newcommand{\mergeblue}{\ig{.5}{mergeblue}}
\newcommand{\capblue}{\ig{.5}{capblue}}
\newcommand{\cupblue}{\ig{.5}{cupblue}}
\newcommand{\Xbg}{\ig{.5}{Xbg}}
\newcommand{\Xgb}{\ig{.5}{Xgb}}
\newcommand{\sixvalent}{\ig{.5}{6valent}}
\newcommand{\brokeXII}{\ig{.5}{6broke0}}
\newcommand{\brokeII}{\ig{.5}{6broke2}}
\newcommand{\brokeIV}{\ig{.5}{6broke4}}
\newcommand{\brokeVI}{\ig{.5}{6broke6}}
\newcommand{\brokeVIII}{\ig{.5}{6broke8}}
\newcommand{\brokeX}{\ig{.5}{6broke10}}
\newcommand{\brokeboth}{\ig{.5}{6brokeboth}}
\newcommand{\cupcapthing}{\ig{.5}{cupcapthing}}
\newcommand{\poly}[1]{{
\labellist
\small\hair 2pt
 \pinlabel {$#1$} [ ] at 7 15
\endlabellist
\centering
\ig{.5}{space}
}}
\newcommand{\longpoly}[1]{
\ig{.5}{space} \ig{.5}{space} {
\labellist
\small\hair 2pt
 \pinlabel {$#1$} [ ] at 7 15
\endlabellist
\centering
\ig{.5}{space}
} \ig{.5}{space} \ig{.5}{space} }
\newcommand{\tallpoly}[1]{{
\labellist
\small\hair 2pt
 \pinlabel {$#1$} [ ] at 20 20
\endlabellist
\centering
\ig{.5}{tallspace}
}}
\newcommand{\otherpoly}[1]{{
\labellist
\small\hair 2pt
 \pinlabel {$#1$} [ ] at 7 15
\endlabellist
\centering
\ig{1}{space}
}}
\newcommand{\splitpoly}[3]{{
\labellist
\tiny\hair 2pt
 \pinlabel {$#1$} [ ] at 5 14
 \pinlabel {$#2$} [ ] at 33 14
 \pinlabel {$#3$} [ ] at 19 35
\endlabellist
\centering
\ig{1}{splitpoly}
}}
\newcommand{\mergepoly}[3]{{
\labellist
\tiny\hair 2pt
 \pinlabel {$#1$} [ ] at 5 30
 \pinlabel {$#2$} [ ] at 33 30
 \pinlabel {$#3$} [ ] at 19 8
\endlabellist
\centering
\ig{1}{mergepoly}
}}
\newcommand{\Xbgpoly}[4]{{
\labellist
\tiny\hair 2pt
 \pinlabel {$#4$} [ ] at 2 21
 \pinlabel {$#2$} [ ] at 38 21
 \pinlabel {$#1$} [ ] at 20 35
 \pinlabel {$#3$} [ ] at 20 8
\endlabellist
\centering
\ig{1}{Xbgpoly}
}}
\newcommand{\Xgbpoly}[4]{{
\labellist
\tiny\hair 2pt
 \pinlabel {$#4$} [ ] at 2 21
 \pinlabel {$#2$} [ ] at 38 21
 \pinlabel {$#1$} [ ] at 20 35
 \pinlabel {$#3$} [ ] at 20 8
\endlabellist
\centering
\ig{1}{Xgbpoly}
}}
\newcommand{\needlepoly}[1]{{
\labellist
\tiny\hair 2pt
 \pinlabel {$#1$} [ ] at 20 20
\endlabellist
\centering
\ig{1}{needlepoly}
}}
\newcommand{\linebluelabel}[1]{{\color{olive}{
\labellist
\small\hair 2pt
 \pinlabel {$#1$} [ ] at 7 15
\endlabellist
\centering
\ig{1}{lineblue}
}}}
\newcommand{\sixvalentlabel}[8]{{\color{olive}{
\labellist
\small\hair 2pt
 \pinlabel {$#1$} [ ] at 32 41
 \pinlabel {$#2$} [ ] at 40 38
 \pinlabel {$#3$} [ ] at 40 12
 \pinlabel {$#4$} [ ] at 32 9
 \pinlabel {$#5$} [ ] at 22 12
 \pinlabel {$#6$} [ ] at 22 38
 \pinlabel {$#7$} [ ] at 55 24
 \pinlabel {$#8$} [ ] at 7 24
\endlabellist
\centering
\ig{1}{big6valent}
}}}
\newcommand{\othersixvalentlabel}[8]{{\color{olive}{
\labellist
\small\hair 2pt
 \pinlabel {$#1$} [ ] at 32 41
 \pinlabel {$#2$} [ ] at 40 38
 \pinlabel {$#3$} [ ] at 40 12
 \pinlabel {$#4$} [ ] at 32 9
 \pinlabel {$#5$} [ ] at 22 12
 \pinlabel {$#6$} [ ] at 22 38
 \pinlabel {$#7$} [ ] at 55 24
 \pinlabel {$#8$} [ ] at 7 24
\endlabellist
\centering
\ig{1}{otherbig6valent}
}}}
\newcommand{\mergelabel}[5]{{\color{olive}{
\labellist
\small\hair 2pt
 \pinlabel {$#1$} [ ] at 32 41
 \pinlabel {$#2$} [ ] at 40 12
 \pinlabel {$#3$} [ ] at 22 12
 \pinlabel {$#4$} [ ] at 55 24
 \pinlabel {$#5$} [ ] at 7 24
\endlabellist
\centering
\ig{1}{big3valent}
}}}
\title{Categorifying Hecke algebras at prime roots of unity, part I}
\author{Ben Elias, You Qi}
\date{\today}
\begin{document}
%
% ==============================================================================

\maketitle

\begin{abstract} We equip the type $A$ diagrammatic Hecke category with a special derivation, so that after specialization to characteristic $p$ it becomes a $p$-dg category. We
prove that the defining relations of the Hecke algebra are satisfied in the $p$-dg Grothendieck group. We conjecture that the $p$-dg Grothendieck group is isomorphic to the
Iwahori-Hecke algebra, equipping it with a basis which may differ from both the Kazhdan-Lusztig basis and the $p$-canonical basis. More precise conjectures will be found in the
sequel.

Here are some other results contained in this paper. We provide an incomplete proof of the classification of all degree $+2$ derivations on the diagrammatic Hecke category, and a
complete proof of the classification of those derivations for which the defining relations of the Hecke algebra are satisfied in the $p$-dg Grothendieck group. In particular, our
special derivation is unique up to duality and equivalence. We prove that no such derivation exists in simply-laced types outside of finite and affine type $A$. We also examine a particular Bott-Samelson bimodule in type $A_7$, which is indecomposable in characteristic $2$ but decomposable in all other characteristics. We prove that this Bott-Samelson bimodule admits no nontrivial fantastic filtrations in any characteristic, which is the analogue in the $p$-dg setting of being indecomposable. \end{abstract}

\setcounter{tocdepth}{2} \tableofcontents

%%%%%%%%%%%%%
\section{Introduction}
%%%%%%%%%%%%%

In order to understand what we do in this paper and why, it helps to know a bit about categorification at a root of unity, and its most popular techniques.

%============
\subsection{Categorification at a root of unity}
%============

\begin{defn} 
Let $A$ be an algebra over a ground ring $\Bbbk$. A $\Bbbk$-linear map $d \co A \to A$ is called an \emph{even differential}\footnote{A differential graded algebra has a degree one map called the differential which
satisfies the super Leibniz rule. For an even degree map, there is no difference between the ordinary and the super Leibniz rule.} or a \emph{derivation} if it satisfies the Leibniz rule
\begin{equation} 
d(fg) = d(f)g + f d(g) 
\end{equation}
for any $f,g \in A$.
\end{defn}

In his seminal paper \cite{KhoHopf}, Khovanov began the program of ``categorification at a root of unity.'' He defined a \emph{$p$-dg algebra} to be a graded algebra $A$ over a
field $\Bbbk$ of characteristic $p$, equipped with an even differential $d$ which is homogeneous of degree $2$, satisfying $d^p = 0$. A $p$-dg category is defined similarly. Khovanov defined the derived category of a $p$-dg category, and observed that its Grothendieck group is naturally a
module over the $p$-th cyclotomic integers. More precisely, it is a module over $\OM_p$, which is the extension of $\Z$ by a variable $\zeta$ for which $\zeta^2$ is a primitive
$p$-th root of unity. To categorify something like the quantum group at a root of unity, one should hunt for an interesting $p$-dg category.

The work of Khovanov-Qi \cite{KQ} successfully applied this idea to the categorification of quantum groups. They took the quiver Hecke algebra or KLR algebra
\cite{KhoLau11,Rouq2KM-pp} in simply-laced type, which was known to categorify the positive half of the quantum group at generic $q$, and equipped it with a degree $2$ derivation
$d$. After specialization to characteristic $p$, it becomes a $p$-dg category. They conjectured \cite[Conjecture 4.18]{KQ} that this $p$-dg quiver Hecke algebra categorifies the
positive half of the small quantum group at $q = \zeta$. They were able to prove this conjecture for $\mathfrak{sl}_2$, and to prove in general that the defining relations of the
small quantum group hold in the Grothendieck group \cite[Theorem 3.35, Theorem 4.14]{KQ}. The conjecture has been proven by Andrew Stephens for small quantum $\mathfrak{sl}_3$
\cite{StephensSL3}.

Before continuing, let us note a common theme to the construction of $p$-dg algebras: the differential is independent of the prime $p$. Typically there is just one differential defined on an integral form of the algebra, which satisfies $d^p = 0$ only after specialization to characteristic $p$. Here is why.

\begin{defn} An \emph{even differential graded algebra}, or simply \emph{edg-algebra}, $(A,d)$ is a graded $\Z$-algebra $A$ equipped with an even differential of degree $2$. An edg-algebra is called a \emph{gaea} (short for $\mathbb{G}_a$-equivariant algebra, or a metaphor for a ``global'' $p$-dg algebra) if the operator $d^{(k)} := \frac{d^k}{k!}$ can be defined over $\Z$. \end{defn}

A gaea specializes to a $p$-dg algebra after changing base from $\Z$ to any field of characteristic $p$, because \[d^p = p! d^{(p)} = 0.\]

\begin{example} \label{ex:standarddiffpoly} Let $R = \Z[x_1, \ldots, x_n]$ be a polynomial ring, graded such that $\deg(x_i) = 2$, and consider the operator 
\begin{equation} d = \sum_{i=1}^n x_i^2 \frac{\pa}{\pa_{x_i}}. \end{equation}
This is the even differential determined by the Leibniz rule and the equation
\begin{equation} d(x_i) = x_i^2 \end{equation}
for all $i$. We call this the \emph{standard differential} on the polynomial ring. The operator $d^{(k)}$ sends $x_i^\ell$ to $\binom{\ell+k-1}{k} x_i^{\ell+k}$, and is well-defined over $\Z$. One can prove that (after specialization to characteristic $p$) this $p$-dg algebra is quasi-isomorphic to the ground field $\Bbbk$, so that its $p$-dg Grothendieck group is just $\OM_p$. \end{example}

In this new terminology, Khovanov-Qi equipped the quiver Hecke algebra with the structure of a gaea. This illustrates the principle that, if you already have an additive categorification of an algebra
at generic $q$, you should try to equip it with a differential (extending to a gaea) in order to categorify the specialization to any prime root of unity. As further illustration
of this principle, we categorified the entire quantum group of $\mathfrak{sl}_2$ in \cite{EQpDGsmall,EQpDGbig}, equipping Lauda's category $\Uc$ and Khovanov-Lauda-Mackaay-Stosic's
category $\dot{\Uc}$ (see \cite{LauSL2} and \cite{KLMS} respectively) with such a differential.

In this paper, we equip the diagrammatic Hecke category in finite and affine type $A$ with the structure of a gaea, hoping to categorify the Iwahori-Hecke algebra at a root of
unity. We also prove a negative result in any other simply-laced type.

Let us note that the differential $d$ on the KLR algebra in type $A_1$ is, for all effective purposes, induced by the standard differential on the
polynomial ring discussed in Example \ref{ex:standarddiffpoly}. What is more interesting is that there is a large family of possible differentials on the KLR algebra, but that
only the standard differential (and its dual) gives rise to a $p$-dg algebra with the correct $p$-dg Grothendieck group! The same seems to be true of the diagrammatic Hecke
category.

To reiterate, constructing a differential on the Hecke category is not the hard part, and it is only the first step. The real difficulty lies in computing the $p$-dg Grothendieck
group. Let us explain some of the technology involved in understanding $p$-dg Grothendieck groups.

%============
\subsection{Fantastic filtrations}
%============

In an additive category, suppose we have a direct sum decomposition
\[ X \cong \bigoplus_{j \in I} M_j \] for some finite index set $I$. This produces a relation on the (split) Grothendieck group 
\begin{equation} \label{desired} [X] = \sum [M_j]. \end{equation}
If \eqref{desired} is a relation you want then you should prove it by constructing a direct sum decomposition. In practical terms, to prove that $X \cong \bigoplus M_j$, one must construct projection maps
\[ p_j \co X \to M_j \]
and inclusion maps
\[ i_j \co M_j \to X \]
which satisfy the basic axioms
\begin{subequations} \label{directsumdecomp}
\begin{equation} \label{pMiMidM} p_j i_j = \id_{M_j}, \end{equation}
\begin{equation} p_j i_k = 0 \quad \text{ if } j \ne k, \end{equation}
\begin{equation} \id_X = \sum_{j \in I} i_j p_j. \end{equation}
\end{subequations}

Now consider the exact same scenario, but in a $p$-dg category. To make a long story short, \eqref{directsumdecomp} is not enough information to deduce that \eqref{desired} holds in
the $p$-dg Grothendieck group. The object $X$ is not actually a direct sum of the objects $M_j$ because the differential need not preserve the summands. Let us say this more precisely.
The functor $\Hom(X,-)$ is (by definition) a representable left module over the category, where each piece $\Hom(X,Y)$ is equipped with a differential. Let $e_j = i_j p_j \in \End(X)$.
Then the direct summand $\Hom(X,-) e_j \subset \Hom(X,-)$ need not be preserved by the differential on Hom spaces. It is not even desirable for $\Hom(X,-) e_j$ to be preserved by the differential, as this condition often fails in practice.

However, one might hope that the differential acts on the pieces of this decomposition in an ``upper-triangular'' fashion, which would equip $X$ with a filtration whose
subquotients might agree with the $M_j$. This idea is codified in \cite[\S 5]{EQpDGsmall} in the notion of an \emph{Fc-filtration}. This is shorthand for either a \emph{fantastic filtration} or a \emph{finite cell filtration}, your choice. A direct sum decomposition as in \eqref{directsumdecomp} is said to \emph{lift to a Fc-filtration} if there exists a partial order on the
index set $I$ satisfying \begin{equation} \label{fcfilt} p_j d(i_k) = 0 \quad \text{ whenever } j \le k. \end{equation} If this can be done, then \eqref{desired} still holds in the
$p$-dg Grothendieck group.

The reader new to this theory should think of \eqref{fcfilt} as consisting of two separate statements. The first statement, $p_j d(i_k) = 0$ whenever $j < k$, implies that $\Hom(X,-)$ is filtered with subquotients $\Hom(X,-) e_j$. The second statement, $p_j d(i_k) = 0$ when $j = k$, implies that $\Hom(X,-) e_j$ and $\Hom(M_j,-)$ are isomorphic as $p$-dg modules (i.e. the natural isomorphism between these functors intertwines the differential). Since $\Hom(M_j,-)$ is representable, this implies that $\Hom(X,-) e_j$ is \emph{cofibrant} and \emph{compact}, which is needed for it to have a symbol in the Grothendieck group in the first place.

\begin{rem} A simpler idea is that of a \emph{dg-filtration} on an object $X$, which is a complete collection of orthogonal idempotents $e_j$ satisfying $e_j d(e_k) = 0$ for $j <
k$. This equips $\Hom(X,-)$ with a filtration by $p$-dg submodules which are additive summands. The additional data in an Fc-filtration goes one step further and proves that the subquotients
$\Hom(X,-) e_j$ are isomorphic to some known representable modules $\Hom(M_j,-)$. For a dg-filtration, it is not obvious that the subquotients $\Hom(X,-)e_j$ will be cofibrant. \end{rem}

It is not really important which partial order on $I$ gives rise to \eqref{fcfilt}, only that some partial order should exist. Practically speaking, the method for determining if a
direct sum decomposition lifts to a Fc-filtration is as follows. Construct an oriented graph $\Gamma_{I,d}$ with vertex set $I$, having an edge from $k$ to $j$ labeled by the degree
$+2$ morphism $p_j d(i_k)$. Erase all edges with the zero label. If $\Gamma_{I,d}$ has no oriented cycles (and in particular, no loops) then it is possible to find a partial order on
$I$ satisfying \eqref{fcfilt}, and consequently \eqref{desired} holds.

Direct sum decompositions are somewhat fluid: there are usually many valid choices for the projection maps $p=\{p_j\}$ and inclusion maps $i = \{i_j\}$. Note that the graph under study
depends on both the differential $d$ and the choice of projection and inclusion maps, so we could denote it by $\Gamma_{I,d,p,i}$ to emphasize this point. What is rather amazing is
that the Fc-filtration requirement is incredibly good at rigidifying the situation: while there may be large families of differentials $d$ on a category, and large families of
projection and inclusion maps, there is often a unique (up to symmetry) triple $(d,p,i)$ such that $\Gamma_{I,d,p,i}$ has no cycles! If not unique, it is often severely restrictive.

\begin{rem} \label{rmk:rescalingnoproblem} Note that rescaling the inclusion and projection maps (e.g. multiplying $p_j$ an invertible scalar $\kappa$, and multiplying $i_j$ by
$\kappa^{-1}$, for some $j$) will not change the graph, it will only rescale the edge labels. This is one symmetry we use freely below. \end{rem}

To illustrate this, let us return to the setting of categorified quantum groups. The quantum group has certain defining relations, such as $ef 1_{\lambda} = fe
1_{\lambda} + [\lambda] 1_{\lambda}$, which are usually categorified in $\dot{\Uc}$ by direct sum decompositions (for which Lauda wrote down some inclusion and projection maps
explicitly). Let us call these the \emph{defining direct sum decompositions} in $\dot{\Uc}$. If a $p$-differential $d$ on $\dot{\Uc}$ is to give rise to the correct Grothendieck group,
then at the least it should be the case that $\Gamma_{I,d}$ has no cycles for all the defining direct sum decompositions (for some choice of projection and inclusion maps). Let us temporarily call such a differential \emph{good}.

In \cite{EQpDGsmall,EQpDGbig} we first computed all the possible $p$-differentials on the category $\Uc$ and $\dot{\Uc}$. This produced a rather large family of differentials. Then
in \cite[Proposition 5.14]{EQpDGsmall} we computed which differentials in this family were good. That is, for each differential we computed the graphs $\Gamma_{I,d}$ for all the
defining direct sum decompositions (and all possible inclusion and projection maps), and asked which graphs had no cycles. Happily, being good is a strong enough condition to pin
down the differential and the inclusion and projection maps precisely! Ultimately, only two nonzero differentials $d$ and $\bar{d}$ were good, and these differentials are
intertwined by the duality functor (which flips diagrams upside-down). Thus already one deduces that there are only two dual $p$-differentials $d$ and $\bar{d}$ which could
possibly give rise to the correct Grothendieck group (caveat: see Remark \ref{rmk:caveat2}), though one still needs to confirm that they do have the correct Grothendieck group. 

In summary, that the defining direct sum decompositions lift to Fc-filtrations is typically a very restrictive property for a differential, and differentials which satisfy it are
quite special and interesting. Not only that, but the projection and inclusion maps compatible with these differentials should be considered as particularly nice.

\begin{rem} Perhaps most intriguingly, the special partial orders on the index sets $I$ induced by the cycle-less graph $\Gamma_{I,d}$ are new and unfamiliar structures which were
invisible before the introduction of the differential. To state a rough moral: categorification replaces structure coefficients (numbers) with multiplicity spaces (vector spaces),
while $p$-dg categorification equips these multiplicity spaces with a filtration, whose shadow is now combinatorial (an oriented graph). \end{rem}

\begin{rem} Essentially everything discussed in this section depended only on the differential as defined over $\Z$, but not on the choice of prime $p$ or on the fact that $d^p = 0$. This is not obvious, as specializing to characteristic $p$ could, in theory, eliminate cycles in the graph $\Gamma_{I,d}$, but in practice it does not. In other words, fantastic filtrations should really be considered as a theory intrinsic to gaeas. At the current moment, the homological algebra of gaeas has not been developed to the same degree as $p$-dg algebras were in \cite{QiHopf}, so whenever discussing the Grothendieck group we play it safe and talk only about $p$-dg algebras. \end{rem}

%============
\subsection{Computing the Grothendieck group}
%============

A useful tool towards computing the Grothendieck group has been the following result of Qi \cite{QiHopf}, an analogue of the positive dg algebra case by \cite{SchPos}.

\begin{prop} \label{prop:positive} In the special case when $A$ is a positively graded $p$-dg algebra\footnote{A positively-graded $p$-dg algebra has its grading supported in
non-negative degrees, with semisimple degree zero part, and the differential is trivial in degree zero.}, the $p$-dg Grothendieck group is just the
specialization at $q=\zeta$ of the original Grothendieck group. \end{prop}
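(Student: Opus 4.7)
The plan is to adapt Schn\"urer's argument for positively graded ordinary dg algebras to the $p$-dg setting, exploiting the three structural hypotheses: positive grading of $A$, semisimplicity of $A_0$, and vanishing of the differential in degree zero. The strategy is to show that every compact cofibrant $p$-dg $A$-module admits a Fantastic-filtration whose subquotients are graded shifts of the indecomposable graded projectives $P_i = Ae_i$ indexed by primitive idempotents $e_i \in A_0$, each equipped with the zero differential. Once this is established, both the $p$-dg Grothendieck group and the cyclotomic specialization of the classical Grothendieck group are generated over $\OM_p$ by the same set of classes, and the argument reduces to matching the relations.

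First I would identify compact cofibrant $p$-dg $A$-modules with finitely generated graded projective $A$-modules carrying a compatible degree-$+2$ differential $d_M$ satisfying $d_M^p = 0$. This identification is routine in the positively graded setting, where $A$ itself is cofibrant as a module over itself and every finitely generated graded projective is a summand of a finite direct sum of shifts of $A$. Forgetting $d_M$, such an $M$ decomposes as a finite direct sum of shifts of the $P_i$, with associated inclusions and projections.

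The core of the argument is to show that this decomposition lifts to an Fc-filtration. Because $d_M$ has internal degree $+2$ and $A$ is positively graded with $d|_{A_0} = 0$, the component of $d_M$ between the summand $P_i$ shifted by $k$ and the summand $P_j$ shifted by $\ell$ must vanish whenever $\ell \le k$: positivity forces $d_M$ to weakly raise the internal grading of generators, and the vanishing of $d$ in degree zero rules out diagonal contributions between summands of the same internal degree. Ordering the index set by decreasing $k$ therefore makes the associated graph $\Gamma_{I,d_M}$ acyclic, yielding the identity $[M] = \sum m_{i,k} q^k [P_i]$ in $K_0^{p\text{-dg}}(A)$, where $q$ acts by grading shift. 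Under our hypotheses $K_0^{gr}(A)$ is free over $\Z[q,q^{-1}]$ on the classes $[P_i]$, so the comparison map $K_0^{gr}(A) \otimes_{\Z[q,q^{-1}]} \OM_p \to K_0^{p\text{-dg}}(A)$ is surjective; injectivity then reduces to linear independence of the $[P_i]$ over $\OM_p$, established via pairing with simple $p$-dg modules.

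I expect the principal obstacle to be ruling out the diagonal contributions when constructing the Fc-filtration above. The hypothesis $d|_{A_0} = 0$ is indispensable here: without it, the differential could induce a nonzero degree-$+2$ endomorphism of some shifted $P_i$, creating a loop that destroys the Fc-filtration. A secondary subtlety is the final linear independence check for the $[P_i]$ over $\OM_p$, which is cleanest via a graded trace or Euler-characteristic pairing, using semisimplicity of $A_0$ to extract the relevant characters.
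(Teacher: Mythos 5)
First, a point of comparison: the paper does not actually prove this proposition. It is imported from Qi's \emph{Hopfological algebra} paper \cite{QiHopf} (as an analogue of Schn\"urer's positive dg case), so there is no in-text argument to measure you against; I can only judge your proposal against the known proof. On its merits, your proposal has a genuine gap at its central step: the claim that positivity together with $d|_{A_0}=0$ forces the matrix of $d_M$, taken with respect to a graded decomposition $M\cong\bigoplus_j P_{i_j}\langle k_j\rangle$, to be strictly triangular in the shifts. Count degrees: the component of $d_M$ from $P_i\langle k\rangle$ to $P_j\langle \ell\rangle$ is a degree $+2$ map of graded projectives, hence is given by right multiplication by an element of $A_{2+k-\ell}$ (up to shift conventions). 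Positivity kills this only when $2+k-\ell<0$. In particular the \emph{diagonal} entries ($\ell=k$) are elements of $A_2$, not of $A_0$, so the hypothesis that $d$ vanishes in degree zero says nothing about them; and the entries with $\ell\le k$ lie in $A_{\ge 2}$ and need not vanish either. Concretely, over the positive $p$-dg algebra $A=\Bbbk[x]/(x^p)$ with $\deg x=2$ and $d(x)=x^2$, the rank-one graded free module $Af$ with $d(f)=xf$ satisfies $d^p=0$ and carries a nonzero loop $p\,d(i)\ne 0$; a rank-two example with $d(e_1)=xe_2$, $d(e_2)=xe_1$ has a genuine two-cycle that no change of idempotents removes (the degree $+2$ part of the "connection matrix'' is semisimple, not nilpotent). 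So ordering the summands by shift does not make $\Gamma_{I,d_M}$ acyclic, and the statement you are trying to prove in step 2 is false as stated: a finitely generated graded projective with a compatible $p$-differential need not be an iterated extension of the $P_i$ with zero differential.

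This is not a cosmetic issue --- it is where the entire content of the theorem lives. Your step 1 is also too quick in both directions: compact cofibrant objects are by definition retracts, in the derived category, of finite cell modules, and the modules exhibited above show that not every graded projective with a $p$-differential is cofibrant, while conversely controlling the retracts (the difference between finite cell modules and their summands) is exactly the hard part that positivity is needed for. The role of $d|_{A_0}=0$ is different from the one you assign it: it guarantees that the primitive idempotents $e_i\in A_0$ are $d$-closed, so that $A=\bigoplus Ae_i$ is already a decomposition of $p$-dg modules and $A_0=A/A_{\ge 1}$ is a $p$-dg quotient with zero differential. The known proof exploits this by comparing $\mathcal{D}(A)$ with $\mathcal{D}(A_0)$ (whose Grothendieck group one computes by hand, since $A_0$ is semisimple with trivial differential) and constructing an explicit inverse to the comparison map on $K_0$, rather than by directly filtering an arbitrary compact object. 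Your surjectivity and injectivity endgame is reasonable in outline, but it rests entirely on the filtration claim that fails.
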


When $A$ is not positively graded this result is often false, and great caution is required. Most interesting categorifications are not positively graded. In \cite{EQpDGsmall} we
developed Fc-filtrations as part of a game which manipulates a $p$-dg category until, hopefully, we can apply Proposition \ref{prop:positive}. Let us elaborate on this method in the context of $\dot{\Uc}$, which categorifies the quantum group $U_q(\mathfrak{sl}_2)$.

A common technique in the study of additive and abelian categories is to choose a projective generator and work instead with its endomorphism ring. Given a collection of self-dual
indecomposable objects we can study the full subcategory $\Pc$ in $\dot{\Uc}$ with those objects. Equivalently, letting $P$ be the direct sum of these self-dual indecomposable objects,
we can study the $p$-dg algebra $\End(P)$. Now the size of morphism spaces between objects in $\dot{\Uc}$ is determined by a particular sesquilinear pairing on $U_q(\mathfrak{sl}_2)$,
see \cite[\S 1.1]{LauSL2}. The self-dual indecomposable objects in $\dot{\Uc}$ categorify Lusztig's canonical basis, and the pairing of canonical basis elements has only non-negative
powers of $q$. Consequently, $\Pc$ is a positively-graded category, i.e., $\End(P)$ is a positively-graded algebra, from which one easily computes both the ordinary and the $p$-dg
Grothendieck group of $\Pc$. The task is to relate the $p$-dg category $\dot{\Uc}$ with the $p$-dg category $\Pc$ (the underlying additive categories are Morita equivalent, but the
$p$-dg setting is more subtle).

In the additive setting, $X$ is generated by $\Pc$ if it can be expressed as a direct sum of the objects in $\Pc$. To show that a given set of objects is a generator, we need to find
enough direct sum decompositions. In the $p$-dg world we need more; it is sufficient for $X$ to be filtered by objects in $\Pc$ via fantastic filtrations. That is, one other major
implication of a Fc-filtration as above is that $X$ will lie in the triangulated hull of $\{M_j\}$, inside the $p$-dg derived category.

\begin{rem} \label{rmk:caveat2} At the moment we do not have results on the necessity of Fc-filtrations. That is, in theory $X$ might be in the triangulated hull of $\{M_j\}$ even when
there is no fantastic filtration, or \eqref{desired} might hold, because no technology has been developed to provide an obstruction. However, we know of no examples where this happens.
 \end{rem}

There are other important direct sum decompositions, beyond the defining ones: for example, the idempotent decomposition known as the Stosic formula in \cite[Theorem 5.6]{KLMS}. The remainder of the argument in \cite{EQpDGbig} went as follows.
\begin{itemize} \item Find enough direct sum decompositions to decompose \emph{any} object in the category $\dot{\Uc}$ as a direct sum of objects which are either indecomposable or contractible. (In this case, the defining decompositions and the Stosic formula were sufficient.)
\item Prove that each of these direct sum decompositions lifts to a Fc-filtration, for the good differentials.
\item Let $P$ be the direct sum of all the non-contractible indecomposable objects. Deduce from the above that $P$ generates the $p$-DG derived category of $\dot{\Uc}$, and that $\End(P)$ is positive, so that the $p$-dg Grothendieck group of $\dot{\Uc}$ agrees with the $q = \zeta$ specialization of the ordinary Grothendieck group of $\Pc$. \end{itemize}

This method for computing the $p$-dg Grothendieck group has many obvious limitations. It worked for $\dot{\Uc}(\mathfrak{sl}_2)$ because the category is relatively simple. We
understand completely what all the indecomposable objects are, and we know enough explicit idempotent decompositions to take an arbitrary object and split it into indecomposables.
In his PhD thesis, Andrew Stephens \cite{StephensSL3} was able to extend this same method to categorify the positive half of quantum $\mathfrak{sl}_3$, because the explicit
idempotent decompositions were also constructed previously by Stosic \cite{StosicSL3}. It seems hopeless to extend this method to $\mathfrak{sl}_n$ in general: even the canonical
basis of the positive half of the quantum group is unknown, so there is little hope of understanding the indecomposable objects explicitly. Some new techniques are clearly required
to make progress beyond what is currently known.

% \BE{cite. Btw, current state of affairs: the poor kid lost the tex file when his computer was damaged. He has an earlier draft, with most of the diagrams, and the pdf of the thesis in obnoxious thesis format, so he's planning on trying to crank out a publishable version in the next few months. He has a baby due relatively soon.}

%============
\subsection{The diagrammatic Hecke algebra and its differential}
%============

In this paper we initiate the program to categorify (Iwahori-)Hecke algebras at a root of unity, with preliminary positive results, useful negative results, and extremely
intriguing conjectures.

The Hecke algebra in type $A$ is a deformation of the group algebra of the symmetric group. It is categorified by the monoidal category of Soergel bimodules, which are certain
bimodules over the polynomial ring from Example \ref{ex:standarddiffpoly}. The appropriate integral form of this categorification is the diagrammatic Hecke category, as
introduced by Elias-Khovanov in type $A$ \cite{EKho}. Just as the symmetric group is generated by its simple reflections, the adjacent transpositions $s_i = (i,i+1)$, the
diagrammatic Hecke category is generated by certain objects $B_i = B_{s_i}$. Tensor products of these objects are commonly called \emph{Bott-Samelson} bimodules or objects. The
diagrammatic Hecke category encodes morphisms between Bott-Samelson bimodules as planar diagrams. A basis for these morphism spaces was constructed in \cite{EWGr4sb}, called the
\emph{double leaves basis}.

Following the motif from the parallel world of quantum groups, if we want to categorify the Hecke algebra at a prime root of unity, we should equip the diagrammatic Hecke category
with the structure of a gaea. In this paper, we only examine simply-laced type. As in the outline of \cite{EQpDGbig}, we first compute all possible
differentials on the category, obtaining a large family of even differentials. Then, for each of the defining idempotent decompositions, we compute the corresponding graph and
determine for which differentials the graph has no cycles. We temporarily call such a differential \emph{good}. Once again, this constraint is enough to pin down the differential
precisely: only two dual differentials ($d$ and $\bar{d}$) are good, and could possibly induce the correct $p$-dg Grothendieck group. We explicitly check all possible choices of
projection and inclusion maps for each of the defining idempotent decompositions.

\begin{rem} Some of the defining idempotent decompositions require dg-filtrations rather than Fc-filtrations, because the summands in question are not Bott-Samelson bimodules and
thus not pre-existing objects in the diagrammatic Hecke category. In fact, they require a slight generalization which mixes the concepts of a dg-filtration and a Fc-filtration,
see \S\ref{subsec-mixedFc}. \end{rem}

\begin{rem} A differential on the Hecke category induces a differential on its polynomial ring, which for the Elias-Khovanov version of the diagrammatic category is assumed to be
$\Bbbk[x_1, \ldots, x_n]$ with its standard action of $S_n$. This polynomial ring has a standard differential, where $d(x_i) = x_i^2$. In our classification we do not assume that
this is the differential on the polynomial ring. Instead, we prove that when the differential is good, the $p$-dg polynomial ring is forced to be isomorphic to the standard one.
Moreover, we prove that Hecke category, if constructed instead using the $(n-1)$-dimensional reflection representation of $S_n$, does not admit a good differential! \end{rem}

\begin{rem} There are a number of other compatibilities one might desire from a differential. The first is that it is compatible with the categorical Schur-Weyl duality of
Mackaay-Stosic-Vaz \cite{MSV}, which gives a functor from the Hecke category of $S_n$ to the categorification of quantum $\mathfrak{gl}_n$. The second is that the singular Hecke
2-category, which contains the Hecke category as an endomorphism category, should have a $p$-differential which restricts to our chosen differential. The third is that the thick
calculus of the first author \cite{EThick} should have a $p$-differential which restricts to our chosen differential. All these are satisfied by the good differential; we briefly discuss some of these compatibilities in this paper. \end{rem}

In practice, the classification of differentials is a three-step process. By the Leibniz rule, any differential on an algebra is determined by its action on the generators. A
generator is sent to some morphism of degree $2$ higher, living in a finite-dimensional morphism space, so we can specify this morphism by a number of parameters (its
coefficients in the double leaves basis). Now we impose three constraints on these parameters. The first is that the relations are preserved by the differential; this ensures
that the differential is well-defined. The second is that the divided power $d^{(k)}$ is well-defined over $\Z$;  this need only be checked on the generators, since
\begin{equation} d^{(k)}(fg) = \sum_{i+j = k} d^{(i)}(f) d^{(j)}(g) \end{equation}
by the Leibniz rule. The third
is that the defining idempotent decompositions should have no cycles. Only the first two constraints need to be checked when classifying general differentials rather than good
differentials.

As noted above, our main result is a classification of all good differentials on the Hecke category (up to object-fixing isomorphism): there are only two, $d$ and $\bar{d}$. We
also go most of the way towards classifying the general differential, although we do not quite finish the job. The general differential has many more non-zero parameters than the
good differential. For example, a general differential applied to the 4-valent vertex gives a sum of three diagrams with particular coefficients, while a good differential
applied to the 4-valent vertex is zero. Consequently, it is much easier to check that the good differential satisfies all the relations of the diagrammatic Hecke category than it
is to check the general case. We are able to check every relation in the general case except the most complicated one, the so-called Zamolodchikov relation associated to
parabolic subgroups of type $A_3$. Perhaps it is only laziness which prevents us from finishing this calculation, although it is a surprisingly thorny one. \footnote{ The interested reader is welcome to finish this calculation for us and write an appendix! That said, we are not sure why anyone should
care about the complete family of differentials anyway; the good differentials seem at the moment like the only interesting ones.}

% \begin{rem} It should also be noted that the second constraint, that the differential is a gaea that $d^p = 0$ in characteristic $p$, is quite annoying to check in general as well. We're unsure even of a
% classification of differentials on the polynomial ring which satisfy this property. \BE{QY?} However, this is relatively easy to check for the good differentials. \end{rem}

% \BE{sufficient is ok I guess. I did the calc for $n=2$, $d(x_1) = A x_1^2 + B x_1 x_2 + C x_2^2$ and its $S_2$-invariant, there are a lot of equations, really hard to do generally... if $B=0$ then you need $C=0$ (modulo infinitely many primes), and that's great at least.}

The Hecke category was generalized to all Coxeter groups by Elias-Williamson in \cite{EWGr4sb}. The computations done here also have implications for possible differentials in
other types and for other realizations (see \cite[\S 3.1]{EWGr4sb}). In particular, we prove that there is no nonzero good differential for any realization in any simply-laced
type except for finite and affine type $A$ (our results are sufficient to classify the good differentials in affine type $A$ as well). Work in progress of the first author and
Lars Thorge Jensen is exploring differentials in some non-simply-laced types, using a realization which is central extension of the root realization.

Let us reiterate that the good differential has already been discovered in some sense, in the algebraic context of Soergel bimodules. Good differentials on the nilHecke algebra has
been studied in the work of Beliakova-Cooper \cite{BeCo} and Kitchloo \cite{Kitch}. In a different direction Khovanov and Rozansky in \cite{KRWitt} construct an action of the positive half of the Witt algebra $\WC^+$ on all
Bott-Samelson bimodules and on certain complexes thereof. This led to an action of $\WC^+$ on triply graded knot homology (just as
predecessors Beliakova-Cooper and Kitchloo defined an action of the Steenrod algebra on the characteristic $p$ versions).

The $\WC^+$ action equips Bott-Samelson bimodules with a differential. This differential on the objects (the Bott-Samelson bimodules) can be used in standard fashion to construct a
differential on morphism spaces between objects. A generator of the $\WC^+$ action gives rise to our good differential. Again, the existence of this differential is no surprise at
this point. Our paper has a different set of goals: to prove the key homological properties of $d$ (i.e. it is good), to prove the uniqueness of $d$ (the lack of other good
differentials), and to examine other types, en route to computing the $p$-dg Grothendieck group of the diagrammatic Hecke category.

%============
\subsection{The Grothendieck group?}
%============

Having classified the good differentials, the next step is to compute the $p$-dg Grothendieck group of the Elias-Khovanov category for these differentials. Now it is clear that the methods of \cite{EQpDGbig} will no longer suffice, for several reasons which we now discuss.

Indecomposable objects $B_w$ in the Hecke category (up to isomorphism and grading shift) are in bijection with elements $w$ in the symmetric group $S_n$, and they appear as direct
summands inside $B_{i_1} \ot \cdots \ot B_{i_d}$ whenever $s_{i_1} \cdots s_{i_d}$ is a reduced expression for $w$. Aside from these facts, the indecomposable objects $B_w$ are
extremely mysterious. Unlike the case of quantum $\mathfrak{sl}_2$, the size and structure of the indecomposable object $B_w$ depends on the characteristic of the base field! We write
${}^p B_w$ to indicate the dependence of this object on the characteristic. The smallest example where the size of ${}^p B_w$ depends on $p$ occurs when $p=2$ for 28 elements $w \in
S_8$, but examples become ever more frequent as $n$ grows larger.

In characteristic zero the indecomposable objects ${}^0 B_w$ categorify the Kazhdan-Lusztig basis, by results of Soergel \cite{Soer90}. The size of morphism spaces is determined by a
sesquilinear form on the Hecke algebra, and Kazhdan-Lusztig basis elements pair positively, so the endomorphism ring of $\bigoplus {}^0 B_w$ is positively graded. When $p$ is large
relative to $n$, ${}^p B_w$ will ``agree'' with ${}^0 B_w$ for all $w$, and continue to categorify the Kazhdan-Lusztig basis. Note that, as proven by Williamson \cite{WillCounter}, the
prime $p$ must grow at least exponentially with $n$ for this statement to hold! When $p$ is small relative to $n$, the the endomorphism ring of $\bigoplus {}^p B_w$ need not be
positively graded. This is the first nail in the coffin.

To decompose an arbitrary object in the Hecke category into indecomposables, we would need to be able to find the idempotent projecting to $B_w$ inside a reduced expression, and
would need to be able to decompose $B_w \ot B_s$ for each $w \in S_n$ and each simple reflection $s$. Essentially nothing is known about these idempotents, and it is an incredibly
difficult open problem to study them! Without an explicit idempotent decomposition, there are currently no tools which could prove that some decomposition lifts to an Fc-filtration
or a dg-filtration. This is the second nail in the coffin.

For all these reasons, we are forced to abandon the previous methods in $p$-dg theory and search for something new, which will be the focus of the sequel to this paper.

Aside from the method of \cite{EQpDGbig}, there are only a few other tools in the literature which can be used to compute the $p$-dg Grothendieck group. Recently, the second author
and Sussan \cite{QiSussan3} have developed the notion of a $p$-dg cellular algebra and a $p$-dg quasi-hereditary algebra, and proven that their $p$-dg Grothendieck groups are $q =
\zeta$ specializations of the ordinary Grothendieck group. However, their concepts only apply to cellular algebras over the base field $\Bbbk$, rather than cellular algebras over
other rings. In the lingo, their technology works for quasi-hereditary algebras but not for affine quasi-hereditary algebras in the sense of Kleschchev \cite{KleAHW}. The double
leaves basis is a cellular basis, but with base ring $R$ (the polynomial ring in $n$ variables), which itself has a nontrivial differential. One might hope that the techniques of
Qi-Sussan can be adapted to the more general setting, but this is not an easy adaptation. Still, the cellular structure on the Hecke category is one of the most powerful weapons in
the arsenal, and we expect that any successful approach will use it.

Despite the lack of tools, we still believe the end result.

\begin{conj} \label{conj:main} The diagrammatic Hecke category associated to $S_n$, when equipped with a good differential (either $d$ or $\bar{d}$) and specialized to characteristic $p$, has $p$-dg Grothendieck group isomorphic to the Hecke algebra at the appropriate root of unity. \end{conj}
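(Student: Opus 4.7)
The plan is to prove Conjecture~\ref{conj:main} in two stages: first construct a surjective ring homomorphism from the Hecke algebra at the appropriate root of unity onto the $p$-dg Grothendieck group, then bound the latter from above using the double leaves cellular structure. For the surjection, I would invoke the Fc-filtrations (and mixed dg/Fc-filtrations in the sense of \S\ref{subsec-mixedFc}) for the defining idempotent decompositions of the diagrammatic Hecke category, which are constructed in the main body of this paper, to deduce that the defining relations of the Hecke algebra hold in $K_0^{p\text{-dg}}$. This mirrors the analogous step for quantum groups in \cite{KQ}. Since the generators $B_i$ generate the monoidal category and each $[B_i]$ maps to the corresponding Kazhdan-Lusztig generator, this produces the desired surjection.

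The harder direction is injectivity, i.e., showing that $K_0^{p\text{-dg}}$ is no larger than the Hecke algebra. The natural route is to exploit the double leaves basis of \cite{EWGr4sb}, which equips the diagrammatic Hecke category with a cellular structure indexed by $w \in S_n$ whose cells are free modules over the polynomial ring $R = \Bbbk[x_1, \ldots, x_n]$. Since $R$ with the standard differential is quasi-isomorphic to $\Bbbk$ (Example~\ref{ex:standarddiffpoly}), one would hope that an appropriate $p$-dg refinement of the cellular structure implies that $K_0^{p\text{-dg}}$ is free of rank $|S_n|$ over $\OM_p$, matching the rank of the Hecke algebra. To execute this strategy one would first need to check that the good differential $d$ acts on the double leaves basis in a manner compatible with the Bruhat order on $S_n$, yielding an Fc-filtration of each Bott-Samelson whose subquotients are indexed by $w$ and are free $p$-dg $R$-modules via the standard light-leaf/double-leaf construction.

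The principal obstacle is the lack of an established theory of $p$-dg cellular (or quasi-hereditary) algebras over a nontrivially differentially graded base. The techniques of Qi-Sussan \cite{QiSussan3} apply only when the cellular datum is over the ground field, where positivity (Proposition~\ref{prop:positive}) can be invoked on standard and costandard modules to compute $K_0^{p\text{-dg}}$. Here the standard modules are free over $R$, and one must develop a genuinely affine version of this theory, showing that the standards are cofibrant, have well-defined symbols, and assemble to recover the Hecke algebra after passing to $\OM_p$. In parallel, because the indecomposables ${}^p B_w$ and their explicit idempotents are not known in general, this cellular approach is essentially the only tool available and will require substantial new homological machinery; this is the step I expect to be by far the most difficult, and it is the natural subject of the sequel.
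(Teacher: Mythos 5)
The statement you are addressing is a conjecture, and the paper does not prove it: the authors explicitly write that the methods available ``no longer suffice,'' that they are ``forced to abandon the previous methods in $p$-dg theory and search for something new, which will be the focus of the sequel,'' and they offer only the unwritten exhaustive verification for $n \le 4$ (Theorem \ref{thm:Iwsnocycle}). Your proposal is likewise not a proof; it is a strategy outline, and you candidly flag the decisive step as requiring ``substantial new homological machinery.'' To be clear about where the genuine gap sits: the first half of your plan (the ring homomorphism from the Hecke algebra over $\OM_p$ to $K_0^{p\text{-dg}}$ sending $b_{s_i} \mapsto [B_i]$, well-defined because the defining idempotent decompositions lift to Fc- and mixed dg/Fc-filtrations) is indeed established by the results of this paper, modulo checking that the classes $[B_{i_1} \ot \cdots \ot B_{i_d}]$ span the $p$-dg Grothendieck group. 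The second half --- bounding $K_0^{p\text{-dg}}$ above via a $p$-dg refinement of the double leaves cellular structure over the differentially nontrivial base $R$ --- is precisely the adaptation of Qi--Sussan that the paper says ``is not an easy adaptation,'' and neither you nor the paper supplies it. A proposal whose key lemma is ``develop the affine $p$-dg quasi-hereditary theory'' has not closed the argument.

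One concrete warning about the route you sketch for the upper bound. You propose to check that $d$ acts on double leaves ``in a manner compatible with the Bruhat order on $S_n$.'' The paper's own computations cut against this: the partial orders $\prec$ forced on the index sets $I_{w,s}$ by the cycle-free graphs $\Gamma_{I_{w,s},d}$ are explicitly shown \emph{not} to be restrictions of the Bruhat order (e.g.\ $s \prec sts$ from $I_{st,s}$ but $sts \prec t$ from $I_{ts,t}$), and in $S_4$ the orders from different $I_{w,s}$ are mutually inconsistent, so no global partial order on $S_n$ exists that restricts to all of them. Moreover the $n=8$ example shows that the subquotients of a putative fantastic filtration need not even be indexed by the same set as the characteristic-zero (or characteristic-$p$) decomposition, since the $d$-canonical basis differs from both. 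So any cellular filtration argument must be organized by an order intrinsic to $d$ (the one on double leaves previewed for part II), not by Bruhat order, and the identification of the subquotients is itself part of the open problem.
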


%============
\subsection{Computations for small $n$}
%============

The methods of \cite{EQpDGbig} do suffice for small values of $n$, where we can compute all the idempotent
decompositions by hand, and where the size of the indecomposable objects does not depend on the characteristic. For each $w \in S_n$ and simple reflection $s$ with $ws > w$, the
direct summands of $B_w \ot B_s$ all\footnote{We are working in characteristic zero, or assuming that the object $B_w$ and its direct sum decomposition agrees with the
characteristic zero setting in the Grothendieck group. This holds when $n < 8$ in any characteristic.} have the form $B_z$ for various $z \in S_n$. We let $I_{w,s} \subset S_n$ denote the set
of such $z$ (with multiplicity). If the graph $\Gamma_{I_{w,s},d}$ has no cycles, then the decomposition \begin{equation} \label{BwBsBz} B_w B_s \cong \bigoplus_{z \in I_{w,s}} B_z \end{equation}
is fantastically filtered.

\begin{thm} \label{thm:Iwsnocycle} If $n \le 4$, $w \in S_n$ and $s \in S$, then $\Gamma_{I_{w,s},d}$ has no cycles. As a consequence, any Bott-Samelson splits into indecomposable objects via a fantastic
filtration, and Conjecture \ref{conj:main} holds for $n \le 4$. \end{thm}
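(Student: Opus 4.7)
The plan is to reduce the theorem to a finite check and then invoke the strategy from \cite{EQpDGbig}. First I would enumerate, for each $n \le 4$, every pair $(w,s)$ with $w \in S_n$ and $s \in S$ together with the decomposition $B_w B_s \cong \bigoplus_{z \in I_{w,s}} B_z$. For $ws < w$ this decomposition is the standard $B_w(1) \oplus B_w(-1)$; for $ws > w$ either $I_{w,s} = \{ws\}$, or $I_{w,s} = \{ws\} \cup \{y < w : \mu(y, w) \ne 0, \, ys < y\}$, so the list is finite and explicit. Using the double leaves basis and standard diagrammatic recipes from \cite{EWGr4sb} we can write down explicit projection and inclusion maps $p_z, i_z$ for each summand $B_z$ in each such decomposition.

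Next, for each fixed $(w,s)$, I would form the oriented labeled graph $\Gamma_{I_{w,s},d}$ by computing $p_j \, d(i_k)$ for each ordered pair of summands. Since the good differential $d$ of the previous sections is known explicitly on generators (the $4$-valent vertex, merges, splits, dots, and the polynomial action), each such edge-label lives in a small morphism space and reduces to a manageable polynomial computation. A large fraction of the candidate edges vanish for trivial grading reasons: whenever $\Hom^{+2}(B_z, B_{z'}) = 0$ for some pair $(z,z')$, the corresponding edge is absent, which by itself kills most potential cycles. The remaining edges must be evaluated one by one, and the graph must be shown to be acyclic; some freedom in the choice of $(p_j, i_j)$ (and the rescaling symmetry of Remark \ref{rmk:rescalingnoproblem}) can be used to simplify the surviving labels. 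The main obstacle is the bookkeeping for $n = 4$, where the most intricate cases are the decompositions of $B_w B_s$ for $w$ of length $4$ or $5$ near the longest element $w_0 \in S_4$, as $|I_{w,s}|$ grows and the relevant morphism spaces become larger; here one has to fix enough maps $p_j, i_j$ to trivialize as many edge labels as possible before checking acyclicity.

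Granting that each graph $\Gamma_{I_{w,s},d}$ is acyclic, \eqref{BwBsBz} lifts to a fantastic filtration for every such $(w,s)$. Iterating this at the rightmost tensorand, every Bott-Samelson bimodule admits a fantastic filtration whose subquotients are the indecomposables $\{B_w\}_{w \in S_n}$, proving the first half of the theorem. For the second half, set $P = \bigoplus_{w \in S_n} B_w$ and let $\Pc$ be the full $p$-dg subcategory on summands of $P$. The fantastic filtrations guarantee that $P$ generates the $p$-dg derived category of the Hecke category, so its $p$-dg Grothendieck group coincides with that of $\Pc$. Since $n \le 4$, the indecomposables ${}^p B_w$ agree with the characteristic-zero Soergel bimodules ${}^0 B_w$, which categorify the Kazhdan-Lusztig basis and pair with non-negative powers of $q$; consequently $\End(P)$ is a positively graded $p$-dg algebra, with $d$ vanishing in degree zero. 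Proposition \ref{prop:positive} then identifies its $p$-dg Grothendieck group with the $q = \zeta$ specialization of its ordinary Grothendieck group, which is the Hecke algebra at the $p$-th root of unity, establishing Conjecture \ref{conj:main} for $n \le 4$.
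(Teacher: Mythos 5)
Your proposal is exactly the strategy the paper intends: the authors state that the proof is "by direct, straightforward, and exhaustive computation" and decline to write it up, and the surrounding text (the outline of the method of \cite{EQpDGbig}, the construction of the graphs $\Gamma_{I_{w,s},d}$, and the passage to the positively graded endomorphism ring of $P = \bigoplus B_w$ via Proposition \ref{prop:positive}) matches your argument step for step. The only detail worth flagging is that the summands $B_z$ live in the Karoubi envelope rather than among Bott--Samelson objects, so the filtrations must be handled with the mixed dg/Fc formalism of \S\ref{subsec-mixedFc} (or the thick calculus), but this is already built into the paper's framework.
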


The proof of this theorem is by direct, straightforward, and exhaustive computation, and we have chosen not to write it up. Preliminary calculations have verified that this theorem
continues to hold true for interesting examples with $n = 5$ as well.

Already one observes some mysterious phenomena. The graph $\Gamma_{I_{w,s},d}$, when it has no cycles, induces a special partial order $\prec$ on $I_{w,s}$. In order to prove that
$\Gamma_{I_{w,s},d}$ has no cycles in general, it would help to know in advance what this partial order will be.

For example, let $n=3$ and let $s = (12)$ and $t = (23)$ inside $S_3$. In the additive setting we know that
\[ B_{st} \ot B_s \cong B_{sts} \oplus B_s, \]
so that $d$ induces a partial order on the set $\{s,sts\}$. This partial order happens to be $s \prec sts$. Meanwhile, 
\[ B_{ts} \ot B_t \cong B_{sts} \oplus B_t, \]
with partial order\footnote{In both cases, $\bar{d}$ induces the opposite partial order.} $sts \prec t$. Already it is clear that $\prec$ is not determined by the Bruhat order. One might still pray that it is governed by some kind of lexicographic order or convex order on roots, but one would be disappointed.

Letting $u = (34)$ inside $S_4$, we can compute various other partial orders, including
\begin{subequations}
\begin{equation} us \prec usts \quad \text{from } I_{ust,s}, \end{equation}
\begin{equation} usts \prec stsuts \prec sutu \quad \text{from } I_{sutsu,t}, \end{equation}
\begin{equation} sutu \prec su \quad \text{from } I_{sut,u}. \end{equation}
\end{subequations}
There are no cycles in any of these
individual sets, but there is a cycle if the sets $I_{w,s}$ are all placed togther! In other words, there is no partial order on $S_n$ which restricts to the partial order on each
$I_{w,s}$. This makes the partial orders on $I_{w,s}$ especially mysterious.

\begin{rem} This cycle
\begin{equation} us \to usts \to stsuts \to sutu \to us\end{equation} is not the only one. One can replace $usts$ with $stsu$, or $sutu$ with $utus$, to obtain another cycle. In addition, there is a cycle
\begin{equation} tsu \to sts \to tstut \to tut \to tsu, \end{equation}
and one can replace $tstut$ with either $tutst$ or with $t$. \end{rem}

%============
\subsection{Computations for $n=8$} \label{ssec:intron8}
%============

In finite characteristic for larger values of $n$, as already noted, the decomposition of tensor products into indecomposable objects is different from in characteristic zero. Some
summands which split in characteristic zero will instead get ``glued together.'' This does not change the Grothendieck group itself! The Grothendieck group of the diagrammatic
Hecke category is the Hecke algebra in any characteristic. What it does change is the basis of the Grothendieck group given by the symbols of indecomposable objects. In
characteristic $p$, one defines ${}^p B_w$ as the top summand of the Bott-Samelson object associated to a reduced expression of $w$, i.e. the unique direct summand of this
Bott-Samelson which is not a direct summand of any shorter Bott-Samelson\footnote{One can prove that it is independent (up to isomorphism) of the choice of reduced expression.}.
These ${}^p B_w$ enumerate the isomorphism classes of indecomposable objects up to grading shift. The basis $[{}^0 B_w] = b_w$ is the usual Kazhdan-Lusztig basis, also called the
\emph{$0$-canonical basis}, while $[{}^p B_w] = {}^p b_w$ is now called the \emph{$p$-canonical basis}.

Let us give a concrete example, in the symmetric group $S_8$. Let $X$ denote the Bott-Samelson bimodule associated to the sequence of simple reflections
\[ (s_3, s_2, s_1, s_5, s_4, s_3, s_2, s_6, s_5, s_4, s_3, s_7, s_6, s_5), \]
which is a reduced expression for an element $w \in S_8$. Let $y = s_2 s_3 s_2 s_5 s_6 s_5$, which is the longest element of a parabolic subgroup. In characteristic zero (and any odd finite characteristic), there is a direct sum decomposition\footnote{Thanks to Lars Thorge Jensen for his computer calculations and his help finding this and other accessible examples. This decomposition was verified by his programs.}
\begin{equation} \label{baddecomp} X \cong B_w \oplus B_y. \end{equation}
However, in characteristic $2$, $X$ is indecomposable. What is happening is that the integral form of the Hecke category contains morphisms $p \co X \to B_y$ and $i \co B_y \to X$ such that $p \circ i = 2 \id_{B_y}$. These morphisms $p$ and $i$ span their respective Hom spaces over $\Z$. Now change base to a field. If $2$ is invertible then $\frac{ip}{2}$ is an idempotent projecting to $B_y$. If $2 = 0$ one can not construct any splitting of $i$ or $p$; consequently, $X = {}^2 B_w$ is indecomposable, and on the Grothendieck group, ${}^2 b_w = b_w + b_y$.

In similar fashion, it need not be the case that $\Gamma_{I_{w,s},d}$ has no cycles in order for Conjecture \ref{conj:main} to hold! Let us call an object \emph{$p$-dg
indecomposable} if it does not have any idempotents fitting into a dg-filtration. A refinement of Conjecture \ref{conj:main} might say that a Bott-Samelson associated to a reduced
expression (for $w$) has an Fc-filtration with a unique $p$-dg indecomposable summand ${}^d B_w$ not appearing inside any Fc-filtration of a shorter Bott-Samelson. In theory, these
objects ${}^d B_w$ might descend to a basis of the $p$-dg Grothedieck group, the \emph{$d$-canonical basis}. The $p$-dg Grothendieck group might still be the Hecke algebra, even if
the $d$-canonical basis is an unexpected one. For example, what if the graph for the decomposition \eqref{baddecomp} has cycles? Then $X$ has no idempotents fitting into a
dg-filtration, and $X = {}^d B_w$ is $p$-dg indecomposable. Morally speaking, this is no worse than having $X$ be indecomposable in characteristic $2$.

We did not bring up this specific example for no reason.

\begin{thm} The decomposition \eqref{baddecomp} is not Fc-filtered in any characteristic. If $i$ and $p$ are the morphisms discussed above, then $d(i) \ne 0$, $d(p) \ne 0$, and
$d(p)i \ne 0$ (so the graph has both a cycle and a loop). \end{thm}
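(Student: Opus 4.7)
The plan is to show directly that $d(p)\circ i \neq 0$ in $\End(B_y)$; the other two non-vanishings are automatic consequences. Indeed, since $p\circ i = 2\,\id_{B_y}$ has trivial differential, the Leibniz rule gives $d(p)\circ i = -\,p\circ d(i)$, so $d(p)\circ i \neq 0$ forces both $d(p) \neq 0$ and $d(i) \neq 0$. Moreover, $p\circ d(i) = p_y\, d(i_y)$ is precisely the edge label of the prospective loop at $y$ in the graph $\Gamma_{\{w,y\},d}$, so its non-vanishing immediately obstructs condition \eqref{fcfilt} for any choice of partial order on $\{w,y\}$. By Remark \ref{rmk:rescalingnoproblem}, it suffices to verify this for one particular choice of $(i,p)$.

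Next I would write down explicit diagrammatic representatives for $i$ and $p$. Since $y$ is the longest element of the reducible parabolic of type $A_2 \times A_2$ generated by $\{s_2,s_3,s_5,s_6\}$, one has $B_y \cong B_{s_2 s_3 s_2}\ot B_{s_5 s_6 s_5}$, and both Hom spaces in question are rank one (over the integral form) in the relevant internal degree. The morphisms are determined up to a unit by specifying any diagrammatic representative: one builds $i$ and $p$ out of the trivalent vertices and cups/caps that contract the given reduced expression for $w$ down to the embedded reduced expression for $y$, following the Libedinsky light leaves recipe applied to the specific subword matching. The integer $2$ in $p\circ i = 2\,\id_{B_y}$ then fixes the relative normalization.

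To compute $d(i)$, I would apply the Leibniz rule, expanding $d$ across each generating piece of the diagram. The good differential kills the $4$-valent vertex and acts non-trivially only on dots and trivalent vertices, with explicit formulas determined by the classification established earlier in the paper. The resulting signed sum of diagrams, post-composed with $p$, is then simplified using the relations of the diagrammatic Hecke category together with the parabolic factorization of $B_y$. The task reduces to identifying a single non-vanishing coefficient in a double leaves basis of $\End(B_y)$ in degree $2$ — for example, the coefficient of the idempotent $\id_{B_y}$ multiplied by a specific degree-two polynomial such as an $x_i^2$.

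The main obstacle is the size of the computation: the Bott-Samelson $X$ has fourteen strands and so both $i$ and $d(i)$ are large diagrammatic expressions. Two strategies tame this. First, the parabolic factorization of $B_y$, together with the fact that the good differential is compatible with parabolic subalgebras, forces most terms in the expansion of $p\circ d(i)$ to simplify to zero, leaving only contributions ``supported'' on the six-letter subword; this cuts the sum down dramatically. Second, once the reduction has been performed, the remaining finite check can be done either by hand in the diagrammatic calculus or by computer, as hinted by the acknowledgement to Lars Thorge Jensen's programs. The conceptual content is that the specific reduced expression chosen for $w$ forces the good differential to produce a residue in degree $2$ of $\End(B_y)$ that no rescaling of $i$ and $p$ can eliminate, so the loop at $y$ is intrinsic to the decomposition.
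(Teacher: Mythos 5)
Your overall strategy is the same as ours: fix the explicit integral morphisms $p$ and $i$ spanning their degree-zero Hom spaces, and establish the non-vanishings by direct diagrammatic computation. The Leibniz reduction is correct and is a tidy economization: since $d(\id_{B_y})=0$, one has $d(p)\,i=-p\,d(i)$, so the single non-vanishing $d(p)\,i\ne 0$ implies $d(p)\ne 0$ and $d(i)\ne 0$, and your rescaling observation correctly shows that the loop at $y$ cannot be removed by any other choice of inclusion/projection, since those Hom spaces are rank one.

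That said, the proposal stops where the theorem's content begins — the decisive computation of $p\,d(i)$ is deferred to ``by hand or by computer'' — and two of the guiding claims about how that computation will go are inaccurate in ways that matter. First, the good differential does not act non-trivially ``only on dots and trivalent vertices'' by multiplying by polynomials: the thick splitters and mergers used to build $p$ and $i$ have differentials given by \eqref{subeq:thickcalcdiff}, and one of each orientation \emph{breaks a strand} with coefficient $\pm\kappa$ rather than inserting a polynomial. These broken-strand terms are exactly the contributions that survive in $d(p)\,i$ with coefficient $-1$ (see \S\ref{sec:n8example}); a computation tracking only polynomial coefficients would miss them. Second, the heuristic that only contributions ``supported on the six-letter subword'' survive is contradicted by the answer: $d(p)\,i$ contains the polynomial $x_1+x_3-x_6-x_8$, and the $x_1$ and $x_8$ terms come from the dots killing the $s_1$- and $s_7$-colored strands — these are precisely the terms we use to see directly that $d(p)\ne 0$ and $d(i)\ne 0$, since $x_1$ (resp.\ $x_8$) cannot be produced by any other term in the expansion. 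So the plan is sound in outline, but the simplifications proposed to tame the computation would, taken literally, discard the very terms that make the answer nonzero; the computation has to be carried out in full, as we record in \S\ref{sec:n8example}.
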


The proof is by direct and nasty computation, and we do not write it up. In \S\ref{sec:n8example} we record the conclusion of our efforts for posterity, writing down $p$, $d(p)$,
$i$, $d(i)$, and $d(p)i$.

This theorem is very surprising, because it implies that (if Conjecture \ref{conj:main} is true) the $d$-canonical basis of the Hecke algebra is different from both the
$0$-canonical basis and the $p$-canonical basis! For $S_8$ in characteristic $p>2$, the $p$-canonical basis agrees with the $0$-canonical basis, but the $d$-canonical basis
contains ${}^d b_w = b_w + b_y$.

\begin{rem} Having just emphasized that it is not important that every direct sum decomposition is an Fc-filtration, we wish to re-emphasize that the defining direct sum decompositions (like $B_s B_s \cong B_s(-1) \oplus B_s(1)$) must be Fc-filtrations, or the ring structure on the $p$-dg Grothendieck group would be incorrect. \end{rem}

%============
\subsection{A preview of part II}
%============

As noted above, new techniques are required to compute the $p$-dg Grothendieck group of the Hecke category, and to understand Fc-filtrations in the absence of explicit
decompositions. In the paper \cite{EQsl2}, we introduce some new techniques (some of them conjectural) which we hope will fit the bill. Here is a quick preview.

First, we introduce what we call the \emph{counterdifferential}, a new structure which exists in both the Hecke and the quantum group settings. This is a derivation $z$ of degree
$-2$ (satisfying the Leibniz rule), giving a new gaea structure. Since most of the generating morphisms live in the minimal nonzero degree of their respective Hom spaces, $z$ will kill these generators. So $z$
is determined by what it does to the polynomial ring $R = \Bbbk[x_1, \ldots, x_n]$, where it sends $x_k \mapsto 1$ for each $k$. In the sequel we will prove the following basic structural results.
\begin{itemize} \item Letting $h$ denote the degree operator (which acts on morphisms of degree $k$ by multiplication by the scalar $k$), the triple $(d,h,z)$ acts as an $\mathfrak{sl}_2$-triple, making all Hom spaces into $\mathfrak{sl}_2$-representations.
	\item The triple $(\bard,h,z)$ is also an $\mathfrak{sl}_2$-triple.
	\item Each Hom space is a free $R$-module with its double leaves basis, as noted above. The $R$-span of any double leaf is preserved by $z$. Moreover, there is a partial order on the set of double leaves such that $d$ sends each double leaf to the $R$-span of double leaves which are weakly lower in the partial order. Thus the double leaves basis equips all Hom spaces with a very particular kind of filtration.
\end{itemize}

One should think of the $\mathfrak{sl}_2$-representations which appear in the Hom spaces as roughly being filtered by coVerma modules. After all, the polynomial ring in one
variable $\Bbbk[x]$ is precisely the coVerma module $\nabla(0)$ (the polynomial ring in multiple variables is more complicated). Hom spaces are large infinite-dimensional
modules, but may also have small finite-dimensional submodules. For example, the $\Bbbk$-span of the identity element $1$ inside $R$ is a finite-dimensional $\mathfrak{sl}_2$-subrepresentation. We propose that it is no accident that the (one-dimensional) finite part of $R$ agrees precisely with the span inside $R$ of the units!

To give another example, whenever $s \in S$ is a simple reflection, $\Hom(B_sB_sB_sB_s,B_s)$ is a free $R$-module of graded rank $q(q+q^{-1})^4$. Note that $B_s B_s B_s B_s$ splits
into $8$ shifted copies of $B_s$, with graded rank $(q+q^{-1})^3$, so one should expect to find $8$ projection maps inside $\Hom(B_s B_s B_s B_s, B_s)$. Amazingly, for any
fantastic filtration picking out these $8$ summands, the span of the $8$ projection maps will be an $8$-dimensional $\mathfrak{sl}_2$-subrepresentation of $\Hom(B_s B_s B_s B_s,
B_s)$, and this is the maximal finite-dimensional part of that infinite-dimensional representation.

More generally, we conjecture the following algorithm to find all the projection maps in a fantastic filtration on a Bott-Samelson bimodule $X$. Consider the $\mathfrak{sl}_2$-representation $V = \bigoplus_{w \in W} \Hom(X,B_w)$ of all morphisms to indecomposable objects (there are ways of modeling this representation without needing to understand the
indecomposable objects). Find the maximal finite-dimensional subrepresentation $F_1$; this should be spanned by projection maps. Then, take the quotient of $V$ by $J \cdot F_1$,
where $J$ is the Jacobson radical of the category. Call this quotient $V_1$. Now repeat, finding the maximal finite-dimensional subrepresentation of $V_1$, and so forth. Not only
do we conjecture that sufficiently many projection maps can be found inside these finite-dimensional subquotients, and that this will produce a fantastic filtration on $X$, but also that the existence of a fantastic filtration on $X$ is equivalent to the effectiveness of this procedure. Further details will await in the sequel to \cite{EQsl2}.

\begin{rem} One should think of this conjecture as a new kind of Hodge theory. The relative hard Lefschetz theorem (see \cite[Theorem 1.2]{EWrel} for the theorem in this context)
implies (in characteristic zero) that multiplicity spaces (appropriately defined) of an indecomposable summand in a Bott-Samelson bimodule satisfy the hard Lefschetz property with
respect to the appropriate Lefschetz operator, meaning that they are finite-dimensional $\mathfrak{sl}_2$-representations. Meanwhile, we are stating that the entire Hom space has the
structure of an $\mathfrak{sl}_2$-representation (with a very different raising operator), whose finite-dimensional $\mathfrak{sl}_2$-subrepresentation is related to the multiplicity
space. Note that $d$ is not a Lefschetz operator; it is more like an ``infinitesimal Lefschetz operator.'' Moreover, we also conjecture some positivity properties for $d$, analogous to
the relative Hodge-Riemann bilinear relations. \end{rem}

\begin{rem} The $\mathfrak{sl}_N$ link homologies are defined using categories of $\mathfrak{sl}_N$-foams. The diagrammatic Hecke category for $S_n$ admits a monoidal functor to
$\mathfrak{sl}_N$-foams for each $N$, see e.g. \cite{VazFoams, MackaayVazFoams}. The diagrammatic Hecke category for $S_n$ also has a monoidal ideal for each $2 \le N$. This ideal is zero when
$N > n$, and otherwise it is generated by the indecomposable object $B_{w_N}$, where $w_N$ is the longest element of $S_N$ after its typical embedding into $S_n$. The functor to
$\mathfrak{sl}_N$-foams should annihilate this monoidal ideal, though we are unsure where to find this statement, c.f. \cite[Chapter 22]{EMTW}. When $N=2$, the computations in this paper
imply that the monoidal ideal is preserved by the action of $d$ and $z$, so that the quotient still admits an action of $\mathfrak{sl}_2$ by derivations. We expect the same statement
to hold for any $N$.

Note furthermore that many link homologies (e.g. Khovanov homology) use a further monoidal quotient, where one also kills positive degree symmetric polynomials to obtain finite dimensional morphism spaces. This ideal is preserved by $d$, but not by $z$ in general. However, the ideal is preserved by $z$ in certain finite characteristics. In characteristic 2, the operator $z$ descends to the one used by Shumakovitch in \cite[Section 3]{Shu}, while in characteristic $p > 2$, it agrees with the $p$-nilpotent operator discovered by Wang \cite{JWang}. See also \cite{QRSW} for connections with the foam approach. \end{rem}

\paragraph{Acknowledgements.}

The first author was supported by NSF CAREER grant DMS-1553032, NSF FRG grant DMS-1800498, and by the Sloan Foundation. The second author was supported by the NSF grant
DMS-1947532 when working on this paper. Many thanks go to Lars Thorge Jensen for proofreading a previous version of this manuscript and finding numerous bugs, as well as for useful
conversations, and for his lovely computer programs, which helped us to find good examples to study in $S_8$. We wish to thank the referee for helpful comments.

%%%%%%%%%%%%%
\section{The answer} \label{sec-answer}
%%%%%%%%%%%%%

Let us state the end result of our computations. The rest of the paper will comprise the proof of these results.

We will not review the diagrammatic Hecke category here. See \cite{EWGr4sb} for details. We fix a Coxeter system $(W,S)$ with a realization, having polynomial ring $R$ with an action of
$W$. The degrees in $R$ are doubled, so that the simple roots $\{\alpha_s\}_{s \in S}$ have degree $2$. To avoid potential confusion, we refer to the simple roots and other homogeneous polynomials
of the same degree as \emph{linear polynomials} (rather than degree $2$ polynomials). Since $R$ is the endomorphism ring of the monoidal identity, a differential on the diagrammatic Hecke category induces a differential on $R$. One should not confuse the differential $d$ with the divided difference operators $\pa_s$ associated to each $s \in S$, see \eqref{eq:defdem}. In the pictures below, blue represents $s$, red represents some $t$ with $m_{st} = 3$, and green represents some $u$ with $m_{su} = 2$.

\begin{thm} Let $d$ be an even differential of degree $+2$ on the diagrammatic Hecke category in simply laced type. Then there exist linear polynomials $g_s, \barg_s \in R$ for each $s \in S$, such that the differential is defined on the generating diagrams by the following formulas.
\begin{subequations} \label{equationsfordiff}
\begin{equation}
d\left(~\ig{.75}{enddotblue}~\right) = {
\labellist
\small\hair 2pt
 \pinlabel {$g_s$} [ ] at 7 15
\endlabellist
\centering
\ig{.75}{space}
} \ig{.75}{enddotblue} 
\end{equation}
\begin{equation}
	d\left(~\ig{.75}{startdotblue} ~\right)= {
\labellist
\small\hair 2pt
 \pinlabel {$\barg_s$} [ ] at 7 15
\endlabellist
\centering
\ig{.75}{space}
} \ig{.75}{startdotblue} 
\end{equation}
\begin{equation}
	d\left(~\splitpoly{}{}{}~\right) = \splitpoly{}{}{-g_s}
	\end{equation}
	\begin{equation}
	d\left(~\mergepoly{}{}{}~\right) = \mergepoly{}{}{-\barg_s} \end{equation}
	\begin{equation}
	d\left(~\Xbgpoly{}{}{}{}~\right) = \Xbgpoly{-g_u}{}{}{} + \Xbgpoly{}{}{g_s}{} + \Xbgpoly{}{g_u - g_s}{}{}  
	\end{equation}
	\begin{equation}
    d\left(~\sixvalent~\right) = A \brokeXII + B \brokeVI + C \brokeII + D \brokeIV + \sixvalent \tallpoly{f}.
	\end{equation}
\end{subequations}
In this final formula, we have
\begin{subequations}
\begin{eqnarray} A &=& - \pa_s(g_t), \\
	B &=& -\pa_t(\barg_s), \\
	C &=& \pa_t(\barg_t) - \pa_t(g_s) - \pa_s(g_t), \\
	D &=& \pa_s(g_s) - \pa_s(\barg_t) - \pa_t(\barg_s), \\
	f &=& g_s - g_t - (\pa_s(g_s) + \pa_t(g_s)) \alpha_s + (\pa_t(g_t) + \pa_s(g_t)) \alpha_t. \end{eqnarray}
\end{subequations}
These formulas, together with a differential on $R$, determine the differential on the category. Let
\begin{equation} z_s = g_s + \barg_s \in R. \end{equation}
Then the differential on $R$ satisfies the following properties:
\begin{subequations} \label{propertiesofdiff}
\begin{equation} d(w(f)) = w(d(f)) \quad \text{ for all } w \in W, f \in R, \end{equation}
\begin{equation} d(\alpha_s) = \alpha_s z_s, \end{equation}
\begin{equation} z_s \in R^s \end{equation}
\begin{equation} z_s \in R^u \quad \text{ when } m_{su} = 2, \end{equation}
\begin{equation} s(\alpha_t) \pa_s(z_t) = \pa_s(\alpha_t) (z_s - z_t) \quad \text{ when } m_{st} > 2. \end{equation}
\end{subequations}

To state the converse, let us call the data of an even differential on $R$ and a collection of linear polynomials $g_s, \barg_s$ satisfying \eqref{propertiesofdiff} by the name of a \emph{potential differential}. Then a potential differential induces a differential on the diagrammatic Hecke category via the formulas \eqref{equationsfordiff} if and only if the $A_3$ Zamolodchikov relation is sent to zero. \end{thm}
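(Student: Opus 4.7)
The plan is to check both directions of the classification by a systematic relation-by-relation analysis. A derivation is determined by its values on generators via the Leibniz rule, so the first step is to parameterize $d$ applied to each generator as an unknown element of the target Hom space, expanded in the double leaves basis of \cite{EWGr4sb}. For the one-color generators (dots, merge, split) the unknowns are a handful of linear polynomials, which will turn out to be $g_s$, $\barg_s$, and expressions in them; for the 4-valent vertex, $d$ is written as a linear combination of the three diagrammatic resolutions; for the 6-valent vertex, as a combination of the five independent broken configurations plus an $R$-multiple of the 6-valent vertex itself.

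For the forward implication I would process the defining relations of \cite{EWGr4sb} in order of increasing complexity, extracting constraints on the parameters at each step. The Frobenius, unit, and needle relations collapse the trivalent-vertex parameters to $-g_s$ and $-\barg_s$, link the differential on $R$ to $z_s := g_s + \barg_s$ via $d(\alpha_s) = \alpha_s z_s$, and give $z_s \in R^s$ together with $W$-equivariance of $d$ on $R$. The commuting-color (4-valent) relations fix the three crossing coefficients and force $z_s \in R^u$ when $m_{su}=2$. The $m_{st}=3$ two-color relations and the $A_2$ Zamolodchikov relation determine $A, B, C, D$, the polynomial $f$, and produce the remaining identity $s(\alpha_t)\pa_s(z_t) = \pa_s(\alpha_t)(z_s - z_t)$. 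Together this gives both \eqref{equationsfordiff} and \eqref{propertiesofdiff}.

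For the converse I would reverse the logic: start with a potential differential, define a candidate $d$ via \eqref{equationsfordiff} and Leibniz, and verify each defining relation is sent to zero. Every verification from the forward direction runs backwards using \eqref{propertiesofdiff}, so the one-color, 4-valent, and $A_2$ relations are automatic. The lower-rank Zamolodchikov relations of types $A_1 \times A_1$ and $A_1 \times A_2$ likewise reduce to relations already checked (they involve at most two colors interacting in a combinatorially trivial way). What remains is the single $A_3$ Zamolodchikov relation, and the theorem is precisely the assertion that preservation of this relation is the one remaining nontrivial condition.

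The main obstacle is exactly this $A_3$ check. It is a relation between two large linear combinations of diagrams representing the two reduced expressions for the longest element of a rank-3 parabolic subgroup, and applying $d$ by Leibniz yields an even larger expansion. Both sides must be put into a common normal form (say, the double leaves basis) using the lower-order relations already checked, then compared coefficient by coefficient using \eqref{propertiesofdiff}. The theorem sidesteps this computation by encoding the $A_3$ condition into the ``if and only if,'' reflecting exactly the point flagged in the introduction: the $A_3$ Zamolodchikov relation is the sole piece which the authors have not verified for the general differential, though it is directly (and separately) checked in the sequel for the special good differential in which all the parameters $A, B, C, D, f$ simplify dramatically.
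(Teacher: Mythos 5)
Your plan follows the paper's own route essentially step for step: parameterize $d$ on each generator via the double leaves basis of the relevant Hom space, extract the constraints relation by relation (one-color, then distant colors, then adjacent colors), and isolate the unverified $A_3$ Zamolodchikov check as the sole content of the ``if and only if.'' Two small bookkeeping corrections: the six broken $6$-valent vertices span only a four-dimensional space, so the correct parameterization is four independent breaks, an $R$-multiple of the $6$-valent vertex, and one genuinely new double leaf (a cup--cap diagram) whose coefficient must then be shown to vanish via the death-by-pitchfork relations; and the constraint $s(\alpha_t)\pa_s(z_t) = \pa_s(\alpha_t)(z_s-z_t)$ actually falls out of the one-color polynomial-forcing relation (i.e.\ the $W$-equivariance of $d$ on $R$ applied to $\alpha_t$), not from the two-color relations.
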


In other words, we did not check the $A_3$ Zamolodchikov relation, and are unsure whether it is sent to zero by any potential differential, or if there are additional requirements to be met. We suspect there are no additional requirements.

\begin{defn} Let us call a differential $d$ on the Hecke category \emph{good} if the defining idempotent decompositions can be lifted to fantastic filtrations. \end{defn}

We will make this more precise later. In type $A$ the defining decompositions lift the relations
\begin{subequations}
\begin{equation} b_s b_s = v b_s + v^{-1} b_s, \end{equation}
\begin{equation} b_s b_u = b_u b_s \qquad \text{if } m_{su} = 2, \end{equation}
\begin{equation} b_s b_t b_s - b_s = b_t b_s b_t - b_t \qquad \text{if } m_{st} = 3 \end{equation}
\end{subequations}
in the Hecke algebra. Note that the zero differential is good.

\begin{thm} A differential is good (in simply laced type) if and only if it satisfies the following properties. \begin{enumerate}
\item These equations hold.
\begin{subequations}
\begin{equation} d(g_s) = g_s^2, \end{equation}
\begin{equation} \barg_s = s(g_s), \end{equation}
\begin{equation} u(g_s) = g_s \quad \text{whenever } m_{su} = 2, \end{equation}
\begin{equation} st(g_s) = g_t \quad \text{whenever } m_{st} = 3. \end{equation}
\end{subequations}
This last equation implies that if $g_s = 0$ for some $s$, then $g_t = \barg_t = 0$ for all $t$ in the same connected component of the Coxeter graph.
\item If $g_s \ne 0$ then $g_s \notin R^s$.
\item If $g_s \ne 0$ and $m_{st} = 3$ then exactly one of these two possibilities holds. We encode which one holds using an orientation on the corresponding edge in the Coxeter graph.
\begin{enumerate}
\item $g_t$ is fixed by $s$, and $g_s = t(g_t)$ is fixed by $sts$. We orient the edge from $t$ to $s$.
\item $g_s$ is fixed by $t$, and $g_t = s(g_s)$ is fixed by $sts$. We orient the edge from $s$ to $t$.
\end{enumerate}
\item The orientation is \emph{consistent} in that, for any parabolic subgroup of type $A_3$, the middle vertex is neither a source nor a sink.
\end{enumerate}
Finally, a potential differential which satisfies the properties listed in this theorem will send the $A_3$ Zamolodchikov relation to zero, so it does induce a differential on the diagrammatic Hecke category.
\end{thm}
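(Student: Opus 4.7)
The plan is to pass from the classification of potential differentials established in the previous theorem to the good ones by imposing the Fc-filtration condition on each defining idempotent decomposition in turn. In simply-laced type there are three such decompositions, coming from $b_s b_s = v b_s + v^{-1} b_s$, the far commutation $b_s b_u = b_u b_s$ for $m_{su}=2$, and the braid relation $b_s b_t b_s - b_s = b_t b_s b_t - b_t$ for $m_{st}=3$. For each decomposition I would parametrize all admissible choices of projection and inclusion maps, compute the matrix of entries $p_j d(i_k)$ using the explicit formulas \eqref{equationsfordiff}, and ask for which potential differentials the resulting graph $\Gamma_{I,d,p,i}$ can be made acyclic by some choice of $p, i$.

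The first decomposition $B_s \otimes B_s$ into two shifted copies of $B_s$ pins down most of the local data at a single color. Projections and inclusions for the two summands are built from the dot, merge and split generators, and the formula for $d$ on these generators has leading terms $g_s$ and $\barg_s$. Requiring the loop at each summand to vanish forces the standard differential condition $d(g_s) = g_s^2$ on the linear polynomial $g_s$; requiring the two cross compositions not to form a cycle forces $\barg_s = s(g_s)$, which is compatible with the duality between $d$ and $\bard$; and the same cycle analysis shows that if $g_s \in R^s$ then both cross edges are simultaneously nonzero, so $g_s \ne 0$ implies $g_s \notin R^s$. Next, for the commutation $B_s B_u \cong B_u B_s$ with $m_{su}=2$, the isomorphism is the single 4-valent vertex (up to rescaling), and the condition $p\, d(i)=0$ applied to the formula for $d$ on $\Xbgpoly{}{}{}{}$ immediately yields $u(g_s) = g_s$.

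The heart of the necessity direction is the analysis of $B_s B_t B_s \cong B_{sts} \oplus B_s$ for $m_{st}=3$. The small $B_s$ summand is cut out by a composition of a 6-valent vertex with a dot, possibly rescaled by a unit in $R$; enumerating all such choices up to the rescaling symmetry of Remark \ref{rmk:rescalingnoproblem} and evaluating $p\, d(i)$ via the explicit expression for $d$ on the 6-valent vertex with coefficients $A, B, C, D, f$ produces a polynomial identity which splits into two mutually exclusive cases: either $g_t$ is $s$-fixed with $g_s = t(g_t)$, or $g_s$ is $t$-fixed with $g_t = s(g_s)$. Because $g_s \notin R^s$ by the previous step, these two cases are genuinely disjoint when both $g_s, g_t$ are nonzero, and the dichotomy is encoded by orienting the edge $\{s,t\}$ in the Coxeter graph. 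Applying the braid closure $sts = tst$ to the fixing condition yields $st(g_s) = g_t$. The $A_3$ consistency condition then follows by a local check: if the middle vertex in an $A_3$ parabolic with reflections $s_1, s_2, s_3$ were a source or sink, one would derive $g_{s_2} \in R^{s_1} \cap R^{s_3}$ together with $s_1 s_2(g_{s_1}) = g_{s_2}$ and $s_3 s_2(g_{s_3}) = g_{s_2}$, contradicting $g_{s_2} \notin R^{s_2}$ unless all three $g$'s vanish.

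The main obstacle, which is exactly the content of the final sentence of the theorem, is the converse claim that such a potential differential automatically sends the $A_3$ Zamolodchikov relation to zero. This is not a graph-acyclicity question but a relation-preservation calculation: one must apply $d$ to the many terms of the Zamolodchikov diagram, expand using \eqref{equationsfordiff}, and show that the result collapses via the standard differential condition, the translation rule $st(g_s) = g_t$, and the consistent orientation. The plan is to exploit the $A_3$ symmetry of the Zamolodchikov relation to reduce the computation to a small orbit of terms concentrated near the middle color, then verify the remaining identity by direct diagrammatic manipulation; the consistent orientation is what prevents competing translation rules from the two end colors from producing an obstruction. Once the Zamolodchikov relation is confirmed, the sufficiency of the four properties for goodness is obtained by running the graph analyses of the preceding two paragraphs in reverse: each numbered condition was derived as both a necessary and a sufficient condition for the corresponding defining graph to admit an acyclic realization.
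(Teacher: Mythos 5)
Your overall strategy---impose acyclicity of $\Gamma_{I,d,p,i}$ for each of the three defining decompositions in turn---is the same as the paper's, and your treatment of the one-color and distant-color cases matches Propositions \ref{prop:onecolorgood} and \ref{prop:distantcolorgood}. But there are three places where your sketch either misstates the argument or leaves a real gap.

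First, the braid-relation decomposition. Since $B_{sts}$ is only an object of the Karoubi envelope, one cannot simply ``compute $p\,d(i)$ for the $B_s$ summand'': one needs the mixed dg/Fc-filtration formalism of \S\ref{subsec-mixedFc}, whose graph has four vertices $B_s, M', M, B_t$ with a pair of edges between $M$ and $M'$ labelled $\psi d(\phi)\psi$ and $\phi d(\psi)\phi$. Your sketch omits these edges entirely, yet they are where the condition $g_t = st(g_s)$ actually comes from: their vanishing forces $C = D = f = 0$ in the formula for $d$ of the $6$-valent vertex, and $f = -g_t - C\alpha_t + st(g_s) = 0$ then gives $st(g_s) = g_t$. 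It does not follow from ``the dichotomy plus braid closure.'' The dichotomy (your two oriented cases) comes from the remaining edges $d(p_t)\phi$ versus $d(\psi)i_t$, after the loops and the $M$--$M'$ edges have been killed. Second, your $A_3$ consistency argument is not a proof: $g_{s_2} \in R^{s_1} \cap R^{s_3}$ is in no tension with $g_{s_2} \notin R^{s_2}$. The correct local check is: if $s \to t$ then $g_s \in R^t$, and $g_s \in R^u$ by the distant-color condition, so $g_t = st(g_s) = s(g_s)$ satisfies $u(g_t) = us(g_s) = su(g_s) = g_t$; hence $g_t \in R^u$ and the edge is forced to be $t \to u$, so $t$ is not a sink (and dually not a source).

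Third, the Zamolodchikov relation. You propose a direct expansion ``exploiting symmetry,'' but you do not carry it out, and the paper deliberately avoids it: for a good differential all $4$-valent vertices and exactly the $6$-valent vertices of one orientation are killed by $d$, and the $A_3$ Zamolodchikov relation admits a rotated version in which every vertex on both sides is of the killed type, so $d$ annihilates both sides by the Leibniz rule and the relation is preserved trivially. Without either this trick or the completed direct computation, the final sentence of the theorem (and hence the ``if'' direction, i.e.\ that the listed conditions actually yield a well-defined good differential) remains unproved in your write-up.
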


\begin{cor} The only connected simply-laced Coxeter groups which admit a consistent orientation have type $A_n$, type $\tilde{A}_n$, or type $A_{\infty}$, and they each have
precisely two consistent orientations. \end{cor}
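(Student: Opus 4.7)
The plan is to reduce the corollary to a combinatorial analysis of orientations of the Coxeter graph $\Gamma$, governed by the $A_3$ consistency condition: whenever two distinct neighbors $s, u$ of a vertex $t$ are non-adjacent in $\Gamma$ (so that $\{s, t, u\}$ generates an $A_3$ parabolic), the edges $ts$ and $tu$ cannot both point into $t$ nor both out of $t$.

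\medskip

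First I would establish a pigeonhole-based degree bound. If a vertex $t$ had three pairwise-non-adjacent neighbors $s_1, s_2, s_3$, the three $A_3$ constraints at $t$ would require the edges $ts_1, ts_2, ts_3$ to be pairwise of opposite orientation at $t$, which is impossible with only two possible orientations. Consequently, if $\Gamma$ is triangle-free then no pair of neighbors of any $t$ is adjacent, so every vertex has degree at most $2$. A connected graph of maximum degree $2$ is a finite path, a bi-infinite path, or a cycle, yielding types $A_n$, $A_\infty$, or $\tilde{A}_n$ respectively.

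\medskip

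The main obstacle is ruling out graphs that contain a triangle but are not themselves the bare triangle. The $A_3$ consistency alone is insufficient here: in a ``triangle-plus-tail'' (a triangle $\{s, t, u\}$ with one extra neighbor $v$ of $s$), the $A_3$ constraints at $s$ for the two new parabolics $\{v, s, t\}$ and $\{v, s, u\}$ only force $ts$ and $us$ to share their flow direction at $s$, which does not produce an immediate contradiction. To close the case, I would invoke the remaining algebraic conditions of the theorem — in particular $st(g_s) = g_t$ traversing the triangle (which implies $(ust)^2 g_s = g_s$), together with $g_s \notin R^s$ and the compatibilities imposed by the tail edge $sv(g_s) = g_v$. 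Analyzing the fixed subspace of $(ust)^2$ on the reflection representation and combining with $g_s \notin R^s$, I expect to conclude that no non-zero $g_s$ exists in the triangle-plus-tail setting, so that any connected simply-laced Coxeter graph admitting a consistent orientation and containing a triangle must be the bare triangle $\tilde A_2$.

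\medskip

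Finally, for the counting: on a path or on a cycle of length $\geq 4$, the $A_3$ consistency at each interior vertex forces the two incident edges to alternate in/out, so specifying the orientation of one edge determines the entire orientation. This gives exactly two consistent orientations in each case (the two flow directions for a path, clockwise and counterclockwise for a cycle). For $\tilde A_2$, the $A_3$ consistency is formally vacuous (there is no $A_3$ parabolic inside it), so the two ``surviving'' orientations must again be picked out by the algebraic conditions — the six non-cyclic orientations of the three triangle edges are incompatible with $st(g_s) = g_t$ traversed around the triangle together with $g_s \notin R^s$, leaving exactly the two cyclic orientations.
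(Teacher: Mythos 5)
Your first paragraph is the paper's proof in different clothing: a vertex with three pairwise non-adjacent neighbors is exactly an induced $D_4$ (a claw), and your pigeonhole argument is precisely why $D_4$ admits no consistent orientation. The paper's entire proof is the one-sentence assertion that $D_4$ is contained in every connected simply-laced Coxeter graph outside the list, so for triangle-free graphs your argument and the paper's coincide. Your counting step for paths and for cycles of length $\ge 4$ (each interior vertex must have one edge in and one edge out, so the orientation propagates and there are exactly two global flows) is the standard completion that the paper leaves implicit.

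Where you diverge is in taking triangles seriously, and you are right to: a triangle with a tail is a connected simply-laced Coxeter graph, is not of type $A_n$, $\tilde{A}_n$ or $A_\infty$, contains $A_3$ parabolics but no $D_4$ parabolic, and, as you observe, the $A_3$-consistency condition alone does not exclude it (orient the tail edge arbitrarily, orient the two triangle edges at the branch vertex oppositely to it, and orient the third triangle edge freely). Likewise $\tilde{A}_2$ has eight vacuously consistent orientations rather than two. These observations correctly expose that the corollary, read purely combinatorially, and the paper's one-line proof both silently pass over the triangle-containing (claw-free) cases. However, your proposed repair is only a plan: you write that you ``expect to conclude'' that no nonzero $g_s$ exists in the triangle-plus-tail setting after analyzing the fixed space of $(ust)^2$, and that the algebraic conditions ``must'' cut the eight orientations of the bare triangle down to two, but neither computation is carried out, and the answer genuinely depends on the realization, so this is not a routine verification. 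As written, therefore, the proposal proves the corollary only for triangle-free Coxeter graphs --- which is exactly the ground the paper's own $D_4$ argument covers --- and the triangle cases remain open in both.
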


\begin{proof} There is no way to consistently orient the $D_4$ Coxeter graph, which is contained inside any connected simply-laced Coxeter group aside from those listed above. \end{proof}

When the differential is good, the formula for the differential simplifies. The scalar $\kappa = \pa_s(g_s)$ is independent of the choice of $s$ in a connected component of the Coxeter graph.  We have
\begin{subequations}
\begin{equation} d\left(~\Xbgpoly{}{}{}{}~\right) = 0, \end{equation}
\begin{equation} d\left(~\sixvalent~\right) = 0 \quad \text{if } t \to s, \end{equation}
\begin{equation} d\left(~\sixvalent~\right) = \kappa \left(~\brokeXII - \brokeVI ~\right) \quad \text{if } s \to t, \end{equation}
and can deduce that
\begin{equation} d\left(~\cupblue~\right) = -\kappa \startdotblue \startdotblue, \qquad d\left(~\capblue ~\right) = \kappa \finaldotblue \finaldotblue. \end{equation}
\end{subequations}

Finally, we prove that there are only two good differentials up to equivalence.

\begin{thm} Up to an automorphism of the Hecke category in type $A_n$, if there is a good derivation then we can assume that $R$ contains the polynomial ring  $\Bbbk[x_1, \ldots, x_n]$ with its usual $S_n$ action and differential $d(x_i) = x_i^2$, and we can assume that either $g_{(i,i+1)} = x_i$ for all $i$ with $\kappa = 1$, or that $g_{(i,i+1)} = x_{i+1}$ for all $i$ with $\kappa = -1$. An exception is when $\Bbbk$ has characteristic $2$, in which case it is possible that one may need to impose the relation $\sum x_i = 0$. \end{thm}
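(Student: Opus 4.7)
The plan is to leverage the previous classification theorem: a good differential is determined by an orientation of the Coxeter graph together with linear polynomials $g_s$ satisfying explicit equivariance constraints ($s$-invariance patterns, $d(g_s) = g_s^2$, etc.). By the corollary, $A_n$ admits exactly two consistent orientations, and these are interchanged by the upper-lower duality which swaps $g_s \leftrightarrow \barg_s = s(g_s)$. So it suffices to treat one orientation; I would fix $s_1 \to s_2 \to \cdots \to s_n$ and aim for $g_{s_i} = x_i$, $\kappa = 1$; the opposite case follows by duality.

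First I would build candidate polynomial generators. Set $x_1 := g_{s_1}$ and $x_{i+1} := s_i(x_i)$ for $i \geq 1$. The orientation identity $g_{s_{i+1}} = s_i(g_{s_i})$ and induction give $g_{s_i} = x_i$. The equivariance data for a good differential (the $s_{i-1}$-invariance coming from the orientation $s_{i-1} \to s_i$ when $i > 1$, together with $s_j$-invariance for $|i-j| \geq 2$) then forces $s_j$ to act on $\{x_1, \ldots, x_n\}$ as the adjacent transposition $x_j \leftrightarrow x_{j+1}$, fixing the remaining generators. The defining equation $d(g_s) = g_s^2$ translates immediately into $d(x_i) = x_i^2$, while $\kappa = \partial_{s_i}(x_i)$ is the single constant specified by the good differential classification; an overall rescaling of the $x_i$ (an automorphism of the realization) normalizes $\kappa = 1$.

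The next step is to verify that $x_1, \ldots, x_n$ are algebraically independent inside $R$, so that $\Bbbk[x_1, \ldots, x_n]$ embeds into $R$ with the declared $S_n$-action and standard differential. In odd or zero characteristic this is routine: the simple roots are $\alpha_{s_i} = x_i - x_{i+1}$ up to scalar, and the permutation representation of $S_n$ splits as $\Bbbk \cdot \sigma \oplus \spann\{\alpha_{s_i}\}$ where $\sigma = \sum x_i$, so the realization $R$ must genuinely contain all $n$ independent forms. The claim then collapses to matching the induced $S_n$-action and differential against the standard ones, which has been done componentwise above.

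The main obstacle, as anticipated, is characteristic $2$. There the permutation representation of $S_n$ on the span of the $x_i$'s is no longer semisimple: the trivial subrepresentation $\Bbbk \cdot \sigma$ intersects nontrivially with the reflection subrepresentation spanned by the $\alpha_{s_i}$, depending on the parity of $n$. If the realization $R$ was built from the reduced $(n-1)$-dimensional reflection representation, one cannot fit a faithful copy of the full $n$-dimensional permutation representation inside $R$, and the embedding $\Bbbk[x_1, \ldots, x_n] \hookrightarrow R$ must be taken modulo the relation $\sum x_i = 0$. Handling this case carefully --- checking exactly when this relation is forced versus merely allowed, and reconciling it with the $d$-equivariance requirements --- is where the most delicate bookkeeping lies, and accounts for the exceptional clause in the theorem statement.
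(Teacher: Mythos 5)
Your overall strategy matches the paper's: fix the standard orientation, set $x_i := g_{s_i}$ (with $x_n := s_{n-1}(g_{s_{n-1}})$), read off the $S_n$-action from the stabilizer computations, and translate $d(g_s)=g_s^2$ into $d(x_i)=x_i^2$. But there are two genuine gaps. First, your justification of linear/algebraic independence of the $g_i$ via semisimplicity of the permutation representation is wrong whenever the characteristic $p$ is odd and divides $n$: in that case $\sigma=\sum x_i$ lies in the span of the roots and the permutation representation does \emph{not} split, so your "routine" argument does not rule out the relation $\sum g_i = 0$. The paper's actual mechanism is different and is exactly what makes odd characteristic work: the kernel $I$ of $\Bbbk[x_1,\dots,x_n]\to R$ is stable under the differential, the only possible linear relation is $e_1=\sum g_i=0$ (by the stabilizer argument), and then $d(e_1)=e_1^2-2e_2$ forces $2e_2\in I$, which contradicts the algebraic independence of $g_1,\dots,g_{n-1}$ unless $2=0$. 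This is why the exceptional clause is exactly characteristic $2$, with no dependence on the parity of $n$; your appeal to the reduced reflection representation is also off-target, since the paper separately notes that realization admits no good differential at all.

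Second, your normalization of $\kappa$ does not work as stated: rescaling the $x_i$ leaves $\kappa=\pa_s(g_s)$ essentially untouched in the wrong way and, more importantly, destroys the equation $d(x_i)=x_i^2$. In fact nothing needs to be normalized: once $g_i=x_i$ one computes $\pa_{s_i}(x_i)=1$ directly, and in the reverse orientation $g_i=x_{i+1}$ gives $\kappa=-1$, exactly as in the statement. What \emph{does} need an argument --- and is missing from your proposal --- is the final step of upgrading the ring isomorphism $x_i\mapsto g_i$ to an isomorphism of $p$-dg Hecke categories, which is the content of the phrase ``up to an automorphism of the Hecke category.'' The subtlety is that $x_i-x_{i+1}\mapsto g_i-g_{i+1}=\kappa\alpha_i$, so simple roots are rescaled; the paper handles this by invoking the classification of object-fixing autoequivalences and exhibiting the functor that rescales enddots by $\kappa$ and splitting trivalent vertices by $\kappa^{-1}$, then checking it intertwines the differentials. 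Without this step you have only matched polynomial rings, not categories.
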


%%%%%%%%%%%%%
\section{Preliminaries} \label{sec-prelims}
%%%%%%%%%%%%%

We start by fixing a realization of a Coxeter system $(W,S)$. For more on realizations, see \cite[Section 3.1]{EWGr4sb}. Later we will assume that the Coxeter system is simply-laced, but not at first.

Recall that $\pa_s \co R \to R$ denotes the Demazure operator of a simple reflection $s \in S$, which is defined by the formula
\begin{equation} \label{eq:defdem} \pa_s(f) \alpha_s = f - s(f), \end{equation} and satisfies the twisted Leibniz rule
\begin{equation} \pa_s(fg) = \pa_s(f) g + s(f) \pa_s(g). \end{equation}
The Cartan matrix encodes the values of $\pa_s(\alpha_t)$ for various simple reflections.

We always assume that Demazure surjectivity holds, see \cite[Assumption 3.7]{EWGr4sb}. The implication is that for each $s \in S$ there exists some linear polynomial $\om_s$ such that
\begin{equation} \pa_s(\om_s) = 1. \end{equation} For example, if $m_{st} = 3$ we could set $\om_s = - \alpha_t$. We do not assume that $\om_s$ is a fundamental weight (i.e. that $\pa_t(\om_s) = 0$ for $t \ne s$).

 We also assume (c.f. \cite[Definition 1.7]{EWSFrob}) that \begin{equation} \nonumber (\star): \text{ the linear terms in $R^s$ and the linear terms in
$R^t$ together span the linear terms in $R$, whenever $s \ne t$}. \end{equation} This rules out the degenerate possibility that the fixed hyperplanes of $s$ and $t$ coincide. Both
Demazure surjectivity and $(\star)$ are requirements for the Hecke category to behave optimally (though they are sometimes replaced by related assumptions, like reflection faithfulness).

Let us try to define a differential on the Hecke category, by defining it as generally as possible, and then determining what constraints are imposed by the fact that it must preserve
the relations of the category. We will simultaneously determine what additional constraints are imposed if we want the differential to be good. We do not assume that our differential
is invariant under the symmetries of the Hecke category (vertical and horizontal flips, rotation by 180 degrees, Dynkin diagram automorphisms)\footnote{It will turn out that any differential must be invariant under horizontal flip, but not typically under the other symmetries.}.

The reader interested in type $A$ can fix $n \ge 2$ and work with the standard realization associated to $S_n$. The base ring $R$ has the form
\begin{equation}
 R = \Bbbk[x_1, \ldots, x_n] 
\end{equation}
with its usual action of $S_n$, and $\deg x_i = 2$. There is a \emph{standard differential} on the polynomial ring $R$, namely
\begin{equation} 
d(x_i) = x_i^2 
\end{equation}
for all $1 \le i \le n$. We will not assume that this is the induced differential on $R$, although this will eventually be a consequence of our computations.

Below, $s$ will be an arbitrary simple reflection and will be drawn using the color blue, $u$ will be distant from $s$ and drawn as green, and $t$ will be either be another arbitrary
simple reflection, or will be adjacent to $s$ ($m_{st} = 3$) and drawn using the color red.

%%%%%%%%%%%%%
\section{One color considerations} \label{sec-onecolor}
%%%%%%%%%%%%%

%============
\subsection{Dots and polynomials} \label{subsec-dotspolys}
%============

One of the generating morphisms in the diagrammatic Hecke category is the $s$-colored enddot $\finaldotblue$, a morphism $B_s \to \1$ of degree $+1$.
The enddot generates $\Hom(B_s,\1)$ as a free rank $1$ module over $R$. Thus for any differential we have
\begin{equation} \label{eq:denddot} d(\finaldotblue) = \poly{g_s} \finaldotblue \end{equation}
for some linear polynomial $g_s \in R$.

Consider the $s$-colored startdot $\startdotblue$, a morphism $\1 \to B_s$ of degree $+1$. For similar reasons we have
\begin{equation}\label{eq:dstartdot} d(\startdotblue) = \poly{\barg_s} \startdotblue \end{equation}
for some linear polynomial $\barg_s \in R$.

\begin{rem} We will eventually see that the differential on the entire category is determined by the values of $g_s$ and $g_s'$. \end{rem}

Now we compute that
\begin{equation} d(\barbblue) = \longpoly{(g_s + \barg_s)} \barbblue. \end{equation}
Since the barbell is equal to multiplication by $\alpha_s$
\[ \barbblue = \poly{\alpha_s}, \]
we deduce that
\begin{equation} d(\alpha_s) = \alpha_s(g_s + \barg_s). \end{equation}
It turns out that the linear polynomial $g_s + \barg_s$ is more intrinsic than either $g_s$ or $\barg_s$. Henceforth we write
\begin{equation} z_s = g_s + \barg_s,\end{equation}
so that
\begin{equation} \label{eq:dalpha} d(\alpha_s) = \alpha_s z_s \end{equation}
and
\begin{equation} d(\startdotblue) = \longpoly{(z_s - g_s)} \startdotblue. \end{equation}

If the differential on $R$ is known, then $z_s$ is determined by \eqref{eq:dalpha}. Conversely, the differential on $R$ must be such that $d(\alpha_s)$ is a multiple of $\alpha_s$,
which is not true of the most general differential.

\begin{rem} When $s = (i,i+1)$ we have $\alpha_s = x_i - x_{i+1}$. The standard differential satisfies
\begin{equation} d(x_i - x_{i+1}) = x_i^2 - x_{i+1}^2 = (x_i - x_{i+1})(x_i + x_{i+1}) \end{equation}
and thus $z_s = x_i + x_{i+1}$. Later on we will deduce that for a good differential one has $g_s = x_i$ and $\barg_s = x_{i+1}$, or vice versa (this will be the difference between $d$ and $\bar{d}$). Again, we will not assume anything about the differential on $R$ or $z_s$ or $g_s$ just yet, but we recommend that the reader verify the formulas below with these specializations, as a motivational sanity check. \end{rem}

Now we examine the polynomial forcing relations. We have
\begin{equation} \label{eq:polyforce} \poly{f} \lineblue = \brokenblue \longpoly{\pa_s(f)} + \lineblue \longpoly{s(f)}. \end{equation}
Applying the differential to both sides we have
\begin{equation} \longpoly{d(f)} \lineblue = \brokenblue \longpoly{z_s \pa_s(f)} + \brokenblue \longpoly{d(\pa_s(f))} + \lineblue \longpoly{d(s(f))}. \end{equation}
We resolve the LHS by applying \eqref{eq:polyforce} again. In the result, the equality of coefficients of $\brokenblue$ is
\begin{equation} \label{eq:dpaI} \pa_s(d(f)) = z_s \pa_s(f) + d(\pa_s(f)), \end{equation}
and the equality of coefficients of $\lineblue$ is
\begin{equation} \label{eq:sdcommute} s(d(f)) = d(s(f)). \end{equation}
By \eqref{eq:sdcommute} we deduce that the differential $d$ commutes with the action of the symmetric group $S_n$. A consequence is that $d$ preserves the invariant subring $R^s$, for all simple reflections $s$.

In fact, \eqref{eq:dpaI} already follows from the fact that $d$ commutes with the symmetric group action. By definition of the Demazure operator we know that
\[ \alpha_s \pa_s(f) = f - s(f), \]
which appeared above as \eqref{eq:defdem}. Taking the differential of both sides of \eqref{eq:defdem} and using \eqref{eq:sdcommute} we get that
\[ \alpha_s z_s \pa_s(f) + \alpha_s d(\pa_s(f)) = d(f) - s(d(f)) = \alpha_s \pa_s(d(f)). \]
Dividing both sides by $\alpha_s$ (a non-zero-divisor in $R$) we recover \eqref{eq:dpaI}.

In the special case when $f = \alpha_s$, \eqref{eq:dpaI} reduces\footnote{We will often use without mention the fact that $d$ kills any scalar multiple of the identity, and hence any polynomial of degree zero. This is a consequence of the Leibniz rule for differentials.} to
\[ \pa_s(\alpha_s z_s) = 2 z_s.\]
However, we already knew from the twisted Leibniz rule that
\[ \pa_s(\alpha_s z_s) = 2 z_s - \alpha_s \pa_s(z_s)\]
from which we deduce that
\begin{equation} \label{eq:pazsame} \pa_s(z_s) = 0, \qquad z_s \in R^s. \end{equation}

We could have deduced that $z_s \in R^s$ in a different way. Since $\alpha_s^2 \in R^s$, and $R^s$ is preserved by $d$, we see that
\[ d(\alpha_s^2) = 2 \alpha_s^2 z_s \in R^s, \]
which implies that $z_s \in R^s$. We have brought up this alternative method because we can use it for other purposes as well. For example, suppose that $m_{su} = 2$, so that $s$ and $u$ are \emph{distant} simple reflections. Since $\alpha_s \in R^u$, we must have $d(\alpha_s) \in R^u$, from which we deduce that
\begin{equation} z_s \in R^u \quad \text{ when } m_{su} = 2. \end{equation}

Now consider the case when $f = \alpha_t$ for some $s \ne t$, so that $\pa_s(f)$ is some scalar. Then \eqref{eq:dpaI} reduces to
\begin{equation} \pa_s(\alpha_t z_t) = \pa_s(\alpha_t) z_s. \end{equation}
Applying the twisted Leibniz rule, we get
\begin{equation} \label{eq:proportional} s(\alpha_t) \pa_s(z_t) = \pa_s(\alpha_t) (z_s - z_t). \end{equation}
Both sides are zero when $m_{st} = 2$. When $m_{st} > 2$, so that $\pa_s(\alpha_t)$ is nonzero, we deduce that $z_s - z_t$ is proportional to the root $s(\alpha_t)$.

In the special case when $m_{st} = 3$ we find that
\begin{equation} \label{eq:prop3}(\alpha_s + \alpha_t) \pa_s(z_t) = z_t - z_s. \end{equation}
Applying $\pa_t$ to both sides of the equation, we get
\begin{equation} \label{eq:zszt} \pa_s(z_t) = - \pa_t(z_s). \end{equation}

\begin{rem} The first major consequence of \eqref{eq:proportional} is that one need not have an incredibly large realization in order to find a $p$-differential $d$. If the Coxeter
graph is connected, then the subspace spanned by $\{\alpha_s,z_s\}_{s \in S}$ is at most one dimension higher than the subspace spanned by the roots. One should think that there is a
realization spanned by the simple roots and a single new element $z$, and that each $z_s$ can be written as a linear combination of the roots and $z$. Let $R'$ denote the subring of $R$
generated by the roots and $z$. Then, so long as $d(z) \in R'$, our computations will imply that the Hecke category, when defined over $R'$, will be preserved by the differential. In the standard setup for $S_n$, we might let $z = x_1$, for example.
\end{rem}

\begin{rem} We have imposed many conditions on the elements $z_s$ and on the differential, making the situation seem quite overdetermined, and the reader may already be convinced that in type $A$ the standard differential on $R = \Bbbk[x_1, \ldots, x_n]$ is the only one which could satisfy them all. This intuition is entirely false. An arbitrary $S_n$-invariant differential on $R$ has the form
\begin{equation} d(x_i) = A x_i^2 + B x_i \sum_{j \ne i} x_j + C \sum_{j \ne i} x_j^2 + D \sum_{j < k, j \ne i \ne k} x_j x_k, \end{equation}
for some scalars $A, B, C, D$. For any such differential, one can compute that $d(x_i - x_j) = z_{ij}(x_i - x_j)$ where
\begin{equation} z_{ij} = (A-C)(x_i + x_j) + (B - D)\sum_{k \ne i, j} x_k. \end{equation}
One can verify \eqref{eq:proportional} in this general setting. So, in fact, the constraints on $z_s$ which we have deduced above all are consequences of the fact that $d$ is $S_n$-invariant.

The extra condition that $d^p=0$ in characteristic $p$ will restrict the situation considerably, but it is not clear precisely by how much. \end{rem}

%============
\subsection{Trivalent vertices} \label{subsec-trivalent}
%============

We now compute the differential applied to the trivalent vertices $\splitblue$ and $\mergeblue$, which have degree $-1$.

It is not difficult to prove that every degree $+1$ map $B_s \to B_s \ot B_s$ is a linear combination of taking $\splitblue$ and adding a linear polynomial to one of the three regions. (This is because any element of the double leaves basis can be obtained from $\splitblue$ by breaking some lines.) Consequently, let us assume that
\begin{equation} \label{eq:dsplitassump} 
d \left(~\splitpoly{}{}{}~\right) = \splitpoly{f_1}{}{} + \splitpoly{}{f_2}{} + \splitpoly{}{}{f_3}. 
\end{equation}
	
\begin{rem} \label{rem:sliding} This description of a degree $+1$ morphism is not unique, in that different values of $f_1, f_2, f_3$ can give rise to the same morphism. If we wanted to make it unique, we could assert that $f_1$ and $f_2$ are each scalar multiples of $\om_s$. To prove this, set
\begin{equation} g_1 = f_1 - \pa_s(f_1) \om_s. \end{equation}
Then $\pa_s(g_1) = 0$ so $g_1 \in R^s$. So, sliding $g_1$ across the $s$-wall, we can replace $f_1$ with $\pa_s(f_1) \om_s$, and replace $f_3$ with $f_3 + g_1$. We can do the same with $f_2$. \end{rem}

Consider the counit relation
\begin{equation} \label{eq:counitright} \ig{1}{counit} = \linebluelabel{}. \end{equation}
Applying the differential to both sides, we get
\begin{equation} 0 = \poly{f_1} \lineblue + \lineblue \poly{f_2} + \lineblue \poly{f_3} + \lineblue \poly{g_s}. \end{equation}
If $f_1 \notin R^s$ then this equation can not possibly hold. If $f_1 \in R^s$ we could rewrite \eqref{eq:dsplitassump} such that $f_1 = 0$ by setting the new $f_3$ equal to the old $f_1 + f_3$. So we can assume that $f_1 = 0$. But the horizontal reflection (i.e. flip across a vertical axis) of \eqref{eq:counitright} also holds, from which we deduce that (without loss of generality) $f_2 = 0$ as well. Finally, we see that $f_3 = - g_s$. Thus
\begin{equation} \label{eq:dsplit} 
d\left(~\splitpoly{}{}{}~\right) = \splitpoly{}{}{-g_s}.
 \end{equation}

An almost identical computation with the unit relation will give that
\begin{equation} \label{eq:dmerge} 
d \left(~\mergepoly{}{}{}~\right) = \mergepoly{}{}{-\barg_s}. 
\end{equation}
	
We have now pinned down the differential on the trivalent vertices, given the choice of $g_s$ and $\barg_s$. Note that the differential of the cup and cap are not trivial. Using the fact that a cup or cap is the composition of a dot and a trivalent vertex, and forcing the polynomials outside of the cup or cap, we get that
\begin{equation} \label{eq:dcap}
 d \left(~\capblue~\right) = \pa_s(g_s) \finaldotblue \finaldotblue + \capblue \longpoly{g_s - s(\barg_s)}, \end{equation}
\begin{equation} \label{eq:dcup} 
d \left(~\cupblue~\right) = -\pa_s(g_s) \startdotblue \startdotblue + \cupblue \longpoly{\barg_s - s(g_s)}.\end{equation} 
In \eqref{eq:dcap} we have used that $\pa_s(-\barg_s) = \pa_s(g_s)$.
	
We should now check that the remaining one-color relations are satisfied for any such differential. Consider the needle relation
\begin{equation} \label{eq:needle} \needlepoly{} = 0. \end{equation} Applying the differential to the LHS we get
\begin{equation} 
d \left(~\needlepoly{}~\right) = \needlepoly{-z_s} = (-z_s) \cdot \needlepoly{} = 0 \end{equation}
since $z_s \in R^s$. This agrees with the differential applied to the RHS, as desired.

The remaining one-color relations (namely: associativity, coassociativity, and Frobenius associativity) are all easily checked.

%============
\subsection{The quadratic relation and its defining idempotent decomposition} \label{subsec-bsbsdecomp}
%============

The quadratic relation $b_s^2 = [2]b_s$ in the Hecke algebra is categorified by a direct sum decomposition
\begin{equation} \label{BsBsdecomp} B_s B_s \cong B_s(1) \oplus B_s(-1) \end{equation}
in the Hecke category. Here $(1)$ represents the grading shift. To specify the convention: within the shifted bimodule $R(1)$, the identity element lives in degree $-1$.

To show this direct sum decomposition \eqref{BsBsdecomp} we should choose projections $p_{+1},p_{-1}$ and inclusions $i_{+1},i_{-1}$ from the two factors $B_s(+1)$ and $B_s(-1)$ respectively. Let $I = \{+1,-1\}$ be the indexing set for this decomposition.

The projection map $p_{-1}$ must be a scalar multiple of $\mergeblue$, for degree reasons. Similarly, the inclusion $i_{+1}$ must be a scalar multiple of $\splitblue$. These scalars must be invertible in order for \eqref{directsumdecomp} to hold, and rescaling the projection and inclusion maps will not change the graph $\Gamma_{I,d}$, see Remark \ref{rmk:rescalingnoproblem}. Thus we can assume that $p_{-1} = \mergeblue$ and $i_{+1} = \splitblue$ precisely.

The projection map $p_{+1}$ must have the form
\begin{equation} p_{+1} = A \finaldotblue \lineblue + B \ig{.5}{capdotblue} + \mergeblue \poly{f} \end{equation}
for some linear polynomial $f$ and some scalars $A$ and $B$, since this is a general morphism of degree $+1$ written in the double leaves basis. Similarly, we have
\begin{equation} i_{-1} = A' \startdotblue \lineblue + B' \ig{.5}{dotcupblue} + \splitblue \poly{f'}. \end{equation}

In order for \eqref{directsumdecomp} to hold, we need
\begin{subequations}
\begin{equation} A' = 1 = A, \end{equation}
\begin{equation} \label{BB2} B' + B = -2, \end{equation}
\begin{equation} \label{foo5} f' + f = \alpha_s. \end{equation}
\end{subequations}
For this computation we have used the formula
\begin{equation} \lineblue \lineblue = \startdotblue \mergeblue + \finaldotblue \splitblue - 2 \ig{.5}{capcupblue} + \ig{.5}{Ishape} \poly{\alpha_s}. \end{equation}

Recall that the graph $\Gamma_{I,d}$ has edges labeled by $p_j d(i_k)$ for each $j,k \in I$, but with zero-labeled edges removed. We now determine, for any possible choice of projections and inclusions, when the graph the graph $\Gamma_{I,d}$ has no loops and cycles. Let us set $\kappa_s = \pa_s(g_s)$, which will appear because we use
\begin{equation} \needlepoly{g_s} = \kappa_s \lineblue \end{equation}
frequently in this computation. In this section we shorten $\kappa_s$ to $\kappa$. Note also that $\pa_s(\barg_s) = - \kappa$, since $g_s + \barg_s = z_s$ is killed by $\pa_s$.

We have
\begin{subequations}
\begin{equation} p_{+1} d(i_{+1}) = \longpoly{-g_s} \lineblue - B \kappa \brokenblue - \lineblue \kappa f = - \brokenblue \longpoly{(1+B)\kappa} - \lineblue \longpoly{s(g_s) + \kappa f} \end{equation}
\begin{equation} p_{-1} d(i_{-1}) = \poly{\barg_s} \lineblue - B' \kappa \brokenblue - \lineblue \kappa f' = \brokenblue (1+B')(-\kappa) + \lineblue \longpoly{s(\barg_s) - \kappa f'}. \end{equation}
\end{subequations}
In particular, the graph $\Gamma_{I,d}$ will have a loop at vertex $+1$ unless
\begin{subequations} \label{foo1-4} 
\begin{equation} \label{foo1} (1+B) \kappa = 0, \end{equation}
\begin{equation} \label{foo2} s(g_s) + \kappa f = 0, \end{equation}
and a loop at vertex $-1$ unless
\begin{equation} \label{foo3} (1+B') \kappa = 0, \end{equation}
\begin{equation} \label{foo4} s(\barg_s) - \kappa f' = 0. \end{equation}	
\end{subequations}

Note that \eqref{foo3} and \eqref{foo1} are equivalent via \eqref{BB2}. In order for \eqref{foo1-4} to hold, we have two options.

Suppose $\kappa = 0$ so that $g_s \in R^s$. Then \eqref{foo2} and \eqref{foo4} hold if and only if $g_s = \barg_s = 0$. Thus also $z_s = 0$. Then the differential is zero on every $s$-colored diagram: dots, trivalent vertices, and barbells (i.e. $\alpha_s$).

Now suppose $\kappa \ne 0$, so that $g_s \ne s(g_s) \ne 0$. Then $B = B' = -1$ in order for \eqref{foo1} to hold. Moreover, combining \eqref{foo2} and \eqref{foo4} and \eqref{foo5} we get
\begin{equation} \alpha_s = f + f' = \frac{s(\barg_s - g_s)}{\pa_s(g_s)} \end{equation}
or equivalently (since $s(\alpha_s) = -\alpha_s$)
\begin{equation} \pa_s(g_s) \alpha_s = g_s - \barg_s. \end{equation}
However, by definition of the Demazure operator we have
\begin{equation} \pa_s(g_s) \alpha_s = g_s - s(g_s) \end{equation}
and we conclude that
\begin{equation} \barg_s = s(g_s). \end{equation}
In addition to this interesting constraint on the differential, the idempotent decomposition (for which the graph has no cycles) is uniquely determined, since
\begin{equation} f = \frac{-\barg_s}{\kappa}, \quad \quad f' = \frac{g_s}{\kappa}. \end{equation}
In fact, we encourage the reader to doublecheck the following intriguing equality:
\begin{equation}\label{intriguing} \kappa p_{+1} = d(p_{-1}), \qquad -\kappa i_{-1} = d(i_{+1}). \end{equation}
We discuss why it is intriguing in the next section.

We have just ensured that $\Gamma_{I,d}$ has no loops, but we also need to check that it has no cycles, which means checking that either $p_{-1} d(i_{+1}) = 0$ or $p_{+1} d(i_{-1}) = 0$.

We have \begin{equation} p_{-1} d(i_{+1}) = - \kappa \lineblue. \end{equation} Consequently, this edge is zero in the $\kappa = 0$ case, and nonzero otherwise. If $\kappa \ne 0$, then the remaining edge $p_{+1} d(i_{-1})$ had better be zero, or there will be a cycle.

Now for the nastiest computation. We only give the answer; the $*$s represent two different polynomials.
\begin{subequations}
\begin{equation} p_{+1} d(i_{-1}) = \brokenblue \poly{*} + \lineblue \poly{*}. \end{equation}
The coefficient of $\brokenblue$ is %\BE{fixed error in previous version, thanks Thorge}
\begin{equation} \label{foo10} -\kappa(B' f + B f' + B B' \alpha_s) \end{equation}
and the coefficient of $\lineblue$ is
\begin{equation} d(f') - \kappa f f' - f' z_s. \end{equation}
\end{subequations}

When $\kappa \ne 0$, the requirement that $B = B' = -1$ from \eqref{foo1}, combined with \eqref{foo5}, will already imply that \eqref{foo10} is zero. Thus when $\kappa \ne 0$ and there are no loops in $\Gamma_{I,d}$, then there are no cycles in $\Gamma_{I,d}$ if and only if
\[d(f') = \kappa f f' + f' z_s. \]
Plugging in the known values of $f$ and $f'$ we get
\[ \frac{d(g_s)}{\kappa} = \frac{g_s(z_s - \barg_s)}{\kappa} = \frac{g_s^2}{\kappa} \]
or in other words
\begin{equation} \label{eq:dgs} d(g_s) = g_s^2. \end{equation}
Here we have found a very interesting constraint! Of course, the other case when $g_s = 0$ also satisfies this constraint.

In conclusion, we have proven the following result.

\begin{prop} \label{prop:onecolorgood} There is a fantastically filtered idempotent decomposition $B_s B_s \cong B_s(+1) \oplus B_s(-1)$ if and only if $d(g_s) = g_s^2$, $\barg_s = s(g_s)$, and one of the following possibilities holds: \begin{itemize}
\item $g_s = 0$, or
\item $g_s \notin R^s$.
\end{itemize} \end{prop}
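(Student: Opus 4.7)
The plan is to read off the proof directly from the setup in \S\ref{subsec-bsbsdecomp}. Everything needed has essentially been staged already: we have a two-parameter family of projections $p_{+1}$ (scalars $A, B$ and linear polynomial $f$) and inclusions $i_{-1}$ (scalars $A', B'$ and linear polynomial $f'$), together with fixed $p_{-1} = \mergeblue$ and $i_{+1} = \splitblue$ (after rescaling, which does not affect the graph by Remark \ref{rmk:rescalingnoproblem}). The axioms \eqref{directsumdecomp} force $A = A' = 1$ and the pair of relations \eqref{BB2}, \eqref{foo5}; these are the only constraints from the additive direct sum decomposition.

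First I would compute the four edge labels $p_j d(i_k)$ of $\Gamma_{I,d}$ using the differential formulas \eqref{eq:denddot}, \eqref{eq:dstartdot}, \eqref{eq:dsplit}, \eqref{eq:dmerge} together with the one-color reductions (in particular $\needlepoly{g_s} = \kappa \lineblue$, where $\kappa = \pa_s(g_s)$). For the loops $p_{+1}d(i_{+1})$ and $p_{-1}d(i_{-1})$, vanishing in the double leaves basis gives the four scalar equations \eqref{foo1}--\eqref{foo4}. I would then split into two cases.

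If $\kappa = 0$, then \eqref{foo1} and \eqref{foo3} are vacuous, but \eqref{foo2} and \eqref{foo4} collapse to $s(g_s) = 0$ and $s(\barg_s) = 0$, and since $g_s, \barg_s$ are linear this forces $g_s = \barg_s = 0$. This recovers the first bullet, and one checks $d(g_s) = 0 = g_s^2$ and $\barg_s = s(g_s)$ trivially; in this case $p_{-1}d(i_{+1}) = -\kappa \lineblue = 0$, so the graph already has no cycles. If $\kappa \neq 0$, then \eqref{foo1} forces $B = B' = -1$, and combining \eqref{foo2}, \eqref{foo4} with \eqref{foo5} (together with $\pa_s(g_s)\alpha_s = g_s - s(g_s)$) yields $\barg_s = s(g_s)$ and pins down $f = -\barg_s/\kappa$, $f' = g_s/\kappa$. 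In particular $g_s \notin R^s$, and the projection/inclusion pair is uniquely determined.

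It remains to see when $\Gamma_{I,d}$ also has no oriented $2$-cycle. Since $p_{-1}d(i_{+1}) = -\kappa \lineblue$, in the $\kappa \neq 0$ case this edge is nonzero, so we need $p_{+1}d(i_{-1}) = 0$. The main obstacle, and indeed the only nontrivial computation in the proof, is to work out $p_{+1}d(i_{-1})$ explicitly in the double leaves basis. I would carry this out carefully; expressing the result as $\brokenblue \poly{*} + \lineblue \poly{*}$, one finds the $\brokenblue$-coefficient is $-\kappa(B'f + Bf' + BB'\alpha_s)$, which already vanishes on account of $B = B' = -1$ and \eqref{foo5}, and the $\lineblue$-coefficient is $d(f') - \kappa f f' - f' z_s$. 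Substituting $f = -\barg_s/\kappa$, $f' = g_s/\kappa$ and using $z_s = g_s + \barg_s$, the vanishing of the $\lineblue$-coefficient is equivalent to
\[ d(g_s) = g_s^2, \]
finishing the forward direction. The reverse direction is immediate: given the three stated conditions, we have exhibited explicit projections and inclusions for which both loops and the single potentially nontrivial cycle edge all vanish, so $\Gamma_{I,d}$ has no cycles, yielding the fantastic filtration. The only genuinely delicate step is the double-leaves expansion of $p_{+1}d(i_{-1})$; everything else is bookkeeping driven by the formulas already established in \S\ref{subsec-dotspolys} and \S\ref{subsec-trivalent}.
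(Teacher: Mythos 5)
Your proposal is correct and follows the paper's own argument in \S\ref{subsec-bsbsdecomp} essentially verbatim: the same parametrization of $p_{\pm 1}, i_{\pm 1}$, the same loop conditions \eqref{foo1}--\eqref{foo4} splitting into the $\kappa = 0$ and $\kappa \neq 0$ cases, and the same final computation of $p_{+1}d(i_{-1})$ whose $\lineblue$-coefficient yields $d(g_s) = g_s^2$. Nothing is missing.
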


In both cases, the differential of the cap and cup has a simplified formula.
\begin{equation} 
d\left(~\cupblue ~\right) = -\kappa_s \startdotblue \startdotblue, \qquad d
\left(~\capblue~\right) = \kappa_s \finaldotblue \finaldotblue, \end{equation}
where $\kappa_s = \pa_s(g_s)$.
	
%============
\subsection{An alternate approach} \label{subsec-alternate}
%============

The reader should confirm that the condition that $d(g_s) = g_s^2$ is equivalent to the condition that $d(d(i_{+1})) = 0$. We want to briefly elaborate on the relationship between
$d^2(i_{+1}) = 0$ and \eqref{intriguing}; namely, we will prove the existence of a Fc-filtration using only two spare assumptions, with no further computation!

So let us begin again in our attempt to construct a Fc-filtration, noting once more that $i_{+1}$ and $p_{-1}$ are forced to be trivalent vertices (after rescaling) because they live in one-dimensional Hom spaces. Our two assumptions are:
\begin{subequations}
\begin{equation} \label{ddizero} d(d(i_{+1})) = 0, \end{equation}
\begin{equation} \label{pdinonzero} p_{-1} d(i_{+1}) = \lambda \id_{B_s} \end{equation}
\end{subequations}
where $\lambda$ is some invertible scalar. We claim that setting $i_{-1} := \lambda^{-1} d(i_{+1})$ and $p_{+1} := -\lambda^{-1} d(p_{-1})$ will yield a Fc-filtration.

First we check that this gives an idempotent decomposition, and our computation is just an exercise in the Leibniz rule. It is clear that $p_{-1} i_{+1} = 0$ for degree reasons, and $p_{-1} i_{-1} = \id_{B_s}$ is given by \eqref{pdinonzero}. Now
\begin{equation} p_{+1} i_{+1} = -\lambda^{-1} d(p_{-1}) i_{+1} = -\lambda^{-1} (d(p_{-1} i_{+1}) - p_{-1} d(i_{+1})) = -\lambda^{-1}(-\lambda \id_{B_s}) = \id_{B_s}. \end{equation}
Here we used the Leibniz rule together with the fact that $p_{-1} i_{+1} = 0$. Continuing,
\begin{equation} p_{+1} i_{-1} = -\lambda^{-1} d(p_{-1}) i_{-1} = -\lambda^{-1}(d(p_{-1} i_{-1}) - p_{-1} d(i_{-1})) = 0. \end{equation}	
Here we used the Leibniz rule together with the facts that $d(i_{-1}) = 0$ from \eqref{ddizero}, and $d(p_{-1} i_{-1}) = 0$ since $p_{-1} i_{-1}$ is the identity.

Now we check that $\Gamma_{I,d}$ has no cycles. But $d(i_{-1}) = 0$ so there are no oriented edges leaving the vertex $-1$. There will be an edge from $+1$ to $-1$, since $p_{-1}
d(i_{+1}) = \lambda \id_{B_s}$. But there will be no loop at $+1$, since $p_{+1} d(i_{+1}) = \lambda p_{+1} i_{-1} = 0$. In other words, after knowing \eqref{ddizero} and \eqref{pdinonzero}, the existence of a Fc-filtration follows from general principles!

Note that \eqref{pdinonzero} is analogous to the definiteness of the Lefschetz form in Hodge theory. Given morphisms $p_1, p_2 \in \Hom(B_s B_s, B_s)$ in degree $-k$, one can consider 
\begin{equation} \label{lefschetzform} p_1 \circ d^k(\overline{p_2}), \end{equation}
a degree zero endomorphism of $B_s$. Here $\overline{p_2}$ is the vertical flip of $p_2$, living in $\Hom(B_s, B_s B_s)$. By picking out the coefficient of the identity map, we get a bilinear pairing
\begin{equation} p_1 \circ d^k(\overline{p_2}) = \langle p_1, p_2 \rangle \cdot \id_{B_s}. \end{equation}
Then \eqref{pdinonzero} implies that the form in degree $-1$ is (positive or negative) definite. This is not surprising for a form on a one-dimensional space, but we use this as an example. In many similar decompositions, the corresponding forms appear to have a signature which matches the Hodge-Riemann bilinear relations.

This section was a preview of the conjectures in the sequel to \cite{EQsl2}, where we push this idea as far as we can take it.

%============
\subsection{Implications in type $A$} \label{subsec-implications1}
%============

Suppose we are in the setting of Proposition \eqref{prop:onecolorgood}, and that $R = \Bbbk[x_1, \ldots, x_n]$.

Let $s = (i,i+1)$. For the standard differential, $d(\alpha_s) = \alpha_s z_s$, where $z_s = x_i + x_{i+1}$. Since $z_s \ne 0$, we must have $g_s \ne 0$, so $g_s \notin R^s$. The
only nonzero linear polynomials $g$ which satisfy $d(g) = g^2$ are $g = x_i$ for some $i$. The only $g$ for which $d(g) = g^2$ and $g + s(g) = z_s$ are $g = x_i$ and $g = x_{i+1}$.

Let us briefly consider non-standard differentials on $R$, and ask whether we can find $g_s$, $z_s$, etcetera as above.

\begin{example} Consider the case when $n=2$ and the
differential satisfies \begin{equation} d(x_1) = x_2^2 - 2 x_1 x_2, \quad d(x_2) = x_1^2 - 2 x_1 x_2. \end{equation} Then \begin{equation} d(x_1 - x_2) = (x_1 - x_2)(-x_1 - x_2)
\end{equation} so that $z_s = -(x_1 + x_2)$. Meanwhile, if $\rho$ is a primitive third root of unity and $g = \rho x_1 + \rho^2 x_2$, the reader can verify that $d(g) = g^2$ and $g + s(g) = -(x_1 + x_2)$. \end{example}

\begin{example}
We have computed that the most general differential on $\Bbbk[x_1,x_2]$ which works. It is $S_2$-invariant and satisfies
\begin{equation} d(x_1) = A x_1^2 - 2C x_1 x_2 + C x_2^2. \end{equation}
The case $C = 0$ is the standard differential. Then $g = a_1 x_1 + a_2 x_2$ satisfies $d(g) = g^2$ and $g + s(g) = z_s$, when $a_1$ and $a_2$ are distinct roots of the quadratic equation \begin{equation} y^2 + y(C-A) + C(C-A) = 0. \end{equation}
\end{example}

%============
\subsection{Relation to singular Soergel calculus}
%============

A diagrammatic calculus for singular Soergel bimodules (also called the \emph{(diagrammatic) Hecke 2-category}) in type $A$ is long-standing work in progress of Elias-Williamson.
Diagrammatic calculus for dihedral groups is due to Elias \cite{ECathedral}, where one can also find a review of the background. We will not provide further review here. Let us examine
what kind of differentials could exist on the Hecke 2-category, and how they would restrict to the ordinary Hecke category. We will be brief and only provide summary results.

The one-color generators of the Hecke 2-category are oriented cups and caps, as below.
\[ \ig{1}{cupcapsbsb} \]
They also span their morphism spaces up to the action of $R$ (in the white region). Consequently,
any differential on the Hecke 2-category must send these diagrams to a multiple of themselves by a linear polynomial; for the four diagrams pictured we call these linear polynomials $g_s$, $\bar{f}_s$, $f_s$, $\barg_s$ respectively. Checking that the isotopy/biadjunction relations are preserved by the differential will immediately imply that $f_s = - g_s$, $\bar{f}_s = - \barg_s$. This then implies \eqref{eq:denddot}, \eqref{eq:dstartdot},  \eqref{eq:dsplit} and \eqref{eq:dmerge}. Checking the remaining relations again gives the same constraints on $g_s$.

\begin{rem} This is a more restrictive and easier way to find a formula for \eqref{eq:dsplit}. In the computation of \S\ref{subsec-trivalent}, the fact that $f_1 = f_2 = 0$ is a
consequence of the differential being restricted from the Hecke 2-category, rather than being a consequence of the unit relation.

In similar fashion, the fact that $d$ preserves $R^s$ is forced upon any differential on the Hecke 2-category, rather than being a consequence. After all, $R^s$ is the endomorphism ring of the identity $1$-morphism of the parabolic subset $s$. \end{rem}

The defining idempotent decomposition is the decomposition of $R$ as an $(R^s,R^s)$-bimodule into two shifted copies of $R^s$. That this descends to a Fc-filtration gives exactly
the same two possibilities as in Proposition \ref{prop:onecolorgood}, although the computation is quite different.

%%%%%%%%%%%%%
\section{Two distant colors} \label{sec-distant}
%%%%%%%%%%%%%

%============
\subsection{4-valent vertices} \label{subsec-fourvalent}
%============

Let $s$ and $u$ be distant. Let us recall the assumption $(\star)$ from \S\ref{sec-prelims}, which states that all linear polynomials are in the span
of $R^s$ and $R^u$. One implication (with Demazure surjectivity) is the existence of a linear polynomial $\om_s$ for which $\pa_s(\om_s) = 1$ and $\pa_u(\om_s)=0$, and similarly
for a linear polynomial $\om_u$.

We now compute the differential applied to the generating $4$-valent vertex $\Xbg$, also known as the \emph{crossing}, which has degree $0$.

Once again, by examination of the double leaves basis, one can deduce that any degree $+2$ map $B_s \ot B_u \to B_u \ot B_s$ is a linear combination of taking $\Xbg$ and placing a linear polynomial in one of the four regions. Here is a useful consequence of $(\star)$ which we wish to record.

\begin{lem} \label{lem:shuffle4} Let $f$ be an arbitrary linear polynomial in $R$. Then
\begin{equation} \Xbgpoly{-f}{}{}{} + \Xbgpoly{}{f}{}{} + \Xbgpoly{}{}{-f}{} + \Xbgpoly{}{}{}{f} = 0. \end{equation} \end{lem}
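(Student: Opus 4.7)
The plan is to use the assumption $(\star)$ to split $f$ into pieces invariant under each of the two distant reflections, and then to apply the standard one-strand polynomial forcing relation locally at each of the four half-strands meeting at the crossing.

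First I would invoke $(\star)$ to write $f = f_s + f_u$ with $f_s \in R^s$ and $f_u \in R^u$. The claimed identity is linear in $f$, so it suffices to verify it separately for $f_s$ and for $f_u$; the two cases are completely symmetric under interchanging the distant colors blue and green.

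Next I would apply the polynomial forcing relation \eqref{eq:polyforce} locally at each of the two blue half-strands meeting at the vertex. Sliding a polynomial across a blue half-strand produces a ``broken blue'' term weighted by $\pa_s$ of the polynomial and transports the polynomial to the other side by the action of $s$. For $f_s \in R^s$ both $\pa_s(f_s) = 0$ and $s(f_s) = f_s$, so each such slide is \emph{free}: it gives honest equalities of morphisms, not ones up to correction terms. The blue strand enters and exits the crossing at opposite corners, so its two halves separate one pair of adjacent regions at the top of the vertex and one pair at the bottom. Using these two slides I obtain identifications of the form $\Xbgpoly{f_s}{}{}{} = \Xbgpoly{}{f_s}{}{}$ (or $\Xbgpoly{}{}{}{f_s}$, depending on which way blue runs) for one pair, and the analogous identification for the other pair. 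The signs in the claimed identity alternate around the vertex, so adjacent regions carry opposite signs; substituting the identifications into the sum cancels it term-by-term to zero. The same argument with green half-strands, applied to $f_u$, handles the other summand, and adding the two cases proves the lemma.

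The hard part will be essentially nothing beyond bookkeeping: the only substantive ingredient is $(\star)$, without which the polynomial $f$ would not admit the decomposition into $s$- and $u$-invariant pieces that makes each slide free. The local polynomial forcing at an individual half-strand of the $4$-valent vertex is the same relation \eqref{eq:polyforce} already used for a single strand, since polynomial sliding depends only on the strand being crossed and not on the surrounding diagram.
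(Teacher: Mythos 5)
Your proposal is correct and is essentially the paper's own proof: both decompose $f = f_s + f_u$ via $(\star)$ and use that an $s$-invariant (resp.\ $u$-invariant) polynomial slides freely across the blue (resp.\ green) strand, so the four terms cancel in adjacent pairs. The only difference is cosmetic bookkeeping (you invoke linearity to treat the two pieces separately, while the paper slides both pieces of the left and right copies of $f$ into the top and bottom regions at once).
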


\begin{proof} By $(\star)$ we can write $f = g + h$ where $g \in R^s$ and $h \in R^u$. Take the copies of $+f$ on the right and left, decompose them as $g + h$, and slide $g$ across the $s$-strand and $h$ across the $u$-strand. Now we have $-f + g + h = 0$ in both the top and bottom, so the result is zero. \end{proof}

\begin{rem} In fact, an analogous lemma applies to any $2m_{st}$-valent vertex: the alternating sum of placing $f$ in each region is zero. The proof is the same.\end{rem}

Now consider an arbitrary degree $+2$ morphism, which is a linear combination of $\Xbg$ with various linear polynomials. Using the lemma we can remove the polynomial from the leftmost region. Using the same simplification as in Remark \ref{rem:sliding}, we can assume that the polynomial on top is a multiple of $\om_s$ and the polynomial on bottom a multiple of $\om_u$, by modifying the polynomial on the right. Thus the degree $+2$ morphism $d(\Xbg)$ has the following form.
\begin{equation} 
d\left(~\Xbgpoly{}{}{}{}~\right) = \Xbgpoly{b \om_s}{}{}{} + \Xbgpoly{}{}{a \om_u}{} + \Xbgpoly{}{f}{}{}. \end{equation}
One could also slide these polynomials to the left, giving the equivalent statement
\begin{equation} 
d\left(~\Xbgpoly{}{}{}{}~\right) = \qquad \Xbgpoly{}{}{}{b \om_s + a \om_u\;\;\;} + \Xbgpoly{}{f}{}{}. \end{equation}

Let us check the relation which slides a dot through a crossing. There are actually four such relations, depending on where one puts the dot. First we place the dot in the upper left.
\begin{equation} \ig{.5}{Xbgdot} = \lineblue \finaldotgreen. \end{equation}
Taking the differential of both sides, we get
\begin{equation} \longpoly{g_u + b \om_s} \lineblue \finaldotgreen + \lineblue \finaldotgreen \longpoly{f + a \om_u} = \lineblue \finaldotgreen \longpoly{g_u}. \end{equation}
In order to break the blue strand with zero coefficient, we need $\pa_s(g_u + b \om_s) = 0$, or in other words
\begin{equation} b = - \pa_s(g_u). \end{equation}
Then, given that $g_u + b \om_s \in R^s$, the equality of both sides is equivalent to
\[ f + a \om_u + b \om_s + g_u = g_u, \] or in other words
\[ f + a \om_u + b \om_s = 0. \]

Similarly, checking the relation with a dot on the upper right gives
\begin{equation} a = \pa_u(g_s). \end{equation}
Thus we deduce that
\begin{equation} f = \pa_s(g_u) \om_s - \pa_u(g_s) \om_u. \end{equation}
This pins down the differential of the crossing exactly, given the known differential of the dots. Note that if $g_s \in R^u$ and $g_u \in R^s$ then $d(\Xbg) = 0$, and otherwise the differential is nonzero.

Let us simplify the answer. Adding and subtracting $g_u$ to the region on top, the polynomial in that region is
\begin{equation} - g_u + (g_u - \pa_s(g_u) \om_s). \end{equation}
Since $(g_u - \pa_s(g_u) \om_s)$ is $s$-invariant, it slides through to the rightmost region, leaving $-g_u$ behind. Similarly, we can add and subtract $g_s$ to the region on bottom, and slide $(\pa_u(g_s) \om_u - g_s)$ through the $u$-wall to the rightmost region. What remains is
\begin{equation} \label{eq:dXbg} 
d\left(~\Xbgpoly{}{}{}{}~\right) = \Xbgpoly{-g_u}{}{}{} + \Xbgpoly{}{}{g_s}{} + \Xbgpoly{}{g_u - g_s}{}{}. \end{equation}
Using Lemma \ref{lem:shuffle4} we can shuffle around these polynomials to obtain other nice descriptions of $d(\Xbg)$ as well. For example,
\begin{equation} 
d\left(~\Xbgpoly{}{}{}{}~\right) = \quad \Xbgpoly{}{}{}{-g_u} + \Xbgpoly{}{}{{\scriptscriptstyle \; g_s + g_u}}{} + \Xbgpoly{}{-g_s}{}{}\;\;\;. \end{equation}

Let us check the cyclicity of the $4$-valent vertex. We have
\begin{equation} 
d\left(~\ig{1}{Xbgtwist}~\right) = \quad \Xgbpoly{-g_s}{}{}{} + \Xgbpoly{}{g_s - g_u}{}{} \quad + \Xgbpoly{}{}{*}{} + \quad \Xgbpoly{}{}{}{g_s + \barg_s}, \end{equation}
where $* = g_u - g_s - \barg_s$.
We used \eqref{eq:dXbg} to get four of these terms (which, instead of appearing on the top, right, and bottom as in \eqref{eq:dXbg}, now appear on the right, bottom, and left respectively because of the twisting). The remaining four terms came from taking the differential of the blue cap and cup. Now notice that $g_s + \barg_s = z_s \in R^u$, so it can be slid through the green strand. This cancels four of the terms, yielding
\begin{equation} 
d\left(~\ig{1}{Xbgtwist}~\right) = \quad \Xgbpoly{-g_s}{}{}{} + \Xgbpoly{}{g_s - g_u}{}{} \quad + \Xgbpoly{}{}{g_u}{}. \end{equation}
But this agrees with the formula \eqref{eq:dXbg} after swapping the colors $s$ and $u$, as desired.

We claim that checking the relations below is completely straightforward, and we leave it as an exercise to the reader. \begin{itemize}
\item Sliding a trivalent vertex through a crossing.
\item Crossings are inverse isomorphisms: $\Xbg \circ \Xgb = \linegreen \lineblue$.
\item The $A_1 \times A_1 \times A_1$ Zamolodchikov relation. \end{itemize}

%============
\subsection{The commuting relation and its idempotent decomposition} \label{subsec-commuting}
%============

One of the defining relations of the Hecke algebra is the commuting relation $b_s b_u = b_u b_s$, when $m_{su} = 2$. This is lifted by an isomorphism 
\begin{equation} B_s B_u \cong B_u B_s \end{equation}
in the Hecke category, and the inverse isomorphisms $p$ and $i$ are given by the 4-valent vertices. In fact, up to rescaling, these are the only choices of isomorphisms.

One should think that $B_s B_u$ has a decomposition with a single term, so that $I$ has size $1$. We still need to check that $\Gamma_{I,d}$ has no cycles, or in other words, that there is no loop at the single vertex. In other words, we need to check that $p d(i) = 0$ in order for the isomorphism to lift to a fantastic filtration, and for $[B_s][B_u] = [B_u][B_s]$ in the $p$-dg Grothendieck group.

\begin{prop} \label{prop:distantcolorgood} The isomorphism $B_s B_u \cong B_u B_s$ is an isomorphism of $p$-DG objects if and only if
\begin{equation} \Xbg \circ d\left(~\Xgb~\right) = 0, \end{equation}
if and only if $g_s \in R^u$ and $g_u \in R^s$, if and only if $d(\Xbg) = 0$. \end{prop}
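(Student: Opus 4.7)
My plan is to establish all three equivalences using two short arguments: a purely formal Leibniz-rule manipulation for (a) $\iff$ (c), and a double-leaves basis computation for (b) $\iff$ (c).

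For (a) $\iff$ (c), I would apply the Leibniz rule to the identity $\id_{B_u B_s} = \Xbg \circ \Xgb$ (recorded at the end of \S\ref{subsec-fourvalent}) to obtain
\[ 0 = d(\Xbg) \circ \Xgb + \Xbg \circ d(\Xgb), \]
so that $\Xbg \circ d(\Xgb) = 0$ if and only if $d(\Xbg) \circ \Xgb = 0$. Post-composing the latter with $\Xbg$ and using the companion identity $\Xgb \circ \Xbg = \id_{B_s B_u}$ converts this into $d(\Xbg) = 0$, which is (c).

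For (b) $\Rightarrow$ (c), I would start from the alternate form
\[ d(\Xbg) = \Xbgpoly{}{}{}{-g_u} + \Xbgpoly{}{}{g_s + g_u}{} + \Xbgpoly{}{-g_s}{}{} \]
recorded after \eqref{eq:dXbg}. Under the hypothesis $g_s \in R^u$, the polynomial $-g_s$ slides across the $u$-strand from the right region into the bottom region without producing a broken-strand correction, merging with $g_s + g_u$ to leave $g_u$. Then, since $g_u \in R^s$, that $g_u$ slides across the $s$-strand from the bottom region into the left region, again with no broken term, canceling the $-g_u$ already sitting there. The outcome is $d(\Xbg) = 0$.

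For (c) $\Rightarrow$ (b), which is the main obstacle: I would expand $d(\Xbg)$ in a double-leaves basis of the degree $+2$ component of $\Hom(B_s B_u, B_u B_s)$. This space is free over the scalars on (i) the crossing $\Xbg$ carrying a linear polynomial in one chosen region, together with (ii) the two broken-strand diagrams $D_s$ and $D_u$ in which the $s$-, respectively $u$-strand of the crossing is replaced by a pair of dots. Using the polynomial forcing rule \eqref{eq:polyforce} to slide each of the three polynomial insertions in \eqref{eq:dXbg} into the chosen region, each slide across an $s$-strand produces a multiple of $D_s$ with coefficient given by a Demazure derivative $\pa_s$ of the slid polynomial, and similarly for $u$. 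Careful bookkeeping of which strand separates which pair of regions yields an identity of the shape
\[ d(\Xbg) = \Xbgpoly{}{}{}{\phi} + c_s\, D_s + c_u\, D_u, \]
where $c_s$ is proportional to $\pa_s(g_u)$ and $c_u$ is proportional to $\pa_u(g_s)$. Invoking freeness, the vanishing of $d(\Xbg)$ forces $c_s = 0 = c_u$, giving $g_u \in R^s$ and $g_s \in R^u$; the vanishing of $\phi$ is then automatic, consistent with (b) $\Rightarrow$ (c). The truly delicate step here is the sign-and-coefficient accounting in the sliding, but this is a finite mechanical check once one fixes the geometric convention for which strand bounds which region.
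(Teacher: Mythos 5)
Your proposal is correct and follows essentially the same route as the paper, which dismisses these as ``very straightforward computations'' resting on the formula \eqref{eq:dXbg} and its normal form derived in \S\ref{subsec-fourvalent}: your Leibniz-rule argument for the equivalence of $\Xbg \circ d(\Xgb)=0$ with $d(\Xbg)=0$, and your expansion of $d(\Xbg)$ in the degree $+2$ basis (crossing with a polynomial, plus the two broken-strand double leaves with coefficients $\pm\pa_s(g_u)$ and $\pm\pa_u(g_s)$) are exactly the intended checks. The only point left implicit is the first equivalence in the statement, which the paper notes is definitional: being an isomorphism of $p$-dg objects means precisely that the one-vertex graph $\Gamma_{I,d}$ has no loop, i.e.\ that $\Xbg \circ d(\Xgb) = 0$.
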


\begin{proof} The first equivalence is definitional, and the remainder are very straightforward computations. \end{proof}
	
%============
\subsection{Implications in type $A$} 
%============

For the standard differential, as deduced in \S\ref{subsec-implications1}, when $s = s_i = (i,i+1)$ then either $g_{s_i} = x_i$ or $g_{s_i} = x_{i+1}$. In either case, $g_s \in R^u$ whenever $u$ is distant from $s$, as desired.

%============
\subsection{Relation to thick calculus} \label{ssec:thickdistant}
%============

There is very little additional perspective added from considering thick calculus (or singular calculus) for two distant colors. For completeness, we include a brief discussion.

In the thick calculus, there is an object $B_{s,u}$ corresponding to the parabolic subset $\{s,u\}$, which we draw with an olive strand. There are splitting and merging maps which give inverse isomorphisms between $B_{s,u}$ and the tensor product $B_s B_u$, as well as the tensor product $B_u B_s$. Here is the drawing of the isomorphism $B_{s,u} \to B_u B_s$.

\[ \ig{.75}{thickcommsplit} \]

The differential applied to this map will place a linear polynomial in each of the regions. Using $(\star)$, there is no need to put a polynomial in the region on top, so the polynomials must go on the left and right.
\begin{equation} d(\ig{.5}{thickcommsplit}) = \poly{f_1} \ig{.5}{thickcommsplit} + \ig{.5}{thickcommsplit} \poly{f_2}. \end{equation}
Moreover, by moving polynomials in $R^{su}$ from left to right, we can assume that $f_1 = b' \om_s + a' \om_u$ for some scalars $a', b'$.

The inverse isomorphism comes from flipping the diagram upside down, and applying the differential to the relation stating that they are inverse isomorphisms implies
\begin{equation} d(\ig{.5}{thickcommmerge}) = \poly{-f_1} \ig{.5}{thickcommmerge} + \ig{.5}{thickcommmerge} \poly{-f_2}. \end{equation}
For these inverse isomorphisms to give $p$-dg isomorphisms, it is immediate to compute that $f_1 = f_2 = 0$.

The isomorphism $B_{s,u} \to B_s B_u$ has a similar picture, with polynomials $h_1$ and $h_2$ instead. Composing the morphisms $B_s B_u \to B_{s,u} \to B_u B_s$ is supposed to give
the $4$-valent vertex, and the relations in the thick calculus are derived from this. So, without any appreciable difference from the computation in the rest of this section, we
deduce that %\BE{fixed a sign, thanks Thorge!}
\begin{equation} \label{fhfh} h_1 - f_1 = f_2 - h_2 = \pa_s(g_u)\om_s - \pa_u(g_s) \om_u. \end{equation}

Thus, unlike the ordinary diagrammatic Hecke category, a differential on the thick calculus is \emph{not} uniquely determined by what happens to the dots. One can choose $f_1$ and $f_2$ freely, and then $h_1$ and $h_2$ are determined by \eqref{fhfh}. However, if the differential is to be good (i.e. it satisfies Proposition \ref{prop:distantcolorgood}) then $f_1 = f_2 = h_1 = h_2 = 0$, and this additional flexibility disappears.

%
% In singular Soergel calculus, there is a degree zero generator for each pair $s, t \in S$ with $m_{st} \ne \infty$, drawn as a crossing. \BE{pic} It spans its morphism space over the action of $R$ in the white region. Thus we have
% \begin{equation} \BE{d of crossing is crossing with polynomial y_{st}} \end{equation}
% for some $y_{st} \in R$. There is a relation
% \begin{equation} \BE{doublecrossing is identity} \end{equation}
% and by applying the differential to both sides, we quickly determine that
% \begin{equation} y_{st} = - y_{ts}. \end{equation}
% These crossings are always supposed to represent inverse isomorphisms, so if we want the corresponding relation to hold in the $p$-DG Grothendieck group we need
% \begin{equation} \BE{crossing composed with d of other crossing} = 0 \end{equation}
% which holds if and only if $y_{st} = 0$.
%
% We also have cups and caps for other inclusions of parabolic subgroups, like \BE{picture of s in su}, and their differential must again come from multiplication by a degree $2$ element.
% \begin{equation} \BE{d of cap is cap with polynomial} g^s_{st} \end{equation}
%
% Blah blah blah blah blah \BE{oh, this might be long, but it would handle a lot of the adjacent color stuff too...}

%%%%%%%%%%%%%
\section{Two adjacent colors} \label{sec-adjacent}
%%%%%%%%%%%%%

%============
\subsection{Preliminaries} \label{sec-abusivenotation}
%============

Let $s$ and $t$ be adjacent, so that $m_{st} = 3$. Our next task is to compute the differential applied to the generating $6$-valent vertex, which has degree $0$. So let us examine for
a time the space of degree $+2$ morphisms $B_s B_t B_s \to B_t B_s B_t$.

Inside this space we have six \emph{broken 6-valent vertices} which we call the 12 o'clock break, the 2 o'clock break, etcetera.
\[ \brokeXII \quad \brokeII \quad \brokeIV \quad \brokeVI \quad \brokeVIII \quad \brokeX \]
In fact, these six morphisms only span a four-dimensional subspace. There is a relation
\begin{equation} \label{aroundtheclockorig} \brokeX + \brokeXII = \brokeIV + \brokeVI, \end{equation}
together with its rotations around the clock. Using this, one can prove that 12, 2, 4, and 6 o'clock form a basis for this four-dimensional subspace.

Now consider the larger subspace spanned by the broken 6-valent vertices, and by morphisms obtained from the 6-valent vertex by adding a linear polynomial in some region. Forcing a polynomial from one region to another is possible by \eqref{eq:polyforce} at the cost of breaking some strands. Consequently, this subspace is spanned by the four broken 6-valent vertices above, and by the morphisms of the form
\[ \sixvalent \tallpoly{f}. \]

Meanwhile, there are five double leaves of degree $+2$, so the entire space of degree $+2$ morphisms is spanned by the previous subspace and one more morphism, which we can take to be
\[ \cupcapthing. \]

Thus we can assume that
\begin{equation} \label{eq:dsixorig} 
d\left(~\sixvalent~\right) = A \brokeXII + B \brokeVI + C \brokeII + D \brokeIV + E \cupcapthing + \sixvalent \tallpoly{f} \end{equation}
for some scalars $A, B, C, D, E$ and some linear polynomial $f$.

At this point, our computations are linear combinations of many very similar diagrams, and it helps to introduce some new and extremely abusive notation. First, instead of drawing a broken strand, we will merely draw a strand marked with a coefficient. Thus if $A$ is a scalar then
\begin{equation} 
\linebluelabel{A} := A \cdot \ig{1}{brokenblue}. 
\end{equation}
Second, with the understanding that we are only interested in morphisms of a particular degree, we \textbf{superimpose} diagrams rather than adding them together! 
So, if we knew we were discussing morphisms $B_s \to B_s$ of degree $2$, then
\[ \otherpoly{{\color{olive} f_1}} \linebluelabel{A} \otherpoly{{\color{olive} f_2}} \]
is actually shorthand for the sum
\[ \otherpoly{f_1} \linebluelabel{} + A \ig{1}{brokenblue} + \linebluelabel{} \otherpoly{f_2}. \]
This notation is horribly abusive (oh, if only our mothers could see us now!) but being able to draw a large linear combination succinctly has benefits both for the page count and for the readability and understandibility of this paper. For sanity, we will always use the {\color{olive} olive} color when we use this particular abuse of notation.

So, instead of \eqref{eq:dsixorig} we can write the very compact
\begin{equation} \label{eq:dsix} d(\sixvalentlabel{}{}{}{}{}{}{}{}) = \sixvalentlabel{A}{C}{D}{B}{}{}{f}{} + E \cupcapthing. \end{equation}

To give some more examples, here is a rewriting of \eqref{aroundtheclockorig}, after multiplication by $A$.
\begin{equation} \label{eq:aroundtheclock} \sixvalentlabel{A}{}{}{}{}{A}{}{} = \sixvalentlabel{}{}{A}{A}{}{}{}{}. \end{equation}
Here is a useful equation involving the trivalent vertex.
\begin{equation} \label{eq:aroundthetri} \mergelabel{}{}{A}{}{} = \mergelabel{A}{A}{}{-A\alpha_s}{} \quad. \end{equation}

Finally, before we begin the computation proper, let us set up notation for some important scalars. Let
\begin{equation} \kappa_{ss} = \pa_s(g_s), \quad \kappa_{st} = \pa_s(g_t), \quad \kappa_{ts} = \pa_t(g_s), \quad \kappa_{tt} = \pa_t(g_t), \end{equation}
\begin{equation} \bark_{ss} = \pa_s(\barg_s), \quad \bark_{st} = \pa_s(\barg_t), \quad \bark_{ts} = \pa_t(\barg_s), \quad \bark_{tt} = \pa_t(\barg_t). \end{equation}
Here are some things we know about these scalars. Since $g_s + \barg_s \in R^s$ and similarly for $t$, we have 
\begin{subequations} \label{kprops}
\begin{equation} \kappa_{ss} + \bark_{ss} = 0, \quad \kappa_{tt} + \bark_{tt} = 0.\end{equation}
Using \eqref{eq:zszt} we have
\begin{equation} \kappa_{st} + \bark_{st} + \kappa_{ts} + \bark_{ts} = 0. \end{equation}
\end{subequations}

%============
\subsection{Pinning down the differential} 
%============

We will pin down the coefficients $A, B, C, D, E, f$ by checking the ``death by pitchfork'' relations, which say that putting a dot on one input to a 6-valent vertex, and merging its two neighbors with a trivalent vertex, will yield the zero morphism. In this first example, we put the dot at 12 o'clock.
\begin{equation} \label{pitchforkontop} \ig{.5}{pitchforkontop} = 0 \end{equation}
Applying the differential to both sides we get
\begin{equation} 0 = {{\color{olive}
\labellist
\small\hair 2pt
 \pinlabel {$A$} [ ] at 32 40
 \pinlabel {$B$} [ ] at 32 12
 \pinlabel {$C$} [ ] at 40 37
 \pinlabel {$D$} [ ] at 40 13
 \pinlabel {$f$} [ ] at 49 25
 \pinlabel {$*$} [ ] at 40 52
\endlabellist
\centering
\ig{1}{pitchforkontop}
}} - E \ig{1}{Eterm}. \end{equation}
The polynomial $*$ is $g_s - \barg_t$. Now, the contributions of the terms with $B$, $D$, and $f$ are all zero, since they have a subdiagram with \eqref{pitchforkontop} inside. The $A$ term just creates a copy of $A \alpha_s$ which is added to $*$. Then $*$ can be forced to the right using \eqref{eq:polyforce}, and only the term which breaks the strand will survive. Consequently, the result is
\begin{equation} 0 = (C + \pa_t(g_s - \barg_t + A \alpha_s)) \ig{1}{doublebreaktop} - E \ig{1}{Eterm}. \end{equation}
These two diagrams are linearly independent, from which we conclude that
\begin{equation} E = 0,\end{equation}
\begin{equation} \label{blah1} C - A + \kappa_{ts} - \bark_{tt} = 0. \end{equation}

Similarly, we can put the pitchfork on bottom, putting the dot on 6 o'clock. An entirely similar argument shows that ($E = 0$ again, and)
\begin{equation} \label{blah2} D - B + \bark_{st} - \kappa_{ss} = 0. \end{equation}

In general, the two-color Hecke category has an automorphism which flips a diagram upside-down and swaps the colors $s$ and $t$. This automorphism preserves the 6-valent vertex. The effect of this symmetry will be to swap $g_s$ with $\barg_t$, $\barg_s$ with $g_t$, $A$ with $B$, $C$ with $D$, $\kappa_{st}$ with $\bark_{ts}$, etcetera, while fixing $E$ and $f$. Thus this symmetry interchanges \eqref{blah1} with \eqref{blah2}.

Now we put the dot on 10 o'clock, and take the differential.
\begin{equation} 0 = {\color{olive} {
\labellist
\small\hair 2pt
 \pinlabel {$A$} [ ] at 32 42
 \pinlabel {$B$} [ ] at 32 11
 \pinlabel {$C$} [ ] at 40 37
 \pinlabel {$D$} [ ] at 40 13
 \pinlabel {$f$} [ ] at 47 25
 \pinlabel {$*$} [ ] at 16 31
 \pinlabel {$\barg_s$} [ ] at 7 8
\endlabellist
\centering
\ig{1}{pitchfork10}
}}. \end{equation}
This time the polynomial $*$ is $g_t - \barg_s - g_s$, with contributions coming from the red dot, the blue trivalent vertex, and the blue cup. Since $g_s + \barg_s = z_s \in R^s$, it can be slid out of this region. Thus the only surviving terms are
\begin{equation} 0 = (A + \kappa_{st}) \ig{1}{doublebreakleft}. \end{equation}
From this (and the corresponding computation for 8 o'clock, given by symmetry) we deduce that
\begin{equation} A = - \kappa_{st}, \quad B = - \bark_{ts}. \end{equation}
Then \eqref{blah1} and \eqref{blah2} give us
\begin{equation} \label{eq:CandDare} C = \bark_{tt} - \kappa_{ts} - \kappa_{st}, \quad D = \kappa_{ss} - \bark_{st} - \bark_{ts}. \end{equation}
	
Thus we have solved for $A, B, C, D, E$. If we resolved the 2 o'clock and 4 o'clock relations we would get equations for $\pa_s(f)$ and $\pa_t(f)$, but it is easier to check other
relations to determine $f$ precisely.

%============
\subsection{Checking the relations} 
%============

The relation called \emph{2-colored associativity} is the equality
\begin{equation} \label{eq:2assoc} \ig{1}{2assoc1} = \ig{1}{2assoc2}. \end{equation}

We apply the differential to the left hand side.
\begin{equation} 
d\left(~\ig{1}{2assoc1}~\right) = {\color{olive} {
\labellist
\small\hair 2pt
 \pinlabel {$A$} [ ] at 20 81
 \pinlabel {$A$} [ ] at 35 43
 \pinlabel {$B$} [ ] at 19 52
 \pinlabel {$B$} [ ] at 35 11
 \pinlabel {$C$} [ ] at 29 80
 \pinlabel {$C$} [ ] at 50 50
 \pinlabel {$D$} [ ] at 29 58
 \pinlabel {$D$} [ ] at 48 14
 \pinlabel {$f$} [ ] at 39 68
 \pinlabel {$f$} [ ] at 56 27
 \pinlabel {$-g_s$} [ ] at 16 29
\endlabellist
\centering
\ig{1}{2assoc1}
}} \end{equation}
Now we simplify this morphism. The central red strand, if broken, yields the zero morphism; consequently one of the $B$ terms does not contribute, and the $-g_s$ term can be forced across this red line to become $-t(g_s)$. This polynomial can be forced further across the blue line, breaking this line with a coefficient of $-\pa_s(t(g_s))$, which is added to the previous coefficient for breaking this line, namely $A+D$. Note that
\[ t(g_s) = g_s - \kappa_{ts} \alpha_t \]
so that
\begin{equation} \pa_s(t(g_s)) = \kappa_{ss} + \kappa_{ts}. \end{equation}
In particular, the line is broken with coefficient
\begin{equation} A + D - \pa_s(t(g_s)) = - \kappa_{st} + \kappa_{ss} - \bark_{st} - \bark_{ts} - \kappa_{ss} - \kappa_{ts} = 0 \end{equation}
where we used \eqref{kprops} to deduce that the coefficient was zero. Thus what remains at the end is
\begin{equation} 
d\left(~\ig{1}{2assoc1}~\right) = {\color{olive} {
\labellist
\small\hair 2pt
 \pinlabel {$A$} [ ] at 20 81
 \pinlabel {$B$} [ ] at 35 11
 \pinlabel {$C$} [ ] at 29 80
 \pinlabel {$C$} [ ] at 50 50
 \pinlabel {$D$} [ ] at 48 14
 \pinlabel {$*$} [ ] at 39 68
 \pinlabel {$f$} [ ] at 56 27
\endlabellist
\centering
\ig{1}{2assoc1}
}} \end{equation}
where the polynomial $*$ is $f - st(g_s)$.

Meanwhile, on the right hand side of 2-colored associativity, we have
\begin{equation} 
d\left(~\ig{1}{2assoc2}~\right) = {\color{olive} {
\labellist
\small\hair 2pt
 \pinlabel {$A$} [ ] at 24 39
 \pinlabel {$B$} [ ] at 24 8
 \pinlabel {$C$} [ ] at 35 30
 \pinlabel {$D$} [ ] at 33 10
 \pinlabel {$f$} [ ] at 53 20
 \pinlabel {$-g_t$} [ ] at 47 54
\endlabellist
\centering
\ig{1}{2assoc2}
} = {
\labellist
\small\hair 2pt
 \pinlabel {$A$} [ ] at 24 39
 \pinlabel {$B$} [ ] at 24 8
 \pinlabel {$C$} [ ] at 41 50
 \pinlabel {$C$} [ ] at 54 50
 \pinlabel {$D$} [ ] at 33 10
 \pinlabel {$f$} [ ] at 53 20
 \pinlabel {$*$} [ ] at 47 54
\endlabellist
\centering
\ig{1}{2assoc2}
}}. \end{equation}
Here $*$ represents $-g_t - C \alpha_t$, and the second equality followed from \eqref{eq:aroundthetri}.

Applying the differential to both sides of \eqref{eq:2assoc}, we have a linear combination of terms which look like either side of \eqref{eq:2assoc} except with one strand broken, and the coefficients on each strand match up perfectly. We also have $f$ in the rightmost region, matching perfectly, and a polynomial in the upper right region, which does not obviously match. So all that is required is for these polynomials to agree, namely,
\begin{equation} \label{blah7} f = - g_t - C \alpha_t + st(g_s). \end{equation}
This solves for the polynomial $f$.

To simplify further, note that
\[st(g_s) = g_s - \kappa_{ss} \alpha_s - \kappa_{ts} (\alpha_s + \alpha_t) \]
so that (using \eqref{eq:CandDare}) we have
\begin{equation} \label{blah8} f = g_s - g_t - (\kappa_{ss} + \kappa_{ts})\alpha_s - (\bark_{tt} - \kappa_{st}) \alpha_t. \end{equation}

There is another version of 2-colored associativity (which is not a rotation of this one), which can be checked by flipping upside-down and swapping colors, a symmetry we have previously discussed. This yields the equality
\begin{equation} f = \barg_t - \barg_s - (\bark_{tt} + \bark_{st}) \alpha_t - (\kappa_{ss} - \bark_{ts}) \alpha_s. \end{equation}
We must confirm that these two equations for $f$ are consistent. Taking the difference, we have
\[ g_s + \barg_s - g_t - \barg_t - (\kappa_{ts} + \bark_{ts}) \alpha_s + (\kappa_{st} + \bark_{st}) \alpha_t \]
which is equal to
\[ z_s - z_t - \pa_t(z_s) \alpha_s + \pa_s(z_t) \alpha_t.\]
Using \eqref{eq:zszt} and \eqref{eq:prop3}, this is zero as desired.

The next relation we should check is when a dot is placed on the 6-valent vertex. We can check what happens if a blue dot is placed, and determine what happens for a red dot by
symmetry. Confirming that the differential preserves this relation is extremely tedious but straightforward, and uses only the tricks already used above, so we leave it to the reader.

Finally, we should check the cyclicity relation. We postpone the 3-color relations until the next chapter.

First we must discuss the differential of the other 6-valent vertex, from $B_t B_s B_t \to B_s B_t B_s$. In the next chapter, the 6-valent vertex $B_s B_t B_s \to B_t B_s B_t$ will be denoted $\phi$, and the 6-valent vertex $B_t B_s B_t \to B_s B_t B_s$ will be denoted $\psi$.

We can
deduce many things about $\psi$ by applying the symmetry of the two-color Hecke category which swaps $s$ and $t$; this will swap $g_s$ with $g_t$, $\barg_s$ with $\barg_t$,
etcetera. Thus we have
\begin{equation} \label{eq:dothersix} 
d\left(~\othersixvalentlabel{}{}{}{}{}{}{}{}~\right) = \othersixvalentlabel{A'}{C'}{D'}{B'}{}{}{f'}{}, \end{equation}
where
\begin{subequations}
\begin{equation} A' = - \kappa_{ts}, \quad B' = - \bark_{st}, \end{equation}
\begin{equation} C' = \bark_{ss} - \kappa_{ts} - \kappa_{st}, \quad D' = \kappa_{tt} - \bark_{st} - \bark_{ts}, \end{equation}
\begin{equation} f' = g_t - g_s - (\kappa_{tt} + \kappa_{st})\alpha_t - (\bark_{ss} - \kappa_{ts}) \alpha_s. \end{equation}
\end{subequations}

Using \eqref{kprops} one can see that
\begin{equation} \label{eq:CDCprimeDprime} D' = -C, \quad C' = -D. \end{equation}

Now applying the formula \eqref{eq:dsix}, and taking the differential of cups and caps as well, we get
\begin{equation} 
d\left(~\ig{2}{6valentrot}~\right) = {\color{olive}{
\labellist
\large\hair 2pt
 \pinlabel {$C$} [ ] at 32 40
 \pinlabel {$D$} [ ] at 41 38
 \pinlabel {$B$} [ ] at 40 12
 \pinlabel {$A$} [ ] at 22 38
\small\hair 2pt
 \pinlabel {$\barg_s$} [ ] at 55 24
 \pinlabel {$g_t$} [ ] at 7 24
 \pinlabel {$-\barg_t$} [ ] at 26 8
\tiny\hair 2pt
 \pinlabel {$f-g_s$} [ ] at 38 44
\endlabellist
\centering
\ig{2}{otherbig6valent}
}}. \end{equation}
Our goal will be to simplify this linear combination until it agrees with \eqref{eq:dothersix}, which we do by forcing all the polynomials to the rightmost region, one strand at a time.

Recall that $A = - \kappa_{st} = - \pa_s(g_t)$. If we try to force $g_t$ from the leftmost region across its neighboring blue strand (10 o'clock), we get $s(g_t)$ on the other side, plus a broken strand with scalar $+ \pa_s(g_t)$. Thus the 10 o'clock break has overall coefficient $A + \pa_s(g_t) = 0$. Similarly, if we force $-\barg_t$ across its neighboring blue strand (6 o'clock), we get $-s(\barg_t)$ on the other side, and break the strand with coefficient $-\pa_s(\barg_t) = -\bark_{st} = B'$. Combining these two manipulations we have
\begin{equation} d\left(~\ig{3}{6valentrot}~\right) = {\color{olive}{
\labellist
\large\hair 2pt
 \pinlabel {$C$} [ ] at 32 40
 \pinlabel {$D$} [ ] at 41 38
 \pinlabel {$B$} [ ] at 40 12
 \pinlabel {$B'$} [ ] at 32 10
\small\hair 2pt
 \pinlabel {$\barg_s$} [ ] at 55 24
\tiny\hair 2pt
 \pinlabel {$f-g_s$} [ ] at 38 44
 \pinlabel {$s(g_t)$} [ ] at 26 44
 \pinlabel {$-s(\barg_t)$} [ ] at 38 9
\endlabellist
\centering
\ig{3}{otherbig6valent}
}}. \end{equation}

In similar fashion, we continue to force more polynomials to the right. Forcing $s(g_t)$ across its red neighbor will produce $ts(g_t)$ on the other side, and break the strand with coefficient $\pa_t(s(g_t)) = \kappa_{tt} + \kappa_{st}$. Adding this to $C$, the overall coefficient on the 12 o'clock break will be
\[ C + \kappa_{tt} + \kappa_{st} = \bark_{tt} + \kappa_{tt} - \kappa_{ts} = - \kappa_{ts} = A'. \]

To get the coefficient of the 2 o'clock break, we need to force $f - g_s + ts(g_t)$ across its neighboring blue strand. In the same way that we deduced \eqref{blah8} from \eqref{blah7}, one can also deduce that
\begin{equation} f = g_s + C' \alpha_s - ts(g_t). \end{equation}
Hence $f - g_s + ts(g_t) = C' \alpha_s$, and it breaks the 2 o'clock strand with coefficient $\pa_s(C' \alpha_s) = 2 C'$. Adding this to the existing coefficient $D = -C'$, we get an overall coefficient of $C'$.

The ultimate coefficient of the 4 o'clock break will be
\[ B - \pa_t(s(\barg_t)) = - \bark_{ts} - \bark_{tt} - \bark_{st} = D'.\]

Putting this together, we get
\begin{equation} 
d\left(~\ig{1.5}{6valentrot}~\right) = {\color{olive}{
\labellist
\large\hair 2pt
 \pinlabel {$A'$} [ ] at 32 40
 \pinlabel {$C'$} [ ] at 42 38
 \pinlabel {$D'$} [ ] at 40 12
 \pinlabel {$B'$} [ ] at 32 10
\small\hair 2pt
 \pinlabel {$*$} [ ] at 55 24
\endlabellist
\centering
\ig{1.5}{otherbig6valent}
}} \end{equation}
where
\begin{equation}\label{eq:fprimestar} * = \barg_s + s(f - g_s + ts(g_t)) - ts(\barg_t) = \barg_s - C' \alpha_s - ts(\barg_t). \end{equation}
It remains to show that $* = f'$. We leave this to the reader, having done enough similar computations.

%============
\subsection{The braid relation and its defining idempotent decomposition: abstractions}
\label{subsec-mixedFc}
%============

The braid relation on the Hecke algebra, reinterpreted in the Kazhdan-Lusztig presentation, is
\begin{equation} \label{eq:KLbraid} b_s b_t b_s - b_s = b_t b_s b_t - b_t, \end{equation}
as both sides are actually descriptions of the Kazhdan-Lusztig basis element $b_{sts}$. The categorification of this statement is the direct sum decompositions
\begin{equation} \label{eq:KLbraidcatfd} B_s B_t B_s \cong M \oplus B_s, \quad B_t B_s B_t \cong M' \oplus B_t, \end{equation}
together with the isomorphism
\begin{equation} \label{eq:MM} M \cong M'. \end{equation}
However, $M$ and $M'$ are not objects in the diagrammatic Hecke category, but are only objects in the Karoubi envelope, being the image of certain idempotents.

In the introduction we discussed the practical way to prove a direct sum decomposition, see \eqref{directsumdecomp}. Analogously, one wishes to prove \eqref{eq:KLbraidcatfd} and \eqref{eq:MM} practically, but using only morphisms between Bott-Samelson bimodules. For sake of brevity let us write $X = B_s B_t B_s$ and $Y = B_t B_s B_t$. One should provide morphisms
\[ i_s \co B_s \to X, \quad i_{t} \co B_t \to Y, \quad p_s \co X \to B_s, \quad p_t \co Y \to B_t, \]
\[ \phi \co X \to Y, \quad \psi \co Y \to X, \]
satisfying
\begin{subequations} \label{subeq:KLbraidcatfy}
\begin{equation} p_s i_s = \id_s, \quad p_t i_t = \id_t, \end{equation}
\begin{equation} \phi i_s = 0, \quad \psi i_t = 0, \quad p_s \psi = 0, \quad p_t \phi = 0, \end{equation}
\begin{equation} \id_X = \psi \phi + i_s p_s, \quad \id_Y = \phi \psi + i_t p_t. \end{equation}
\end{subequations}
One should think that the maps $\phi$ and $\psi$ pass through the common summand $M \cong M'$, so their composition is the idempotent projecting to this summand. In particular, $M$ is to be identified with the object $(X, \psi \phi)$ in the Karoubi envelope, and $M'$ with $(Y, \phi \psi)$. The map $\phi$ induces an isomorphism
\[ \bar{\phi} \co (X,\psi \phi) \to (Y, \phi \psi)\]
and similarly $\psi$ induces the inverse isomorphism $\bar{\psi}$. Now we ask what extra conditions produce the appropriate relations on the Grothendieck group in the $p$-dg setting.

\begin{rem} Everything we say in this section, including the main result (Proposition \ref{prop:mixedFc}), is easily adaptable to the general situation where one has
\[ X \cong M \oplus P, \quad Y \cong M' \oplus Q, \quad M \cong M' \]
in some $p$-dg category, where $P$ and $Q$ are genuine objects, while $M$ and $M'$ are only objects in the Karoubi envelope. \end{rem}

The identity of $X$ is decomposed as a sum of two orthogonal idempotents $e_1 = i_s p_s$ and $e_2 = \psi \phi$. In order for the decomposition $X \cong M \oplus B_s$ to be a
dg-filtration, we need\footnote{As a reminder, this condition will imply that either $\Hom(X,-) e_1$ or $\Hom(X,-) e_2$ is preserved by the differential, as a left $p$-dg module over the
category, and that the other one is the quotient of $\Hom(X,-)$ by the first. It also implies the analogous condition for right $p$-dg modules.} either $d(e_1) e_2 = 0$ or $d(e_2) e_1 = 0$. Now
\begin{equation} d(e_1) e_2 = d(i_s p_s) \psi \phi = i_s d(p_s) \psi \phi \end{equation}
since $p_s \psi = 0$. Clearly $d(e_1) e_2=0$ if $d(p_s) \psi \phi = 0$. Conversely, by postcomposing with $p_s$, if $d(e_1) e_2 = 0$ then $d(p_s) \psi \phi = 0$. Using a similar argument, precomposing with either $\psi$ or $\phi$, we see that
\begin{equation} d(e_1) e_2 = 0 \iff d(p_s) \psi = 0. \end{equation}
We will use this pre- and post-composition trick several times below.

Similarly,
\begin{equation} d(e_2) e_1 = 0 \iff d(\phi) i_s = 0. \end{equation}
This seems entirely analogous, but the proof is slightly trickier. Clearly
\begin{equation} d(e_2) e_1 = \psi d(\phi) i_s p_s \end{equation}
so
\begin{equation} d(e_2) e_1 = 0 \iff \psi d(\phi) i_s = 0 \iff \phi \psi d(\phi) i_s = 0.\end{equation}
But in fact $\phi \psi$ can be replaced with $\id_{tst}$ here, since the difference is $i_t p_t d(\phi) i_s$, and
\begin{equation} p_t d(\phi) i_s = d(p_t \phi) i_s - d(p_t) \phi i_s = 0. \end{equation}	
	
In addition to checking that the decomposition on $X$ is a dg-filtration, we need to confirm that the image of $i_s p_s$ has the appropriate $p$-dg structure, which amounts to checking that
\begin{equation} d(p_s) i_s = 0 \end{equation}
as a degree $2$ endomorphism of $B_s$.

Analogous conditions need to hold for the decomposition of $Y$.

Finally, we need to confirm that $M = (X,\psi \phi)$ and $M' = (Y,\phi \psi)$ are isomorphic as $p$-dg modules, with their induced differentials. Recall that, if $(X,e)$ and $(Y,f)$ are two objects in the Karoubi envelope, then the induced differential $\bar{d}$ on $\Hom((X,e),(Y,f)) = f \Hom(X,Y) e$ is
\begin{equation} \bar{d}(f \xi e) = f d(f \xi e) e \end{equation}
where $\xi \in \Hom(X,Y)$ and $d$ is the usual differential on $\Hom(X,Y)$. If $\bar{\phi}$ and $\bar{\psi}$ are to induce inverse isomorphisms of $p$-dg modules then we need
\begin{equation} \bar{d}(\bar{\phi}) \bar{\psi} = 0 \end{equation}
and (equivalently)
\begin{equation} \bar{d}(\bar{\psi}) \bar{\phi} = 0. \end{equation}
The first equation unravels to
\[ \phi \psi d(\phi) \psi \phi = 0 \]
which is equivalent by pre- and post-composition to
\begin{equation} \label{foo11a} \psi d(\phi) \psi = 0. \end{equation}
The second equation is equivalent to
\begin{equation} \label{foo11b}\phi d(\psi) \phi = 0. \end{equation}
	
To convince the reader that \eqref{foo11a} and \eqref{foo11b} are equivalent, let us pre- and post-compose \eqref{foo11a} with $\phi$. The result is
\[ \phi \psi d(\phi) \psi \phi = \phi \psi d(\phi \psi) \phi - \phi \psi \phi d(\psi) \phi = \phi \psi d(\phi \psi) \phi - \phi d(\psi) \phi. \]
So $\psi d(\phi) \psi = 0$ will imply that $\phi d(\psi) \phi = 0$ so long as $\phi \psi d(\phi \psi) \phi = 0$. But since $d(\id_{tst}) = 0$, we know that $d(\phi \psi) = - d(i_t p_t)$. Then
\[ \psi d(\phi \psi) \phi = - \psi d(i_t) p_t \phi - \psi i_t d(p_t) \phi = 0 + 0 = 0, \] as desired.

Together, all these conditions imply that $X$ is fantastically filtered by $M$ and $B_s$ in the Karoubi envelope, they $Y$ is fantastically filtered by $M'$ and $B_t$, and that $M \cong M'$ as $p$-dg objects. In conclusion, we have proven the following result.

\begin{prop} \label{prop:mixedFc} Given maps $i_s, p_s, i_t, p_t, \phi, \psi$ as in \eqref{subeq:KLbraidcatfy}, then consider the following graph.
	
\begin{equation}
\begin{tikzcd}[column sep=huge]
 B_s \ar[loop left, "d(p_s)i_s"] \ar[r, "d(\phi)i_s", yshift=.7ex] & M' \ar[l, "d(p_s) \psi", yshift=-.7ex] \ar[r, "\psi d(\phi) \psi", yshift=.7ex] & M \ar[l, "\phi d(\psi) \phi", yshift=-.7ex] \ar[r, "d(p_t) \phi", yshift=.7ex] & B_t \ar[l, "d(\psi) i_t", yshift=-.7ex] \ar[loop right, "d(p_t)i_t"]
\end{tikzcd}
\end{equation}
Erase the edges with zero labels (noting that if one edge between $M$ and $M'$ is zero, then so is the other). If the resulting graph has no cycles or loops, then the idempotent
decompositions \eqref{eq:KLbraidcatfd} and \eqref{eq:MM} lift to fantastic filtrations, the objects $M$ and $M'$ are cofibrant, and the braid relation \eqref{eq:KLbraid} holds in the $p$-dg Grothendieck group. \end{prop}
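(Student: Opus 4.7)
The plan is to leverage the equivalences derived in the preceding discussion, which have already translated every assertion one wishes to prove into a vanishing condition on one of the edge labels of the graph; the proposition then becomes essentially a bookkeeping exercise. First, acyclicity kills the loops, giving $d(p_s) i_s = 0$ and $d(p_t) i_t = 0$, which are exactly the diagonal conditions required of an Fc-filtration to force $\Hom(X,-) e_1 \iso \Hom(B_s,-)$ and $\Hom(Y,-) e_1 \iso \Hom(B_t,-)$ as $p$-dg modules. Second, for the pair of edges between $M$ and $M'$: as shown when passing from \eqref{foo11a} to \eqref{foo11b}, either of $\psi d(\phi)\psi$ and $\phi d(\psi)\phi$ vanishes if and only if the other does, so the only acyclic possibility is that both are zero. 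This is precisely the statement that $\bar{\phi}$ and $\bar{\psi}$ intertwine the induced differentials, giving the required $p$-dg isomorphism $M \iso M'$ in the Karoubi envelope.

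The remaining edges form two disjoint 2-cycles, on $\{B_s, M'\}$ and on $\{M, B_t\}$. Acyclicity forces at least one edge of each 2-cycle to vanish. For the $\{B_s, M'\}$ pair, vanishing of $d(p_s)\psi$ is equivalent to $d(e_1) e_2 = 0$ and vanishing of $d(\phi) i_s$ is equivalent to $d(e_2) e_1 = 0$; whichever holds exhibits $X$ as dg-filtered by $B_s$ and $M$ in one of the two possible orders. Combined with the diagonal loop vanishing from the previous step, this upgrades the dg-filtration to an Fc-filtration whose subquotients are identified (as $p$-dg modules) with $\Hom(B_s, -)$ and $\Hom(M, -)$. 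The analysis for $Y$ is identical after swapping $s$ and $t$, using the 2-cycle on $\{M, B_t\}$.

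Cofibrancy of $M$ and $M'$ then follows from the Fc-filtration by general principles: $\psi\phi \cdot \Hom(X,-)$ is a subquotient of the cofibrant module $\Hom(X,-)$ and is isomorphic as a $p$-dg module to $\Hom(M, -)$, whence $M = (X, \psi\phi)$ is cofibrant in the $p$-dg Karoubi envelope, and similarly for $M'$. Passing to the $p$-dg Grothendieck group, the Fc-filtrations give $[X] = [B_s] + [M]$ and $[Y] = [B_t] + [M']$, the $p$-dg isomorphism gives $[M] = [M']$, and subtracting yields $[X] - [B_s] = [Y] - [B_t]$, which is precisely the categorified braid relation \eqref{eq:KLbraid}. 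The one point that requires genuine care is the cofibrancy step for $M$ and $M'$, since these live only in the Karoubi envelope; all the other verifications are immediate from the equivalences already established, so the real work of the proposition was done in the computations leading up to its statement.
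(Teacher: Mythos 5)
Your assembly of the graph-theoretic conditions is correct and follows the paper's own route: the loops encode $d(p_s)i_s = 0$ and $d(p_t)i_t = 0$; the forced vanishing of both $M$--$M'$ edges (using the equivalence of \eqref{foo11a} and \eqref{foo11b}) gives the $p$-dg isomorphism $M \cong M'$; and killing one edge of each remaining 2-cycle is exactly the condition $d(e_1)e_2 = 0$ or $d(e_2)e_1 = 0$ that exhibits $X$ (resp.\ $Y$) as dg-filtered, which the loop conditions then upgrade to a fantastic filtration. The Grothendieck-group bookkeeping at the end is also fine.

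The one genuine gap is exactly where you sensed danger: the cofibrancy of $M$ and $M'$. Your argument --- that $\psi\phi\cdot\Hom(X,-)$ is a subquotient of the cofibrant module $\Hom(X,-)$ and is isomorphic to $\Hom(M,-)$ --- is circular and, as a general principle, false. For a Karoubi-envelope object $M = (X,\psi\phi)$, the module $\Hom(M,-)$ \emph{is} by definition $\psi\phi\,\Hom(X,-)$, so the asserted isomorphism carries no content; and subquotients of cofibrant modules need not be cofibrant --- the introduction's remark contrasting dg-filtrations with Fc-filtrations makes precisely this point. In the usual Fc-filtration setup cofibrancy comes for free because the subquotients are identified with $\Hom(M_j,-)$ for \emph{honest} representable objects $M_j$; here $M$ is not an honest object, so that argument is unavailable. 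The correct argument (given in the remark following the proposition) uses the two-out-of-three property in the triangulated $p$-dg derived category: the fantastic filtration places $X = B_sB_tB_s$ in the triangulated hull of $B_s$ and $M$, hence $M$ lies in the triangulated hull of $X$ and $B_s$, and since those two are representable Bott--Samelson objects and therefore cofibrant, so is $M$ (and symmetrically $M'$).
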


\begin{rem} The existence of a fantastic filtration is useful not only because it implies a relation in the Grothendieck group, but also because it implies, e.g., that $B_s B_t
B_s$ is in the triangulated hull of $B_s$ and $M$, and hence in the triangulated hull of $\{B_s, B_t, B_t B_s B_t\}$. Since all the other objects are cofibrant, the 2/3 rule in triangulated categories implies that $M$ is cofibrant. \end{rem}

\begin{rem} One should think of the pair of edges between $M$ and $M'$ as being like a loop. There is a partial idempotent completion where we could add a new object $Z$
corresponding to either $M$ or $M'$, and rewrite the idempotent decompositions as \[ X \cong B_s \oplus Z, \quad Y \cong B_t \oplus Z. \] Then we could use the ordinary theory of
Fc-filtrations to analyze these decompositions. We would get two graphs corresponding to the right and left halves of the graph in Proposition \ref{prop:mixedFc}, and in each graph
there would be a loop at $Z$ which corresponds to the edges between $M$ and $M'$. \end{rem}

%============
\subsection{The braid relation and its defining idempotent decomposition: computations}
\label{subsec-braidtime}
%============

Now let us compute the graph of Proposition \ref{prop:mixedFc}. We should note that all maps $i_s, p_s, \phi$, etcetera are uniquely determined up to scalar, living in one-dimensional
Hom spaces, so there is only one graph to compute. The maps $\phi$ and $\psi$ are 6-valent vertices, and the remaining maps are pitchforks.

First we check the loop at $B_s$. We have
\begin{equation} \label{eq:loopatBs} d(p_s) i_s = \needlepoly{*}, \end{equation}
where $*$ is the polynomial
$\alpha_t(g_t - \barg_s)$. Note that $p_s i_s$ is the same picture with a red barbell $\alpha_t$ inside. Applying $d$ to $p_s$ produces the extra factor of $g_t - \barg_s$.

For \eqref{eq:loopatBs} to be zero we need $*$ to be $s$-invariant, which requires that $g_t - \barg_s$ is proportional to $s(\alpha_t) = \alpha_s + \alpha_t$. This is a new condition! The scalar of proportionality can be determined by applying Demazure operators. Since $\pa_s(s(\alpha_t)) = \pa_t(s(\alpha_t)) = 1$, we see that
\begin{equation} \label{eq:foo12a} g_t - \barg_s = (\kappa_{st} - \bark_{ss}) (\alpha_s + \alpha_t) = (\kappa_{tt} - \bark_{ts}) (\alpha_s + \alpha_t). \end{equation}

Checking the loop at $B_t$, we get the analogous equation
\begin{equation} \label{eq:foo12b} g_s - \barg_t = (\kappa_{ts} - \bark_{tt}) (\alpha_s + \alpha_t) = (\kappa_{ss} - \bark_{st}) (\alpha_s + \alpha_t). \end{equation}

Now we check the pair of edges between $M$ and $M'$. We have
\begin{equation} \psi d(\phi) \psi = {{\color{olive}
\labellist
\small\hair 2pt
 \pinlabel {$A$} [ ] at 32 95
 \pinlabel {$B$} [ ] at 31 52
 \pinlabel {$C$} [ ] at 48 97
 \pinlabel {$D$} [ ] at 47 50
 \pinlabel {$f$} [ ] at 56 75
\endlabellist
\centering
\ig{1}{triple6}
}}.\end{equation}
The $A$ term and the $B$ term vanish. We encourage the reader to confirm that the $C$ term, the $D$ term, and the $f$ term are linearly independent\footnote{After all, $M$ is supposed to be isomorphic to the indecomposable Soergel bimodule $B_{sts}$. By the Soergel hom formula $\End^2(B_{sts})$ is spanned by: linear polynomials times the identity (the $f$ term), a degree-$2$ map factoring through $B_{st}$ (the $D$ term), and a degree $2$ map factoring through $B_{ts}$ (the $C$ term).}! Thus
\begin{equation} \psi d(\phi) \psi = 0 \iff C = D = f = 0. \end{equation}
Using \eqref{eq:CDCprimeDprime}, this will imply that $C' = D' = 0$ as well (and also $f'=0$).

Note that $f = -g_t - C \alpha_t + st(g_s)$ by \eqref{blah7}, so if $f = C = 0$ then
\begin{equation} g_t = st(g_s). \end{equation}
If $g_s = 0$ then $g_t = 0$ and vice versa. Recall that Proposition \ref{prop:onecolorgood} left us with two cases for each color. Either $g_s = \barg_s = 0$ or $g_s \ne \barg_s = s(g_s) \ne 0$, and similarly for $t$. We can now observe that we are in the same case for both $s$ and $t$: either $g_s = g_t = 0$, or $g_s \ne 0 \ne g_t$. In the case where $g_s = g_t = 0$ one deduces that the differential is also zero on both 6-valent vertices, so it is zero on any diagram with colors in $\{s,t\}$. This is a valid solution, but one which makes the computations trivial.

Let us restrict our attention to the other case. Thus we assume that
\begin{equation} \label{eq:assumesofar} \barg_s = s(g_s), \quad \barg_t = t(g_t), \quad g_t = st(g_s). \end{equation}
We leave it as an exercise to verify that \eqref{eq:assumesofar} implies that $C = D = f = f' = 0$ as well as \eqref{eq:foo12a} and \eqref{eq:foo12b}. Thus \eqref{eq:assumesofar} is the only assumption we need. The reader attempting this exercise will be helped by the formula
\begin{equation} \label{eq:funnybraid} st \pa_s(h) = \pa_t(st(h)) \end{equation}
for any polynomial $h$.

Recall also the scalar $\kappa_s = \pa_s(g_s)$ which we used in a previous chapter. Then $g_t = st(g_s)$ and \eqref{eq:funnybraid} imply that
\begin{equation} \kappa_s = \pa_s(g_s) = \pa_t(g_t) = \kappa_t. \end{equation}
We write 
\begin{equation} \label{eq:truekappadef} \kappa := \kappa_s = \kappa_t. \end{equation}

\begin{rem} If $g_t = st(g_s)$ then $\barg_t = tst(g_s)$. So it makes sense that $g_s - \barg_t = g_s - tst(g_s)$ is proportional to $\alpha_s + \alpha_t$, since $tst$ is a reflection and $\alpha_s + \alpha_t$ is its root. \end{rem}

Now we analyze the two remaining pairs of edges, under the requirements that the loops vanished above. We have
\begin{equation} d(p_t) \phi = {{\color{olive}
\labellist
\small\hair 2pt
 \pinlabel {$*$} [ ] at 40 52
\endlabellist
\centering
\ig{1}{pitchforkontop}
}} \end{equation}
where $*$ is the polynomial $g_s - \barg_t = g_s - tst(g_s)$. This polynomial can be forced out, and only the term which breaks the pitchfork will survive, so 
\begin{equation} d(p_t) \phi = 0 \iff \pa_t(g_s - \barg_t) = 0. \end{equation}
However, we have already seen that $g_s - \barg_t$ is proportional to $\alpha_s + \alpha_t$, which is not $t$-invariant. Thus $\pa_t(g_s - \barg_t) = 0$ if and only if $g_s - \barg_t = 0$! Consequently,
\begin{equation} \label{maybethis} d(p_t) \phi = 0 \iff g_s = \barg_t= sts(g_s). \end{equation}
Meanwhile,
\begin{equation} d(\psi) i_t = { {\color{olive}
\labellist
\small\hair 2pt
 \pinlabel {$A'$} [ ] at 31 60
 \pinlabel {$B'$} [ ] at 32 29
\endlabellist
\centering
\ig{1}{pitchforkonbot}
}} \end{equation}
Only the $B'$ term will contribute, and it will contribute $\pa_t(B' \alpha_s) = - B'$. Recall that $B' = - \pa_s(\barg_t)$. Using \eqref{eq:funnybraid}, we can also observe that
\begin{equation} - \pa_s(\barg_t) = \pa_t(g_s). \end{equation} Then
\begin{equation} \label{maybethat} d(\psi) i_t = 0 \iff B' = 0 \iff \pa_s(\barg_t) = 0 \iff \pa_t(g_s) = 0. \end{equation}

Only one of these two conditions \eqref{maybethis} and \eqref{maybethat} need hold, so either $g_s = \barg_t = sts(g_s)$ or $B' = 0 = \pa_s(\barg_t) = \pa_t(g_s)$, but not necessarily both. That is, $g_s$ is either fixed by the reflection $t$ or by the reflection $sts$. If both hold, then $g_s = 0$.

Similarly, either $d(p_s) \psi = 0$ or $d(\phi) i_s = 0$, from which we deduce that either $g_t = \barg_s = sts(g_t)$ or $B = 0 = \pa_t(\barg_s) = \pa_s(g_t)$. These two conditions are equivalent to the conditions above, but in the reverse order: if $g_s$ is fixed by $sts$ then $g_t = st(g_s)$ is fixed by $s$, and if $g_s$ is fixed by $t$ then $g_t$ is fixed by $sts$.

Let us consider one possibility, where $B = 0$, so that $g_t$ is fixed by $s$ and $g_s$ is fixed by $sts$. In this case both $A = B = 0$, so that the differential kills $\phi$! We leave the reader to determine that $B' = - A'  = \pa_t(g_s) \ne 0$. Moreover, 
\begin{equation} \pa_t(g_s) = \pa_t(tst(g_s)) = - \pa_t(st(g_s)) = - \pa_t(g_t) = - \kappa. \end{equation} Thus, $A' = \kappa$ and $B' = - \kappa$ and our differential satisfies
\begin{equation} \label{eq:possibility1} 
d\left(~\sixvalent~\right) = 0, \quad d\left(~\othersixvalentlabel{}{}{}{}{}{}{}{}~\right) = \othersixvalentlabel{\kappa}{}{}{-\kappa}{}{}{}{}. \end{equation}
	
Analogously, the other possibility, where $B' = 0$, also satisfies $A' = 0$ and $B = -A = \pa_s(g_t)$, and
\begin{equation} \pa_s(g_t) = \pa_s(st(g_s)) = \pa_s(s(g_s)) = - \kappa. \end{equation} so that $A = \kappa$ and $B = -\kappa$ and
\begin{equation} \label{eq:possibility2} 
d\left(~\ig{.5}{other6valent}~\right) = 0, \quad d\left(~\sixvalentlabel{}{}{}{}{}{}{}{}~\right) = \sixvalentlabel{\kappa}{}{}{-\kappa}{}{}{}{}. \end{equation}
These are our two options.

Let us summarize.

\begin{prop} \label{prop:adjcolorgood} The graph from Proposition \ref{prop:mixedFc} has no loops or cycles if and only if one of the following three possibilities holds.
\begin{enumerate} \item $g_s = g_t = 0$, and the differential is zero on every diagram with colors in $\{s,t\}$.
	\item The equations \eqref{eq:assumesofar} hold, and $g_t$ is fixed by $s$, and $g_s = t(g_t)$ is fixed by $sts$. Then the differential on 6-valent vertices is given by \eqref{eq:possibility1}, where $\kappa = \pa_s(g_s) = \pa_t(g_t)$.
	\item The equations \eqref{eq:assumesofar} hold, and $g_s$ is fixed by $t$, and $g_t = s(g_s)$ is fixed by $sts$. Then the differential on 6-valent vertices is given by \eqref{eq:possibility2}, where $\kappa = \pa_s(g_s) = \pa_t(g_t)$.
\end{enumerate}
In other words, any good differential must satisfy one of these three possibilities.
\end{prop}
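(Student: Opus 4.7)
The plan is to assemble the edge-by-edge analysis already carried out in the paragraphs preceding the proposition into a clean case distinction, and then to verify the converse by direct substitution. I organize the graph from Proposition \ref{prop:mixedFc} into three groups of edges: the two loops, the pair between $M$ and $M'$, and the two outer pairs of directed edges.

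First I would show that the loops $d(p_s)i_s$ and $d(p_t)i_t$ vanish iff equations \eqref{eq:foo12a} and \eqref{eq:foo12b} hold; the pair $\psi d(\phi)\psi$, $\phi d(\psi)\phi$ between $M$ and $M'$ vanishes iff $C=D=f=0$ (using linear independence of the three diagrams in the expansion). From the formulas for $A,B,C,D,f$ and $f'$ in terms of $g_s,g_t,\barg_s,\barg_t$ I would verify that this batch of vanishing conditions is equivalent to the conjunction
\[ \barg_s = s(g_s), \quad \barg_t = t(g_t), \quad g_t = st(g_s), \]
i.e.\ to \eqref{eq:assumesofar}. The key identity $st\,\pa_s(h) = \pa_t(st(h))$ from \eqref{eq:funnybraid} is what makes this reduction go through: it collapses the apparently separate constraints on $C,D,f,f'$ and on \eqref{eq:foo12a}--\eqref{eq:foo12b} into a single trio of equations.

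Next I would split into cases according to whether $g_s = 0$. If $g_s = 0$ then \eqref{eq:assumesofar} forces $g_t = \barg_s = \barg_t = 0$, and a quick check shows every generating diagram in colors $\{s,t\}$ is killed: this is case (1). Assume now $g_s \ne 0$. The remaining four outer edges come in two ``either/or'' pairs: $d(p_t)\phi = 0$ iff $g_s = sts(g_s)$, while $d(\psi)i_t = 0$ iff $\pa_t(g_s) = 0$, and analogously on the other side. I would argue that $g_s$ cannot be fixed by both $t$ and $sts$ (else it is invariant under $\langle s,t\rangle$ and then $\pa_s(g_s)=0$ contradicts Proposition \ref{prop:onecolorgood}), so exactly one alternative holds on each side. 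The identity $g_t = st(g_s)$ pairs these alternatives: $g_s$ fixed by $sts$ $\iff$ $g_t$ fixed by $s$, and $g_s$ fixed by $t$ $\iff$ $g_t$ fixed by $sts$. This yields cases (2) and (3) respectively, and ensures the two halves of the graph cannot decouple.

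Finally, the converse: I would substitute back to confirm that each case actually produces a graph with no loops or cycles, and along the way derive the explicit formulas \eqref{eq:possibility1} and \eqref{eq:possibility2} for the differentials of the 6-valent vertices. In case (2), $g_t \in R^s$ forces $A = -\pa_s(g_t) = 0$ and $B = -\pa_t(\barg_s) = 0$, while $\pa_t(g_s) = \pa_t(tst(g_s)) = -\pa_t(g_t) = -\kappa$ gives $A' = \kappa$, $B' = -\kappa$; case (3) is symmetric. The main obstacle is really bookkeeping rather than insight: keeping track of which of the many scalars $\kappa_{\bullet\bullet}, \bark_{\bullet\bullet}$ is forced to vanish in each case, and using \eqref{kprops} and \eqref{eq:funnybraid} consistently so that the implications go in both directions. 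Everything else is a direct, if tedious, substitution.
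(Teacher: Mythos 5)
Your proposal is correct and follows essentially the same route as the paper: the same decomposition of the graph into loops, the $M$--$M'$ pair, and the two outer either/or pairs, the same reduction to \eqref{eq:assumesofar} via the vanishing of $C,D,f$ and the loop conditions, the same use of \eqref{eq:funnybraid} and Proposition \ref{prop:onecolorgood} to exclude $g_s$ being fixed by both $t$ and $sts$, and the same back-substitution yielding \eqref{eq:possibility1} and \eqref{eq:possibility2}. The only (harmless) deviation is that you extract $\barg_s = s(g_s)$ and $\barg_t = t(g_t)$ directly from the graph conditions, whereas the paper imports them from the one-color analysis.
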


\begin{example} For the standard differential on $\Bbbk[x_1, \ldots, x_n]$, we deduced in \S\ref{subsec-implications1} that either $g_i = x_i$ or $g_i = x_{i+1}$. Ere now it seemed
we could make this choice for each $i$ independently. However, Proposition \ref{prop:adjcolorgood} forces the choices of $g_i$ and the choice of $g_{i+1}$ to be related to each
other, since $g_{i+1} = s_i s_{i+1} g_i$. If $g_i = x_i$ for some $i$ then $g_{i+1} = x_{i+1}$, and consequently $g_j = x_j$ for all $j$. If $g_i = x_{i+1}$ then $g_{i+1} = x_{i+2}$,
and consequently $g_j = x_{j+1}$ for all $j$. \end{example}

%============
\subsection{Implications in simply laced types} \label{subsec-orientations}
%============

Let $(W,S)$ be an irreducible Coxeter group with $m_{st} \in \{2,3\}$ for all $s, t \in S$.

If $g_s = 0$ for any $s \in S$, then by Proposition \ref{prop:adjcolorgood} we must have $g_t = 0$ for all $t$ in the same connected component of the Coxeter graph as $s$. This is the
boring case.

Otherwise, Proposition \ref{prop:adjcolorgood} gives one a dichotomy for each pair $s, t \in S$ with $m_{st} = 3$. We will encode this with an orientation on the edges in the Coxeter
graph: the edge points from $s$ to $t$ if $g_s$ is fixed by $t$.

Now suppose that $s, t, u \in S$ generate a copy of $A_3$ inside $W$, with $m_{su} = 2$. Suppose that the $s-t$ edge is oriented from $s$ to $t$. Then $g_s$ is fixed by $t$. We also know that $g_s$ is
fixed by $u$, thanks to Proposition \ref{prop:distantcolorgood}. Now, $g_t = st(g_s) = s(g_s)$, so that \[u(g_t) = us(g_s) = su(g_s) = s(g_s) = g_t.\] Consequently $g_t$ is fixed by
$u$, and the $t-u$ edge is oriented from $t$ to $u$. Similarly, we leave the reader to deduce that when the $s-t$ edge is oriented from $t$ to $s$, the $t-u$ edge must be oriented from
$u$ to $t$. In particular, this implies that every copy of $A_3$ must be consistently oriented: $t$ is never a source or a sink. This fact is sad because of the following proposition.

\begin{prop} There is no good differential in any simply laced type outside of finite or affine type $A$. \end{prop}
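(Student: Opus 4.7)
The plan is to invoke the orientation framework developed in \S\ref{subsec-orientations}. Whenever $g_s \neq 0$ for some $s$, a good differential orients every edge of the Coxeter graph, and consistency forces, in each parabolic of type $A_3$, the middle vertex to be neither a source nor a sink. The proposition then reduces to the purely combinatorial statement that no connected simply-laced Coxeter diagram outside of types $A_n$ and $\tilde{A}_n$ admits such a consistent orientation. For this it suffices to exhibit, in any such diagram, a parabolic subgroup of type $D_4$ and to show that $D_4$ alone admits no consistent orientation.

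For the $D_4$ obstruction, label the central vertex $c$ and its three leaves $\ell_1, \ell_2, \ell_3$. The leaves are pairwise non-adjacent, so each triple $\{\ell_i, c, \ell_j\}$ is an $A_3$-parabolic with $c$ in the middle, and consistency requires the edges $c$--$\ell_i$ and $c$--$\ell_j$ to have opposite orientations at $c$ for every pair. But $c$ has three incident edges, each either incoming or outgoing, so by the pigeonhole principle at least two of them share an orientation at $c$, forcing $c$ to be a source or a sink in the corresponding $A_3$-parabolic. This rules out $D_4$.

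To reduce to this case, note that a connected simple graph in which every vertex has degree at most two is a path or a cycle; in the simply-laced Coxeter setting these are precisely types $A_n$, $A_\infty$, and $\tilde{A}_n$. Otherwise some vertex $v$ has degree $\geq 3$, and in each finite or affine simply-laced Dynkin diagram outside of type $A$ (that is, $D_n$, $E_6$, $E_7$, $E_8$, $\tilde{D}_n$, $\tilde{E}_6$, $\tilde{E}_7$, $\tilde{E}_8$) one can choose three of its neighbors that are pairwise non-adjacent, producing an induced $D_4$ parabolic. Any consistent global orientation restricts to a consistent orientation on this parabolic, contradicting the previous step.

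The expected main obstacle is the last step: verifying that a branch vertex of degree $\geq 3$ always hosts three pairwise non-adjacent neighbors, so that one obtains an honest $D_4$ parabolic (rather than, say, a $K_4$-like neighborhood that would fail to yield any $A_3$-parabolic through $v$ in the first place). For the classical simply-laced Dynkin and affine Dynkin diagrams this is immediate by inspection, and these exhaust the irreducible finite and affine simply-laced types, which is exactly the context of the proposition.
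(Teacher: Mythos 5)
Your proposal is correct and follows essentially the same route as the paper, whose entire proof is that $D_4$ admits no consistent orientation and that a $D_4$ parabolic sits inside every connected simply-laced Coxeter graph outside finite and affine type $A$; your pigeonhole argument at the branch vertex and the degree-at-most-two reduction simply make both steps explicit. The caveat you raise at the end (triangle-heavy neighborhoods that yield no honest $D_4$ parabolic) is a genuine looseness, but it is present in the paper's own one-line reduction as well, so you are not losing anything relative to the published argument.
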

	
\begin{proof} Outside of finite and affine type $A$, the Coxeter graph must have a subgraph of type $D_4$. There is no way to orient $D_4$ such that every copy of $A_3$ inside is consistent. \end{proof}

%============
\subsection{Implications in type $A$} 
%============

We continue to let $g_i$ denote $g_{s_i}$ when $s_i = (i,i+1)$, for $1 \le i \le n-1$. As just noted in \S\ref{subsec-orientations}, any nonzero good differential must induce a
consistent orientation on the Dynkin diagram.

Let us begin by quickly discussing the case where the realization is the standard one, with polynomial ring $\Bbbk[x_1, \ldots, x_n]$, and the differential is assumed to be the standard one, with
$d(x_i) = x_i^2$. We have proven that there are only two nonzero good differentials possible which extend the standard differential on the polynomial ring, the one where $g_i = x_i$ for all
$i$, and the one where $g_i = x_{i+1}$ for all $i$. Technically we still need to check the three-color relations, but this is done in the next chapter. We refer to the differential where
$g_i = x_i$ for all $i$ as having the \emph{standard orientation}, and the other differential as having the \emph{reverse orientation}. In Manin-Schechtman theory, braid relations $s_i
s_{i+1} s_i \to s_{i+1} s_i s_{i+1}$ can be given the lexicographic orientation, and the set of all reduced expressions for a given element becomes a semioriented graph (the commuting
braid relations $s_i s_j = s_j s_i$ have no orientation). This graph has a unique source and a unique sink, up to commuting braid relations. See \cite{EThick} for more details. For the
standardly oriented good differential, $d$ will kill any 6-valent vertex corresponding to a reverse-oriented braid relation. Vice versa, for the reverse oriented good differential,
6-valent vertices corresponding to standardly oriented braid moves are killed. Let us refer to the $p$-dg category associated to the standard orientation as the \emph{standard $p$-dg diagrammatic Hecke category}.

Now we work with an arbitrary realization and its polynomial ring $R$, but under the assumption that its diagrammatic Hecke category possesses a good differential. For ease of discussion
we will assume the Dynkin diagram gets the standard orientation; the other case can be handled similarly, or by duality. We will prove that our $p$-dg category is isomorphic to the
standard $p$-dg diagrammatic Hecke category, perhaps after extension of the realization.

We know that $g_1 \in R^{s_2}$. We also know that $g_1 \in R^{s_j}$ for all $j \ge 3$, by Proposition \ref{prop:distantcolorgood}. Thus $g_1$ is invariant in all simple reflections but
the first, so it is invariant in the parabolic subgroup $S_1 \times S_{n-1}$. In other words, $g_1$ has the same stabilizer in the symmetric group as does $x_1$ in the standard
polynomial ring. Similarly, $g_i$ has the same stabilizer as $x_i$ for all $1 \le i \le n-1$, which one can check in the same way, or can confirm since $g_i = s_{i-1} s_{i-2} \cdots s_2
s_1(g_1)$. Let us denote $s_{n-1}(g_{n-1})$ by $g_n$; it has the same stabilizer as $x_n$.

Let $\kappa = \pa_1(g_1)$, which is also equal to $\pa_i(g_i)$ for all $i$ by \eqref{eq:funnybraid}. Note that, by the usual definition of Demazure operators, for all $1 \le i \le n-1$
one has \begin{equation} g_i - g_{i+1} = g_i - s_i(g_i) = \kappa \alpha_i. \end{equation} In particular, the span of the $g_i$ contains all the simple roots, and has dimension at least
$n-1$. It is also preserved by the differential, since $d(g_i) = g_i^2$. Thus we may as well restrict our realization and our attention to the span of $\{g_i\}_{i=1}^n$.

\begin{prop} There is a ring homomorphism from $\Bbbk[x_1, \ldots, x_n]$ to the subring of $R$ generated by $g_1, \ldots, g_n$, sending $x_i \mapsto g_i$. This homomorphism is
$S_n$-equivariant and intertwines the standard differential with the differential on $R$. In fact, it is an isomorphism so long as the characteristic of $\Bbbk$ is not $2$. If the characteristic is $2$, it is either an isomorphism or is the quotient map by the ideal generated by $e_1 = x_1 + \ldots + x_n$. \end{prop}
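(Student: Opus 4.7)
The plan is to define $\phi \colon \Bbbk[x_1, \ldots, x_n] \to R$ by $x_i \mapsto g_i$, extended to a $\Bbbk$-algebra homomorphism, and then to analyze its kernel as a prime, $S_n$-invariant, $d$-stable ideal. The universal property of the polynomial ring makes $\phi$ well-defined. For $S_n$-equivariance I would check that $\{g_1, \ldots, g_n\}$ is a single $S_n$-orbit with the same stabilizers as $\{x_1, \ldots, x_n\}$: this follows from $s_i(g_i) = g_{i+1}$ (the orientation case $g_t = s(g_s)$), the distant-color invariance $g_j \in R^{s_i}$ for $|i-j| \ge 2$ (Proposition~\ref{prop:distantcolorgood}), and the adjacent invariance $g_i \in R^{s_{i+1}}$ (Proposition~\ref{prop:adjcolorgood}). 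Intertwining of differentials reduces by Leibniz to $d(\phi(x_i)) = d(g_i) = g_i^2 = \phi(x_i^2) = \phi(d(x_i))$. Surjectivity onto the subring generated by the $g_i$ is immediate.

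Let $I = \ker \phi$. Since $R$ is a polynomial ring and hence an integral domain, $I$ is prime; it is also $S_n$-invariant and $d$-stable. Its degree-one component $I_1$ is an $S_n$-subrepresentation of the permutation representation $\Bbbk^n$, whose only subrepresentations are $0$, the diagonal line $D = \Bbbk \cdot e_1$, the augmentation $A = \{v \colon \sum v_i = 0\}$, and all of $\Bbbk^n$. The case $A \subseteq I_1$ forces $g_i = g_j$, contradicting $g_i - g_{i+1} = \kappa \alpha_i \ne 0$; the case $I_1 = \Bbbk^n$ forces all $g_i = 0$, contradicting nontriviality. If $I_1 = 0$, then $g_1, \ldots, g_n$ are linearly independent linear forms, hence algebraically independent, so $I = 0$ and $\phi$ is an isomorphism onto its image.

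The crux is to rule out the remaining case $I_1 = D$, i.e.\ $\sum g_i = 0$ in $R$, whenever $\mathrm{char}\,\Bbbk \ne 2$. Applying $d$ immediately yields $\sum g_i^2 = 0$. Using $g_k = g_1 - \kappa(\alpha_1 + \cdots + \alpha_{k-1})$ and summing, the relation $\sum g_i = 0$ is equivalent to $n\,g_1 = \kappa \sum_{j=1}^{n-1}(n-j)\alpha_j$. If $\mathrm{char}\,\Bbbk$ divides $n$, this degenerates into a linear dependence among the simple roots (using $\kappa \ne 0$), which is forbidden by the realization axioms. Otherwise, a small determinant computation (the matrix expressing the $g_k$ in terms of the $\alpha_j$ has determinant $\pm 1/n$) shows that $g_1, \ldots, g_{n-1}$ are linearly independent; substituting $g_n = -(g_1 + \cdots + g_{n-1})$ then gives
\begin{equation*}
\sum_i g_i^2 \;=\; 2 \Bigl( \sum_{i < n} g_i^2 + \sum_{i < j < n} g_i g_j \Bigr),
\end{equation*}
which in characteristic $\ne 2$ is a nonzero polynomial in the algebraically independent forms $g_1, \ldots, g_{n-1}$ and so cannot vanish in $R$. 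This contradicts $\sum g_i^2 = 0$, eliminating $I_1 = D$ when $\mathrm{char}\,\Bbbk \ne 2$.

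In characteristic $2$ the identity $\sum g_i^2 = (\sum g_i)^2$ is automatic, so the above obstruction disappears. One checks directly that $(e_1)$ is $d$-stable, since $d(e_1) = \sum x_i^2 = e_1^2$, so $(e_1) \subseteq I$ is consistent. Whenever $I \supseteq (e_1)$, the same determinant argument (applicable when $n$ is odd; $n$ even being already ruled out by the root-dependence obstruction even in characteristic $2$) shows that $\bar g_1, \ldots, \bar g_{n-1}$ are linearly, hence algebraically, independent in $R$, so the induced map $\Bbbk[x_1, \ldots, x_n]/(e_1) \to R$ is injective and $I$ equals $(e_1)$ exactly. The main obstacle in the whole argument is to arrange the characteristic $\ne 2$ contradiction so that it stems from a single quadratic identity, rather than an iterative chase through the elementary symmetric polynomials $e_k(g)$ (which stalls at $e_p$ in characteristic $p \le n$); the key insight is that $\sum g_i^2$ is a visibly nonzero polynomial as soon as the $g_i$ are known to be linearly independent, and that linear independence of the $g_k$ reduces to the nonvanishing of an explicit Vandermonde-like determinant $\pm 1/n$.
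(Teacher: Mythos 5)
Your overall strategy is the paper's: show that the only possible linear relation among the $g_i$ is $\sum g_i = 0$, and then use a degree-two computation (your $\sum g_i^2 = 0$ is literally the paper's $2e_2 \in \ker\phi$) to kill that case outside characteristic $2$. But there is a genuine gap in how you dispatch the case where $\mathrm{char}\,\Bbbk$ divides $n$. You claim that the resulting linear dependence $\kappa\sum_j (n-j)\alpha_j = 0$ among the simple roots ``is forbidden by the realization axioms.'' It is not. The only standing hypotheses on the realization are Demazure surjectivity and $(\star)$, and neither forces the simple roots to be linearly independent. Linear independence of the $\alpha_j$ follows (by applying the operators $\pa_{s_i}$ to a putative relation) only when the type $A_{n-1}$ Cartan matrix is invertible over $\Bbbk$, i.e.\ exactly when $\mathrm{char}\,\Bbbk \nmid n$; when $\mathrm{char}\,\Bbbk \mid n$ the vector $(1,2,\ldots,n-1)$ lies in its kernel, and realizations with $\sum_j j\alpha_j = 0$ exist (for instance, take the quotient of the permutation representation by the line spanned by $(1,\ldots,1)$; it still satisfies Demazure surjectivity and $(\star)$). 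So for odd primes dividing $n$ your argument does not rule out $I_1 = D$, and your parenthetical claim that $n$ even is impossible in characteristic $2$ rests on the same false premise (and would prove more than the proposition asserts). Your determinant computation has the same blind spot: it expresses the $g_k$ in terms of the $\alpha_j$, so it yields linear independence of $g_1,\ldots,g_{n-1}$ only when the $\alpha_j$ are themselves independent.

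The repair is short and removes the case split on $\mathrm{char}\,\Bbbk \mid n$ entirely: once you know $I_1 = \Bbbk\cdot e_1$, any linear relation among $g_1,\ldots,g_{n-1}$ alone would be an element of $I_1$ whose $x_n$-coefficient vanishes, hence zero. So $g_1,\ldots,g_{n-1}$ are linearly, hence algebraically, independent with no hypothesis on $n$, and your identity $\sum_i g_i^2 = 2\bigl(\sum_{i<n} g_i^2 + \sum_{i<j<n} g_ig_j\bigr) \ne 0$ finishes characteristic $\ne 2$ exactly as written; the same independence handles the characteristic $2$ case. This is in effect what the paper does: it shows directly that any relation $\sum a_ig_i=0$ has all $a_i$ equal, deduces that $\ker\phi$ is either $0$ or exactly $(e_1)$, and then uses $d(e_1) = e_1^2 - 2e_2$ together with $e_2\notin(e_1)$. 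With that substitution your proof is correct and is essentially the paper's argument, with the subrepresentation classification of $I_1$ replacing the paper's elementary manipulation of the coefficients $a_i$.
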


\begin{proof} The first two sentences are straightforward, so it remains to show that the homomorphism is an isomorphism. Let $I$ be the kernel of the map. If $g_1, \ldots, g_n$
is linearly independent, then they must be algebraically independent (since they are linear terms inside a polynomial ring), and $I = 0$.

Suppose to the contrary that there were a linear dependence $\sum a_i g_i = 0$ for some scalars $a_i$. Let $y = \sum_{i \ge 3} a_i g_i$, so that $a_1 g_1 + a_2 g_2 + y = 0$.
Since $y$ is fixed by $s_1$, so must be $a_1 g_1 + a_2 g_2$. Since $g_2 = s_1(g_1) \ne g_1$, this implies that $a_1 = a_2$. By similar arguments, $a_2 = a_3$, etcetera, and all
the scalars $a_i$ are equal. So the only possible linear dependence relation is $a(g_1 + \cdots + g_n)$ for some nonzero scalar $a$. Since the linear polynomials form a free $\Bbbk$-module in any realization, a relation of the form $a(g_1 + \cdots + g_n) = 0$ for $a \ne 0$ implies that $g_1 + \cdots + g_n = 0$.

Thus if $I$ is nonzero then $I$ contains the symmetric polynomial $e_1 = x_1 + \ldots + x_n$. Since there is at most one linear relation, at least $n-1$ of the elements $g_i$ are
algebraically independent. If $I$ is any bigger than the ideal generated by $e_1$ then this contradicts the algebraic independence of $\{g_1, \ldots, g_{n-1}\}$.

Now $I$ is preserved by the differential, and $d(e_1) = \sum x_i^2 = e_1^2 - 2e_2$. Thus $2e_2 \in I$. Unless the characteristic of $\Bbbk$ is $2$, this gives the desired
contradiction. \end{proof}

\begin{rem} Similarly, $d(e_k) = e_1 e_k - (k+1) e_{k+1}$ for all $k$. Ignoring the case of small primes, any ideal preserved by $I$ which contains $e_1$ will also contain $e_k$ for all $1 \le k \le n$. Thus $\Bbbk[x_1, \ldots, x_n]/I$ is finite-dimensional, as a quotient of the coinvariant ring. But its image inside $R$ is infinite dimensional,
containing at least a polynomial ring with $n-1$ generators. This is a contradiction. \end{rem}

The conclusion is that our realization with good differential need not be the standard one with the standard differential, but it (or the relevant part of it, the subring
generated by the $g_i$) is equivariantly isomorphic to the standard realization with the standard differential.

\begin{example} There is an $S_3$-invariant isomorphism $\Bbbk[x_1, x_2, x_3] \to \Bbbk[y_1, y_2, y_3]$ sending $x_1 \mapsto y_2 + y_3$, $x_2 \mapsto y_1 + y_3$, and $x_3 \mapsto
y_1 + y_2$. This intertwines the differential, when \[d(y_i) = y_i^2 + y_i y_j + y_i y_k - y_j y_k\] for all $\{i,j,k\} = \{1,2,3\}$. So, even when the realization is the
standard one, we can not and should not rule out the possibility that $R$ is equipped with a non-standard differential, because it might be a non-standard differential isomorphic
to a standard differential. \end{example}

The above proposition showed that the underlying polynomial rings of the standard realization and our good realization are isomorphic (outside of the possible characteristic $2$
exception), but we still need to show that the associated Hecke categories are isomorphic categories (via an isomorphism which intertwines the differential). This is slightly
subtle, because the isomorphism $\Bbbk[x_1, \ldots, x_n] \to R$ sending $x_i \mapsto g_i$ need not send simple roots to simple roots! Instead, \begin{equation} \alpha_i = x_i -
x_{i+1} \mapsto g_i - g_{i+1} = \kappa \alpha_i, \end{equation} and roots are rescaled by the scalar $\kappa$.

\begin{prop} Assume that the Hecke category is equipped with a good differential $d$. The homomorphism $\Bbbk[x_1, \ldots, x_n] \to R$ sending $x_i \mapsto g_i$ lifts to an
functor between the standard $p$-dg Hecke category and the version equipped with $d$. This functor rescales each $s$-colored enddot by $\kappa$, and each $s$-colored startdot by
$1$. Thus $\alpha_s \mapsto \kappa \alpha_s$, as noted above. The merging trivalent vertex is rescaled by $1$, and the splitting trivalent vertex is rescaled by $\kappa^{-1}$.
The 4- and 6-valent vertices are rescaled by $1$. This functor is an isomorphism, except in the case of characteristic $2$, when it is either an isomorphism or the kernel is generated by the polynomial $e_1$.
\end{prop}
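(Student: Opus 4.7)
The functor $F$ is defined on objects by $B_s \mapsto B_s$ (extended monoidally), on the subring $\Bbbk[x_1, \ldots, x_n] = \End(\1)$ by the ring map $x_i \mapsto g_i$ from the previous proposition, and on generating morphisms by the prescribed scalars. I would first observe that all relevant scaling factors are consistent: since $F$ sends simple roots to $\kappa$ times simple roots, the barbell $\alpha_s = \finaldotblue \circ \startdotblue$ maps to $\kappa \alpha_s = F(\alpha_s)$, matching the startdot and enddot rescalings, and similarly the Frobenius unit relation is preserved because $\finaldotblue$ composed with $\splitblue$ rescales by $\kappa \cdot \kappa^{-1} = 1$.

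Next I would verify the remaining defining relations. The main subtlety is the polynomial forcing relation \eqref{eq:polyforce}: the broken strand $\brokenblue = \startdotblue \circ \finaldotblue$ rescales by $\kappa$ under $F$, while the Demazure operators satisfy $\pa_s^{\text{target}}(F(f)) = \kappa \cdot F(\pa_s^{\text{std}}(f))$, since $\alpha_s^{\text{std}} \pa_s^{\text{std}}(f) = f - s(f)$ and $F$ is $S_n$-equivariant while sending $\alpha_s^{\text{std}} \mapsto \kappa \alpha_s^{\text{target}}$. These two factors of $\kappa$ cancel, so polynomial forcing is preserved. The remaining relations (one-color associativity/coassociativity/needle, the two-color dot slide and trivalent slide through a crossing, cyclicity, the $A_1 \times A_1 \times A_1$ Zamolodchikov, the $A_2$ dot-6-valent and associativity relations, and the $A_3$ Zamolodchikov) are all homogeneous in such a way that the analogous bookkeeping of $\kappa$-factors cancels; since the generators involved and their rescalings are symmetric under horizontal flip and under the $s \leftrightarrow t$ swap, only a handful of independent weight-computations need to be done.

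I would then check that $F$ intertwines the differentials. On dots, trivalent vertices, and 4-valent vertices this follows directly from the formulas computed in \S\ref{subsec-dotspolys}, \S\ref{subsec-trivalent}, and \S\ref{subsec-fourvalent}, combined with the rescalings and the identity $F(x_i) = g_i = F(g_s^{\text{std}})$. For the 6-valent vertex, in the standard orientation we use the simplified formula $d(\sixvalent) = \kappa_{\text{std}}(\brokeXII - \brokeVI) = \brokeXII - \brokeVI$ on the source side (with $\kappa_{\text{std}} = 1$), and $d(\sixvalent) = \kappa(\brokeXII - \brokeVI)$ on the target side. Since the 6-valent rescales by $1$ and each broken 6-valent (being built from dots and trivalent vertices) rescales by a power of $\kappa$ that matches on both sides, the factor of $\kappa$ appears precisely as needed.

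Finally, to show $F$ is an isomorphism, I would invoke the double leaves basis of \cite{EWGr4sb}: morphism spaces between Bott-Samelson bimodules are free modules over the polynomial ring with this basis. The functor $F$ sends each double leaf in the source to an invertible scalar multiple of the corresponding double leaf in the target (since every rescaling constant is an invertible power of $\kappa$, and $\kappa \ne 0$ because $g_s \notin R^s$ forces $\pa_s(g_s) \ne 0$). Thus $F$ is fully faithful up to the underlying polynomial ring map, which by the previous proposition is an isomorphism except possibly in characteristic $2$, where its kernel is at most the ideal $(e_1)$. The main obstacle in the plan is the bookkeeping verification for the 6-valent vertex and the Zamolodchikov relations, but no new ideas should be needed beyond the scaling consistency already illustrated for polynomial forcing.
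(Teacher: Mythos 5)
Your proposal is correct, but it takes a genuinely different route from the paper. The paper's proof is essentially a citation: it invokes the classification (from \cite{EHBraidToolkit}) of all autoequivalences of the diagrammatic Hecke category that fix each object $B_s$ --- any such functor is determined by, and can be constructed from, an arbitrary choice of rescaling of the start- and end-dots together with a choice of polynomial ring map --- so well-definedness and invertibility of the functor come for free, and only the (routine) check that it intertwines the differentials remains. You instead verify everything by hand: you define the functor on generators, check each defining relation by tracking powers of $\kappa$ (the key observation being that the broken strand rescales by $\kappa$ while $\pa_s$ picks up a compensating $\kappa$ under the ring map $x_i \mapsto g_i$, so polynomial forcing is preserved), check the differential intertwining generator by generator, and deduce invertibility from the double leaves basis together with the previous proposition on the polynomial ring map. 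Your approach is self-contained and makes explicit exactly where the constant $\kappa$ enters, at the cost of a nontrivial amount of bookkeeping (in particular for the broken 6-valent vertices and the Zamolodchikov relations, which you assert but do not carry out); the paper's approach avoids all of this but leans on an external result. Two small points worth making explicit in your write-up: the functor only makes sense when $\kappa \ne 0$, i.e.\ you are implicitly in the case of a nonzero good differential (you do note $g_s \notin R^s$ forces $\kappa \ne 0$, which is the right justification); and the target of the polynomial ring map is the subring of $R$ generated by the $g_i$, so one should first restrict the realization to that subring, as the paper does in the preceding discussion.
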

	
\begin{proof} Let us assume that the map $\Bbbk[x_1, \ldots, x_n] \to R$ is an isomorphism (as it must be outside of characteristic $2$). In \cite{EHBraidToolkit} we classify all
autoequivalences of the Hecke category which fix the objects $B_s$ for each $s$. Any such automorphism is determined uniquely by how it rescales the start and end dots, and how
it affects the remainder of the polynomial ring (beyond the part spanned by roots), and these choices can be made arbitrarily. So, the functor defined above is an
autoequivalence. Checking that it intertwines the differential is straightforward. Modifying these results to account for the possible $e_1$ kernel in characteristic $2$ is straightforward. \end{proof}

%============
\subsection{Relations to thick calculus} 
%============

The thick calculus has a new object $B_{s,t}$ (drawn as purple) and several new morphisms: a trivalent vertex
\[ \ig{1}{thicktridown} \]
$B_{s,t} B_s \to B_{s,t}$ together with variants thereof; splitters $B_{s,t} \to B_s B_t B_s$ and $B_{s,t} \to B_t B_s B_t$, and mergers $B_s B_t B_s \to B_{s,t}$ and $B_t B_s B_t \to B_{s,t}$, pictured below.
\[ \ig{1}{thickadjsplit2} \qquad \ig{1}{thickadjsplit} \qquad \ig{1}{thickadjmerge2} \qquad \ig{1}{thickadjmerge} \] The composition of a splitter and a merger gives the $6$-valent vertex $B_s B_t B_s \to B_t B_s B_t$. The composition of a merger with a splitter gives the identity map of $B_{s,t}$.

Once again there is not very much new to say about differentials on the thick calculus. Here are the highlights.
\begin{itemize} \item Because all the direct summands of $B_s B_t B_s$ are actually objects in the thick diagrammatic category, one need not worry about the abstractions of \S\ref{subsec-mixedFc}, but can work directly with fantastic filtrations.
\item The differential on the trivalent vertex is forced to be the same as for an ordinary trivalent vertex (e.g. for the map $B_{s,t} B_s \to B_{s,t}$, put $-\barg_s$ in the middle on the bottom).
\item One can compute the general differential on the splitters and mergers. The computation is just as nasty and thorny as the one above for the general differential of the 6-valent vertices. Just as in \S\ref{ssec:thickdistant} there is one extra degree of freedom: the polynomial next to the splitter and the polynomial next to the merger can be arbitrary so long as they add up to the polynomial $f$ from \eqref{eq:dsix}. Also as in \S\ref{ssec:thickdistant}, this additional freedom disappears when restricting to good differentials. \end{itemize}

For posterity, here is the good differential on the thick calculus, when the orientation on the Dynkin diagram is $s \to t$.
\begin{subequations} \label{subeq:thickcalcdiff}
\begin{equation} 
d\left(~\ig{1.5}{thicktridown}~\right) = {
\labellist
\small\hair 2pt
 \pinlabel {$-\barg_s$} [ ] at 22 8
\endlabellist
\centering
\ig{1.5}{thicktridown}
} \qquad \qquad
d\left(~\ig{1.5}{thicktridownleft}~\right) = {
\labellist
\small\hair 2pt
 \pinlabel {$-\barg_s$} [ ] at 9 8
\endlabellist
\centering
\ig{1.5}{thicktridownleft}
} \end{equation}
\begin{equation} 
d\left(~\ig{1.5}{thicktriup}~\right) = {
\labellist
\small\hair 2pt
 \pinlabel {$-g_s$} [ ] at 22 22
\endlabellist
\centering
\ig{1.5}{thicktriup}
} \qquad \qquad
d\left(~\ig{1.5}{thicktriupleft}~\right) = {
\labellist
\small\hair 2pt
 \pinlabel {$-g_s$} [ ] at 9 22
\endlabellist
\centering
\ig{1.5}{thicktriupleft}
} \end{equation}

\begin{equation} 
d\left(~\ig{1.5}{thickadjsplit2}~\right) = 0 \qquad \qquad d\left(~\ig{1.5}{thickadjmerge}~\right) = 0 \end{equation}
\begin{equation} d\left(~\ig{1.5}{thickadjsplit}~\right) = \kappa \ig{1.5}{thickadjsplitbroke} \qquad \qquad d\left(~\ig{1.5}{thickadjmerge2}~\right) = -\kappa \ig{1.5}{thickadjmerge2broke} \end{equation}

\end{subequations}

%%%%%%%%%%%%%
\section{Three colors} \label{sec-three}
%%%%%%%%%%%%%

It remains to check the Zamolodchikov relations associated to finite rank 3 Coxeter subgroups. Since we are in simply laced type, there are only three possible rank 3 subgroups:
$A_1 \times A_1 \times A_1$, $A_1 \times A_2$, and $A_3$.

When the differential is good, the differential kills all $4$-valent vertices, and kills half of the 6-valent vertices. There is a version of each Zamolodchikov relation where the
differential kills both sides, and thus the relation is checked trivially! This is because the Zamolodchikov relations are equalities between two oriented paths in the reduced
expression graph. For the reverse-oriented good differential, all oriented paths are sent to zero by the differential. Rotating the relations by 180 degrees, we obtain an equivalent
relation which is an equality between reverse-oriented paths in the reduced expression graph, and these are sent to zero by the standardly oriented good differential.

\begin{rem} It is a good exercise for the reader learning diagrammatics to compute directly that the standardly oriented good differential preserves the oriented version of the
Zamolodchikov relation (the version that it does not just send to zero). \end{rem}

We have checked the $A_1 \times A_1 \times A_1$ and $A_1 \times A_2$ relations for an arbitrary differential, and they hold; the techniques required for this check have all been discussed above. We tried to check the $A_3$ relation for an arbitrary differential, but it was a surprisingly thorny computation, and we gave up.

%%%%%%%%%%%%%
\section{Does the differential have divided powers?} \label{sec-dp}
%%%%%%%%%%%%%
Yes, at least for the good differentials. We did not bother to check the general differential.

Consider the graded ring $T = \Z[x]$ with differential $d(x) = x^2$. Let $M$ be a free $T$-module of rank $1$, generated by the element $m$, and equip with with an edg-structure
(i.e. a degree $+2$ derivation) where $d_M(m) = ax \cdot m$ for some $a \in \Z$. We call this edg-module $M_a$. Then it is easy to compute that \begin{equation} d_M^{(k)} x^\ell
\cdot m = \binom{\ell+k+a-1}{k} x^{\ell + k} \cdot m \end{equation} for all $\ell \ge 0$. Thus the divided powers are defined over $\Z$.

\begin{rem} More generally, if $T = \Bbbk[x]$ for some field of characteristic $p$, and $M_a$ is defined as above for some $a \in \Bbbk$, then $d^p = 0$ on $M_a$ if and only if $a$ lives in the prime field $\F_p \subset \Bbbk$. \end{rem}

Now consider the diagrammatic Hecke category over $\Z$, and let $\theta$ be a generator. We ask whether $d^{(k)}(\theta)$ lives inside the $\Z$-form as well. Obviously this holds
if $d(\theta) = 0$, so we can consider only those differentials which do not kill $\theta$.

Suppose that $\theta$ is an $s$-colored enddot. Then $d(\theta) = g_s \theta$, and $d(g_s) = g_s^2$. Thus we can consider the sub-edg-algebra $T = \Z[g_s] \subset R$. The subset $T \cdot \theta$ is closed under the differential, and is isomorphic as an edg module over $T$ to $M_{1}$. In particular, $d^{(k)}(\theta)$ is well-defined over $\Z$. The same argument works for the $s$-colored startdot.

The analogous argument also works for the $s$-colored trivalent vertices, where $T$ acts by putting a polynomial in the appropriate region. This time $T \cdot \theta$ is isomorphic
to $M_{-1}$ instead.

Any good differential kills every 4-valent vertex, and one of the two 6-valent vertices. We need only check what happens to the other 6-valent vertex. We do the case when the orientation is $s \to t$.

Let $\alpha$, $\beta$, and $\gamma$ denote the following three diagrams.
\[ \alpha = \brokeXII, \qquad \beta = \brokeVI, \qquad \gamma = \brokeboth. \]

\begin{lem} The differential acts by the following formulae.
\begin{subequations}
\begin{equation} d(\alpha) = 2g_s \alpha - \kappa \gamma, \end{equation}
\begin{equation} d(\beta) = 2\barg_t \beta + \kappa \gamma, \end{equation}
\begin{equation} d(\gamma) = 2(g_s + \barg_t) \gamma. \end{equation}
\end{subequations}
Here, multiplication by a polynomial means putting that polynomial in the leftmost region (or rightmost region, it happens to be equal). 
\end{lem}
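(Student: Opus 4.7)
The plan is to factor each broken diagram through the unbroken $6$-valent vertex $\phi$ by inserting a single-strand barbell endomorphism $\mathrm{br}_c := \startdotblue \circ \finaldotblue$ on the appropriate middle strand, and then to apply the Leibniz rule. Concretely, one writes $\alpha = \mathrm{br}_s \circ \phi$ (an $s$-barbell on the middle top strand of $B_tB_sB_t$), $\beta = \phi \circ \mathrm{br}_t$ (a $t$-barbell on the middle bottom strand of $B_sB_tB_s$), and $\gamma = \mathrm{br}_s \circ \phi \circ \mathrm{br}_t$.

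Three ingredients feed the Leibniz expansion. First, $d(\mathrm{br}_c) = z_c \cdot \mathrm{br}_c$, which is immediate from the dot formulas \eqref{eq:denddot}, \eqref{eq:dstartdot} combined with $z_c = g_c + \barg_c$. Second, $d(\phi) = \kappa(\alpha - \beta)$ in the good case with orientation $s \to t$, which is \eqref{eq:possibility2}. Third, the identity $\mathrm{br}_c \circ \mathrm{br}_c = \alpha_c \cdot \mathrm{br}_c$, since two consecutive same-colored barbells on a single strand meet in the middle at a floating barbell equal to $\alpha_c$. This produces the simplifications $\mathrm{br}_s \circ \alpha = \alpha_s \alpha$, $\beta \circ \mathrm{br}_t = \alpha_t \beta$, $\mathrm{br}_s \circ \beta = \gamma = \alpha \circ \mathrm{br}_t$, and $\mathrm{br}_s \circ \alpha \circ \mathrm{br}_t = \alpha_s \gamma$.

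Combining these via Leibniz yields, for instance, $d(\alpha) = d(\mathrm{br}_s)\circ \phi + \mathrm{br}_s \circ d(\phi) = z_s \alpha + \kappa \alpha_s \alpha - \kappa\gamma = (z_s + \kappa\alpha_s)\alpha - \kappa\gamma$. The Demazure identity $\kappa\alpha_s = g_s - s(g_s) = g_s - \barg_s$, together with $z_s = g_s + \barg_s$, collapses the coefficient to $2g_s$, which is the first formula. The computations for $\beta$ and $\gamma$ are entirely analogous, invoking $\kappa\alpha_t = g_t - \barg_t$ in the $\beta$-case and both Demazure identities (plus the orientation consequence $\barg_s = g_t$) in the $\gamma$-case; this is where the precise form $2(g_s + \barg_t)$ for $d(\gamma)$ comes from.

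The main subtlety will be the bookkeeping of polynomial regions: each polynomial produced by Leibniz sits in a specific region of the planar diagram, and reaching the stated form requires sliding polynomials across strands using $z_c \in R^c$ and the compatibilities forced by the good-differential hypotheses. The assertion in the lemma that the leftmost and rightmost regions yield equal answers reflects the way the two breaks in $\gamma$ disconnect the central regions, and will need to be verified by a direct check on each diagram.
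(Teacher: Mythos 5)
Your argument is correct and is essentially the paper's own proof: both factor each broken vertex as a composite of $\phi$ with strand-breaking endomorphisms, apply Leibniz using $d(\phi)=\kappa(\alpha-\beta)$ and the double-break identity, and collapse $z_s+\kappa\alpha_s=2g_s$ (resp. $z_t-\kappa\alpha_t=2\bar{g}_t$) via the Demazure relation. The only cosmetic point is that your $\mathrm{br}_c$ is the broken strand rather than the barbell $\alpha_c=\finaldotblue\circ\startdotblue$, and the region bookkeeping you flag is exactly the paper's one-line remark that $g_s$ (resp. $\bar{g}_t$) is $t$-fixed (resp. $s$-fixed) and slides out.
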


\begin{proof} The proof is straightforward. Let us derive the first equality, as the others are similar. Applying the differential to $\alpha$ we get a sum of three terms:
\begin{itemize} \item The differential applied to the broken strand on top (the pair of dots). This places $z_s$ in the top region.
\item Breaking the top strand again, with a plus sign and a factor of $\kappa$. This contributes $\kappa \alpha_s$ to the top region.
\item Breaking the bottom strand, with a minus sign and a factor of $\kappa$. This contributes $-\kappa \gamma$.
\end{itemize}
Then one observes that $z_s + \kappa \alpha_s = (g_s + \barg_s) + (g_s - \barg_s) = 2g_s$.  Since $g_s$ is $t$-fixed, it slides across the $t$-colored strand to the leftmost or rightmost region.
\end{proof}

\begin{lem} When the orientation is $s \to t$, the differential applied iteratively to the 6-valent vertex 
$$\phi \co B_sB_tB_s \to B_t B_sB_t$$
 is equal to
\begin{equation} d^k(\phi) = k! \kappa (g_s^{k-1} \alpha - \barg_t^{k-1} \beta - 
	\kappa (\sum_{a+b = k-2} g_s^a \barg_t^b) \gamma) \end{equation}
for any $k \ge 1$. Consequently, $d^{(k)}$ is well-defined integrally. \end{lem}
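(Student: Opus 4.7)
The plan is to prove the closed formula by induction on $k$, using the previous lemma (which computes $d(\alpha)$, $d(\beta)$, $d(\gamma)$), and then read off integrality.

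For the base case $k=1$, recall from \eqref{eq:possibility2} that when the orientation is $s \to t$ we have $d(\phi) = \kappa \alpha - \kappa \beta$. The empty sum $\sum_{a+b=-1}$ is zero, so the proposed formula at $k=1$ reads $\kappa(\alpha - \beta)$, which matches.

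For the induction step, write $\Phi_k := g_s^{k-1}\alpha - \barg_t^{k-1}\beta - \kappa \sigma_k \gamma$ with $\sigma_k := \sum_{a+b=k-2} g_s^a \barg_t^b$, so that the claim is $d^k(\phi) = k!\,\kappa \Phi_k$. It suffices to check that $d(\Phi_k) = (k+1)\Phi_{k+1}$. Using the Leibniz rule, $d(g_s)=g_s^2$, and the fact that $d$ commutes with the Coxeter action (so $d(\barg_t) = d(t(g_t)) = t(g_t^2) = \barg_t^2$), one has $d(g_s^{k-1}) = (k-1)g_s^k$ and $d(\barg_t^{k-1}) = (k-1)\barg_t^k$. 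Combining with the previous lemma,
\begin{align*}
d(g_s^{k-1}\alpha) &= (k+1)g_s^k \alpha - \kappa g_s^{k-1}\gamma,\\
d(\barg_t^{k-1}\beta) &= (k+1)\barg_t^k \beta + \kappa \barg_t^{k-1}\gamma,\\
d(\kappa \sigma_k\gamma) &= \kappa\bigl(d(\sigma_k) + 2(g_s+\barg_t)\sigma_k\bigr)\gamma.
\end{align*}
The $\alpha$- and $\beta$-coefficients of $d(\Phi_k)$ are now manifestly $(k+1)g_s^k$ and $-(k+1)\barg_t^k$, as required. For the $\gamma$-coefficient, the main task is the bookkeeping: compute the coefficient of each monomial $g_s^c \barg_t^d$ with $c+d=k-1$ inside $g_s^{k-1} + \barg_t^{k-1} + d(\sigma_k) + 2(g_s+\barg_t)\sigma_k$. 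A direct count gives $(c-1)+(d-1)+2+2 = k+1$ in the interior case $c,d\ge 1$, while at each boundary ($c=0$ or $d=0$) the ``extra'' $g_s^{k-1}$ respectively $\barg_t^{k-1}$ term from $d(\alpha)$, $d(\beta)$ supplies the missing $+1$ to again yield $k+1$. Hence this coefficient equals $(k+1)\sigma_{k+1}$, finishing the induction.

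The divided-power assertion is then immediate: since $d^k(\phi) = k!\,\kappa\,\Phi_k$ and $\Phi_k$ is already an integral linear combination of the diagrams $\alpha$, $\beta$, $\gamma$ with polynomial coefficients in $\Z[g_s,\barg_t]$, the operator $d^{(k)} = d^k/k!$ lands in the $\Z$-form. Together with the analogous (easier) calculations for dots and trivalent vertices, which are recorded just before the lemma by comparison with the edg-module $M_a$, this shows that $d^{(k)}$ is defined over $\Z$ on all generators and hence, via the multi-Leibniz identity, on the whole category.

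The only genuinely delicate step is the combinatorial verification of the $\gamma$-coefficient, and in particular tracking how the $-\kappa\gamma$ and $+\kappa\gamma$ terms produced by $d(\alpha)$ and $d(\beta)$ conspire with the boundary behaviour of $\sigma_k$ to produce a uniform coefficient $k+1$; everything else is a mechanical Leibniz calculation.
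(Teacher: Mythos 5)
Your proof is correct and follows essentially the same route as the paper: induction on $k$ using the preceding lemma's formulas for $d(\alpha)$, $d(\beta)$, $d(\gamma)$, with the $\gamma$-coefficient bookkeeping being exactly the paper's ``helpful observation'' about $d\bigl(\sum_{a+b=m}x^ay^b\bigr)$, just carried out more explicitly. (Only a cosmetic slip: in the boundary case the roles are swapped --- $c=0$ corresponds to the monomial $\barg_t^{k-1}$ supplied by $d(\beta)$, and $d=0$ to $g_s^{k-1}$ from $d(\alpha)$ --- but the count $k+1$ is right either way.)
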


\begin{proof} We have $d(\phi) = \kappa(\alpha - \beta)$, and
\begin{equation} d^2(\phi) = \kappa(2 g_s \alpha - 2 \kappa \gamma  - 2 \barg_t \beta). \end{equation}
This proves the cases $k = 1, 2$. The inductive step is a simple exercise in the Leibniz rule. A helpful observation is that whenever $d(x) = x^2$ and $d(y) = y^2$ we have
\begin{equation} d(\sum_{a+b = m} x^a y^b) = m x^{m+1} + (m-1) xy \sum_{a'+b' = m-1} x^{a'} y^{b'} + m y^{m+1}. \end{equation} \end{proof}

Putting it all together, we have proven the following result.

\begin{thm} For any good differential $d$ on the diagrammatic Hecke category in simply-laced type, $d^{(k)}$ is defined integrally. \end{thm}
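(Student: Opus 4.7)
The plan is to reduce the verification to a check on each generator of the diagrammatic Hecke category. By the Leibniz rule for divided powers, $d^{(k)}(fg) = \sum_{i+j=k} d^{(i)}(f)d^{(j)}(g)$, so if every generator $\theta$ satisfies $d^{(k)}(\theta) \in \mathcal{H}_{\Z}$ for all $k$, integrality extends to every morphism built from generators. The generators split into polynomials, startdots and enddots, trivalent vertices, 4-valent vertices, and 6-valent vertices, and we handle each class separately.

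Three of these classes are either trivial or reduce to a known case. The 4-valent vertex and (for an orientation $s\to t$) the 6-valent vertex $B_tB_sB_t\to B_sB_tB_s$ are annihilated by $d$, so $d^{(k)}$ acts trivially on them. For polynomial generators, the proposition of Section 4.8 identifies the good differential on $R$ with the standard one on $\Bbbk[x_1,\ldots,x_n]$ (up to the possible $e_1$-kernel in characteristic $2$), and Example \ref{ex:standarddiffpoly} already exhibits the integral divided-power structure $d^{(k)}(x_i^\ell)=\binom{\ell+k-1}{k}x_i^{\ell+k}$.

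For dots and trivalent vertices, the strategy is to package the generator into a rank-one edg-module over $T:=\Z[g_s]$ (noting $d(g_s)=g_s^2$, so $T$ is itself a gaea). The enddot satisfies $d(\theta)=g_s\theta$, so $T\cdot\theta$ is isomorphic to the model edg-module $M_1$ described in the excerpt, whose divided powers $d^{(k)}$ are given by integer binomial coefficients. The startdot gives the same module (with $\bar g_s$ in place of $g_s$), and the two trivalent vertices give $M_{-1}$ in the analogous way. In each case the divided power is read off from the closed formula for $M_a$.

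The main obstacle is the surviving 6-valent vertex $\phi\colon B_sB_tB_s\to B_tB_sB_t$, where $d(\phi)=\kappa(\alpha-\beta)$ lies in a three-dimensional subspace spanned by the broken diagrams $\alpha$, $\beta$, $\gamma$. The plan is first to compute $d$ on the $R$-span of $\{\alpha,\beta,\gamma\}$, observing that $d(\alpha)=2g_s\alpha-\kappa\gamma$, $d(\beta)=2\bar g_t\beta+\kappa\gamma$, and $d(\gamma)=2(g_s+\bar g_t)\gamma$, so the action is block upper-triangular with polynomial eigenvalues that square under $d$. Then I would prove by induction on $k$ a closed-form expression
\begin{equation*}
d^k(\phi)=k!\,\kappa\Bigl(g_s^{k-1}\alpha-\bar g_t^{k-1}\beta-\kappa\sum_{a+b=k-2}g_s^a\bar g_t^b\,\gamma\Bigr),
\end{equation*}
so that $d^{(k)}(\phi)=d^k(\phi)/k!$ is manifestly an integral combination of generators. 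The inductive step is a Leibniz-rule bookkeeping exercise, streamlined by the auxiliary identity $d\bigl(\sum_{a+b=m}x^ay^b\bigr)=mx^{m+1}+(m-1)xy\sum_{a'+b'=m-1}x^{a'}y^{b'}+my^{m+1}$, valid whenever $d(x)=x^2$ and $d(y)=y^2$, which is exactly the situation for the pair $(g_s,\bar g_t)$.
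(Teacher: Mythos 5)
Your proposal is correct and follows essentially the same route as the paper: reduction to generators via the divided-power Leibniz rule, the rank-one edg-modules $M_{\pm 1}$ over $\Z[g_s]$ for dots and trivalent vertices, and the closed-form induction $d^k(\phi)=k!\,\kappa(g_s^{k-1}\alpha-\bar g_t^{k-1}\beta-\kappa\sum_{a+b=k-2}g_s^a\bar g_t^b\,\gamma)$ for the surviving 6-valent vertex, using the same auxiliary identity. The only (harmless) addition is your explicit treatment of the polynomial generators via the identification with the standard differential, which the paper leaves implicit.
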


%%%%%%%%%%%%%
\section{A surprising example with $n=8$} \label{sec:n8example}
%%%%%%%%%%%%%

Let us work in the diagrammatic Hecke category for $S_8$ associated to the standard realization with polynomial ring $R = \Z[x_1, \ldots, x_8]$. Equip it with the standard differential, where $g_{s_i} = x_i$ and $\barg_{s_i} = x_{i+1}$ and $\kappa = 1$. We also extend this differential to the thicker calculus which includes objects $B_{sts}$ for $m_{st} = 3$, using \eqref{subeq:thickcalcdiff}.

Let $X$ denote the Bott-Samelson bimodule associated with the sequence
\[ \underline{w} = (s_3, s_2, s_1, s_5, s_4, s_3, s_2, s_6, s_5, s_4, s_3, s_7, s_6, s_5). \]
Let $y = s_3 s_2 s_3 s_5 s_6 s_5$. To describe the indecomposable object $B_y$ we can most easily use thick calculus, since $B_y \cong B_{s_3 s_2 s_3} B_{s_5 s_6 s_5}$.

Here is are degree $0$ map $p \co X \to B_y$ and $i \co B_y \to X$ which span their respective degree $0$ Hom spaces.
\begin{equation} p = \ig{1}{n8proj} \qquad i = \ig{1}{n8inc} \end{equation}
To summarize these maps in words: the four (grayscale) strands with colors in $\{1,4,7\}$ get dotted off; the five (red and blue) strands with colors in $\{2,3\}$ get merged into $B_{s_3 s_2 s_3}$ (colored purple) with two thick trivalent vertices; the five (aqua and green) strands with colors in $\{5,6\}$ get merged into $B_{s_5 s_6 s_5}$ (colored teal) with two thick trivalent vertices. It is a very fun exercise to compute that
\begin{equation} pi = 2 \id_{B_y}. \end{equation}

A computation yields
\begin{equation} -d(p) = {{\color{olive}
\labellist
\small\hair 2pt
 \pinlabel {$x_6 - x_1$} [ ] at 54 19
 \pinlabel {$+1$} [ ] at 25 10
 \pinlabel {$+1$} [ ] at 99 6
\endlabellist
\centering
\ig{1.5}{n8proj}
}} \end{equation}
where again we use our abusive olive-colored sum notation and broken line notation from \S\ref{sec-abusivenotation}. To use words again, $-d(p)$ is the sum of three terms with the same underlying diagram as $p$: one which breaks the second strand colored $s_3$ with coefficient $+1$, one which breaks the last strand colored $s_6$ with coefficient $+1$, and one which places the polynomial $x_6 - x_1$ in the center. Another computation yields
\begin{equation} +d(i) = {{\color{olive}
\labellist
\small\hair 2pt
 \pinlabel {$x_8 - x_3$} [ ] at 54 34
 \pinlabel {$+1$} [ ] at 13 45
 \pinlabel {$+1$} [ ] at 86 40
\endlabellist
\centering
\ig{1.5}{n8inc}
}} \end{equation}
One can actually derive the second computation from the first, using various symmetries: rotation by 180 degrees, and the Dynkin diagram automorphism.

It is not immediately obvious that $d(p) \ne 0$, since it is not expressed as a linear combination of light leaves with polynomials on the right, but it is not too difficult to
justify. For example, the polynomial $x_1$ is fixed by $s_i$ for $i \ge 1$ and slides through the entire diagram; none of the other terms can possibly contribute a polynomial
involving $x_1$. Similarly, $d(i) \ne 0$ as can be seen from the $x_8$ term.

Finally, a computation yields
\begin{equation} d(p) i  = {{\color{olive}
\labellist
\small\hair 2pt
 \pinlabel {$x_1 + x_3 - x_6 - x_8$} [ ] at 40 28
 \pinlabel {$-1$} [ ] at 21 17
 \pinlabel {$-1$} [ ] at 61 17
\endlabellist
\centering
\ig{2}{n8dpi}
}}. \end{equation}
Note that the split-merge appearing on the rightmost part of this picture is equal to the identity of $B_{s_5 s_6 s_5}$, but if the $s_5$-colored strand is broken (with coefficient $-1$), it yields a nonzero degree $+2$ endomorphism of $B_{s_5 s_6 s_5}$. Similarly with the leftmost part of the picture and the identity of $B_{s_3 s_2 s_3}$. 

As a consequence of this computation, $B_y$ is not a summand of $X$ in any dg-filtration, for any prime! This example shows that the conjectural $d$-canonical basis does not agree with the $p$-canonical basis. See \S\ref{ssec:intron8} for further discussion.

\addcontentsline{toc}{section}{References}

% ====================================================================
% REFERENCES

\bibliographystyle{alpha}
\bibliography{mastercopy}

%
% ====================================================================

\vspace{0.1in}

\noindent B.~E.:  { \sl \small Department of Mathematics, University of Oregon, Eugene, OR 97403, USA}\newline \noindent {\tt \small email: belias@uoregon.edu}

\vspace{0.1in}

\noindent Y.~Q.: { \sl \small Department of Mathematics, University of Virginia, Charlottesville, VA 22901, USA} \newline \noindent {\tt \small email: yq2dw@virginia.edu}

% ==============================================================================
%
\end{document}